\newtheorem{thm}{Theorem}[section]
\newtheorem{prop}[thm]{Proposition}
\newtheorem{defn}[thm]{Definition}
\newtheorem{lemme}[thm]{Lemma}
\newtheorem{cor}[thm]{Corollary}
\newtheorem{remarque}[thm]{Remark}
\newtheorem{notation}[thm]{Notation}
\newtheorem{ex}[thm]{Example}
\newtheorem{rien}[thm]{}
\numberwithin{equation}{section}
\newcommand{\be}{\begin{enumerate}}
\newcommand{\ee}{\end{enumerate}}
\newcommand{\bi}{\begin{itemize}}
\newcommand{\ei}{\end{itemize}}
\def\B{\mathbb{B}}
\def\R{\mathbb{R}}
\def\Z{\mathbb{Z}}
\def\D{\mathbb{D}}
\def\A{A_\infty}
\def\kk{\bf{k}}
\def\om{\omega}
\def\Om{\Omega}
\def\ga{\gamma}    
\def\Ga{\Gamma}
\def\al{\alpha}
\def\be{\beta}
\def\de{\delta}
\def\De{\Delta}
\def\vp{\varphi}
\def\la{\lambda}
\def\La{\Lambda}
\def\si{\sigma}
\def\Si{\Sigma}
\def\mor{\mathrm{Mor}}
\def\ep{\varepsilon}
\def\p{\partial}
\def\nd{\noindent}
\def\bull{\hfill$\Box$\\}
\def\proof{\nd {\bf Proof.\ }}
\def\g{{\bf g}}
\begin{document}
\vskip 1cm
\begin{center}
{\sc Morse complexes and multiplicative structures
\vspace{1cm}

Hossein Abbaspour \& Fran\c cois Laudenbach}
\end{center}

\title{}
\author{}
\address{Laboratoire de
math\'ematiques Jean Leray,  UMR 6629 du
CNRS, Facult\'e des Sciences et Techniques,
Universit\'e de Nantes, 2, rue de la
Houssini\`ere, F-44322 Nantes cedex 3,
France.}
\email{hossein.abbaspour@univ-nantes.fr, francois.laudenbach@univ-nantes.fr}

\keywords{Morse theory, pseudo-gradient}

\subjclass[2000]{57R19}

\begin{abstract} In this article we lay out the details of Fukaya's $A_\infty$-structure of the Morse complexe of a manifold possibly with boundary. We show that this $A_\infty$-structure is homotopically independent of the made choices. We emphasize the transversality arguments that make some fiber products smooth.
\end{abstract}
\maketitle
\tableofcontents
\thispagestyle{empty}
\vskip 1cm

\section{Introduction} 
In \cite{fukaya} Fukaya outlined the construction of  an $A_\infty$-category  whose 
objects are the smooth functions on a given closed manifold $M$ and the set of the morphisms 
$\mor (f,g)$ is $\Z$-module generated by the critical points of $g-f$. He describes the $\A$-operations
$$
m_n: \mor (f_1,f_2) \otimes \mor (f_1,f_2) \cdots \otimes \mor (f_{n-1},f_n) \to \mor (f_1,f_n)
$$
by counting points with sign (orientation) on the zero-dimensional moduli space of 
flow lines intersection according to the scheme provided by a generic (trivalent) rooted tree.  

As obvious as it is, these operations are only partially defined, meaning that each operation $m_n$ is only defined for 
generic  function $f_i$'s.  In particular, by taking $f_i=if$, where $f\in C^\infty (M)$ is a generic  Morse function, 
 the existence of an  $\A$-structure on the Morse complex of $f$ is suggested. 
 Note that in this example $\mor(if,(i+1)f))$ is precisely the Morse complex of $f$. 
 
 In the present article, not only we give an accurate construction of the hitherto described $\A$-structure on the Morse complex of a Morse function $f$, but also we prove that this  $\A$-structure is well-defined up to quasi-isomorphism of $\A$-algebras. It turns  out that the construction of 
$\A$-quasi-isomorphisms  requires  to extend Fukaya's $\A$-structure to manifolds with boundary.

The existence of the above-mentioned $A_\infty$-structure has been discussed by various authors (\cite{abou2, mescher} 
and more recently \cite{mazuir} ) for closed manifolds  using the gradient-tree moduli space.  Since they use  metric 
trees, the $A_\infty$-relations are the immediate consequence of breaking/gluing properties of metric trees. Another 
approach (taken in more details in \cite{char-Woodward}, for instance) is to adapt Floer-Seidel's idea (\cite{floer,seidel})
  for the construction of Lagrangian Fukaya category to the special case of  the graph  of $df$ in $T^*M$  as a 
  Lagrangian submanifold, and then translate the construction to  obtain the desired structure on the Morse complex. 

These methods, despite some advantages, rely on some sort of infinite dimension analysis for a  problem which should have  \emph{a priori} a finite dimensional solution. In this paper we propose an alternative method which uses the standard method of intersection theory \emph{ \`a la Thom} for submanifolds (with eventually conic singularities) in $M$. In order to prove  that the structure is well-defined up to $A_\infty$-quasi-isomorphisms, we are naturally led to consider the Morse theory of the manifolds with  boundary which  has already been developed  by the 
second author \cite{lauden1} for which we give 
 a summary.  

For a given $n$-dimensional compact manifold $M$ with boundary and a {\it generic} Morse 
function $f:M\to\R$, generic meaning that $f$ has no critical point on the boundary
and that the restriction $f_\partial$ of $f$ to the boundary $\partial M$ is a Morse function.
For the purpose of the present paper, it is useful to assume that $M$ is orientable.

We recall that there are two types $+$ and $-$ of critical points of $f_\partial$.  
A critical point $x$ of $f_\partial$ is of type $+$ (resp. $-$) 
if  $\langle df(x), n(x)\rangle$ is positive (resp. negative); here $n(x)$ is a vector in $T_xM$ pointing outwards.
We shall denote by $crit_k f$ the set of critical points  
of $f$  (in the interior of $M$) of index $k$
and by $crit_k^+f_\partial$ (resp. $crit_k^-f_\partial$) the set of critical points
of $f_\partial$ of index $k\in \{0,\ldots,n-1\}$ 
which are of type $+$ (resp. $-$).

This setting was already considered in
\cite{lauden1} where the main idea was  to introduce so-called {\it adapted pseudo-gradients}, defined as follows.\footnote{In \cite{lauden1}, the terminology is different: the critical points of $f_\p$ of type $+$ (resp. $-$)
are said to be of Dirichlet type (resp. Neumann type). The labelling, Neumann or Dirichlet, comes from similar results which have been obtained previously 
in Witten's theory of de Rham cohomology
for manifolds with boundary (see \cite{chang, nier, shubin}). 
}

A vector field $X^+$ is said to be {\it positively adapted} to $f$ if  the  following conditions are fulfilled:
\begin{itemize}
\item[1)] $X^+\cdot f>0$ apart from $crit f \cup crit^+f_\partial$;
\item[2)] $X^+$  points inwards at every point of $\partial M$ except in some neighborhood
of $crit^+ f_\partial$ where $X^+ $ is tangent to the boundary; 
 \item[3)] near $crit f\cup crit^+ f_\partial$  the vector field  $X^+$ 
has a specific form  
with respect to the Euclidean metric of  some 
\emph{simple} Morse coordinates  (see Definition \ref{simple}). 
 \end{itemize}
 Since  the flow of $X^+$ is positively complete, each $x\in crit_k f\cup crit_{k-1}^+f_\partial$
  has a global unstable
 manifold $W^u(x)$ diffeomorphic to $\R^{n-k}$.  It has also a {\it local} stable manifold
 $W^s_{loc}(x)$ diffeomorphic to $\R^{k}$ if $x\in crit_kf$ and to the half-space $\R^k_-$ if $x\in crit^+_{k-1}f_\p$. 
  
 The vector field is said to be {\it Morse-Smale} when all these  (positively) invariant 
  manifolds  intersect mutually transversely. Under this assumption,
  after choosing arbitrarily orientations of the (local) stable manifolds, one defines a graded complex
  $$C_*(f,X^+)=C^+_*=\left( C_n^+\mathop{\longrightarrow}\limits^{\partial^+}\cdots
  C_k^+\mathop{\longrightarrow}\limits^{\partial^+}\cdots C_0^+\right).
  $$
  Here,  $C_k^+$ is the $\Z$-module freely generated by $crit_k f\cup crit_{k-1} ^+f_\partial$;  a generator $x$
  of $C_k^+$ is said to be of degree $k$; the degree of $x$ is noted $\vert x\vert$.
  The differential $\partial ^+$ is defined by choosing orientations of the local stable manifolds and
  counting with signs the connecting orbits 
  from $y$ to $x$ when  $\vert x\vert= \vert y\vert+1$ (note that the 
   unstable manifolds are co-oriented.)

  Similarly, a vector field $X^-$ is said to be {\it negatively adapted} to $f$ when it is 
  positively adapted to $-f$. Notice that $X^-\cdot f<0$  apart from $crit f\cup crit^-f_\partial$. 
  Choose
  such an $X^-$ which is Morse-Smale and choose an orientation of its unstable manifolds;
  One defines a second complex
  $$C_*(f,X^-)=C^-_*=\left( C_n^-\mathop{\longrightarrow}\limits^{\partial^-}\cdots
  C_k^-\mathop{\longrightarrow}\limits^{\partial^-}\cdots C_0^-\right).
  $$
  Here, $C_k^-$ is the $\Z$-module freely generated by $crit_k f\cup crit_{k} ^-f_\partial$.
  Notice the shift of the grading which is justified by the equality:
  $$C_k^+(f)=C_{n-k}^-(-f).
  $$
   The differential $\partial ^-$ is defined
   on a generator $x\in C_k^-$ by counting with signs the connecting orbits of $X^-$
    from $x$ to $y\in C_{k-1}^-$. The main result in \cite{lauden1} is the following.
    
 \begin{thm} ${}$ \label{complexes}
 
 \nd {\rm 1)} The homology of the complex $C_*(f,X^-)$ is isomorphic to 
 $H_*(M;\Z)$.
 
 \nd  {\rm 2)} The homology of the complex $C_*(f,X^+)$ is isomorphic to 
 $H_*(M, \partial M;\Z)$.
\end{thm}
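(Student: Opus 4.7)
The plan is to prove part~(1) by Morse theory on the sublevel sets $M_a=f^{-1}(-\infty,a]$, identifying the resulting cellular model with $C_*(f,X^N)$, and to deduce part~(2) from part~(1) via the module identity $C_k^D(f)=C_{n-k}^N(-f)$ combined with Poincar\'e--Lefschetz duality.

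For part~(1), the idea is to trace how the homotopy type of $M_a$ evolves as $a$ sweeps through the critical values of $f$ and $f_\partial$. Interior critical points of $f$ of index $k$ produce classical $k$-handle attachments along the $X^N$-unstable disk, exactly as for closed manifolds. For a Neumann critical point $x$ of $f_\partial$ of index $k$, the tangency of $X^N$ to $\partial M$ near $x$ together with the Morse-chart condition~(3) shows that $M_a$ picks up a new $k$-cell as $a$ crosses $f(x)$, namely the unstable disk of $x$ in $\partial M$ pushed off the boundary by the forward flow of $X^N$. Dirichlet critical points of $f_\partial$ are not zeros of $X^N$ and therefore contribute no new cell. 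The Morse--Smale transversality controls the incidence between successive cells: the coefficient between a $k$-cell at $x$ and a $(k-1)$-cell at $y$ is the signed count of rigid $X^N$-trajectories from $x$ to $y$, which is exactly the definition of $[\partial^N x:y]$. Thus $C_*(f,X^N)$ is the cellular complex of a CW model of $M$, and its homology is $H_*(M;\Z)$.

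For part~(2), the orbits of $X^D$ from $y\in C_{k-1}^D(f)$ to $x\in C_k^D(f)$ counted by $\partial^D$ are, when viewed with respect to $-f$ for which $X^D$ is $N$-adapted, precisely the Morse orbits from $y$ (of $(-f)$-index $n-k+1$) down to $x$ (of $(-f)$-index $n-k$) counted by the $N$-differential of $(-f,X^D)$. Consequently $\partial^D$ is the transpose of $\partial^N(-f,X^D)$ under the identification $C_k^D(f)=C_{n-k}^N(-f)$, and we obtain
$$H_k(C_*(f,X^D))\;\cong\;H^{n-k}(C_*(-f,X^D))\;\cong\;H^{n-k}(M;\Z)\;\cong\;H_k(M,\partial M;\Z),$$
where the first isomorphism expresses that transposing a free chain complex and reversing the grading by $k\mapsto n-k$ computes cohomology, the second combines part~(1) with the universal coefficient theorem applied to free complexes, and the third is Poincar\'e--Lefschetz duality on the oriented compact manifold with boundary $M$.

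The main obstacle is the Neumann handle-attachment step: one must verify that the unstable disk of a Neumann critical point, initially lying in $\partial M$, is genuinely promoted to a topological $k$-cell in $M$ by the forward flow of $X^N$, and that the attaching map to the previous sublevel set is well defined. This requires a careful compactness analysis of the moduli spaces of broken $X^N$-trajectories that mix interior and boundary critical points -- precisely what the reinforced non-degeneracy hypothesis singled out in the footnote to condition~(3) is designed to ensure.
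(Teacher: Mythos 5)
This theorem is not proved in the present paper; it is quoted as the main result of the reference \cite{lauden1}, so there is no in-paper proof to compare against. Assessed on its own terms, your argument for part~(1) is the expected one and is sound in outline: $X^N$ is a vector field on $M$ vanishing exactly on $\mathrm{crit}\,f \cup \mathrm{crit}^N f_\partial$, pointing inward elsewhere on $\partial M$, and its flow gives $M$ the homotopy type of a CW complex with one $k$-cell per index-$k$ zero; the Morse--Smale condition identifies the cellular boundary with $\partial^N$. You are right that the delicate point is the handle attachment at a Neumann critical point, which needs the reinforced local model of condition~(3); this is where the work of \cite{lauden1} (and of Morse--Van~Schaack) lives.

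Your part~(2) takes a genuinely different route from what the cited source almost certainly does. The natural direct proof is symmetric to part~(1): apply the sublevel-set argument to $-f$ with the $N$-adapted field $X^D$ to build $M$ from the top down, obtaining a CW structure on $M$ \emph{rel} $\partial M$ in which an interior critical point of $f$-index $k$ and a Dirichlet boundary point of $f_\partial$-index $k-1$ each contribute a relative $k$-cell, whence $H_*(C_*^D) \cong H_*(M,\partial M;\Z)$ with no orientability hypothesis. Your route instead feeds $(-f,X^D)$ into part~(1), identifies $\partial^D$ with the basis-transpose of $\partial^N(-f,X^D)$ under $C_k^D(f)=C_{n-k}^N(-f)$, and then invokes Poincar\'e--Lefschetz duality. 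This is valid in the paper's setting where $M$ is assumed oriented, but it is strictly less general, and it trades one piece of Morse theory for a duality theorem that is itself usually proved by (dual) handle decompositions. Two small points to tighten: first, the middle isomorphism $H^{n-k}(C_*^N(-f,X^D))\cong H^{n-k}(M;\Z)$ needs more than the universal coefficient theorem applied to each complex separately; you should either exhibit a quasi-isomorphism $C_*^N(-f,X^D)\to C_*^{\mathrm{cell}}(M)$, or note that over $\Z$ a bounded complex of finitely generated free modules is determined up to chain homotopy equivalence by its homology, so the isomorphism of homologies from part~(1) already forces an isomorphism of cohomologies. Second, the assertion that $\partial^D$ is the transpose of $\partial^N(-f,X^D)$ is correct at the level of unsigned incidence numbers, but the signed statement depends on the orientation convention $or(W^s(z))\wedge or(W^u(z))=or(M)$ and on a choice of orientations of unstable manifolds on both sides; one should say a word about why a consistent choice can be made (this only changes bases by signs, so it does not affect the conclusion, but it should be acknowledged).
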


Now, we  present an important complement to  Theorem \ref{complexes}  
dealing with the multiplicative structures which exist on the considered complexes.

\begin{thm} \label{infty} Let $M$ be a compact  \emph{oriented} manifold. 
Then, each of the complexes $C_*^+$ and $C_*^-$
 can be endowed with a 
 structure of  $A_\infty $-algebra 
$\mathcal G=\{m_1,m_2, \ldots\}$ such that $m_1$ is the differential
of the considered complex;  here $m_d$ denotes the $d$-fold product.
  
 This structure is well-defined up to ``homotopy'' from the data of
a \emph{coherent} family of  Morse-Smale approximations of $X^-$ (resp. $X^+$).
\end{thm}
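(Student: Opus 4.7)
The plan is to follow Fukaya's scheme, realising the higher products $m_d$ as signed counts of gradient ``flow-trees'' built from a \emph{coherent} family of Morse-Smale approximations of $X^N$ (the Dirichlet case being symmetric through the duality $C_k^D(f)=C_{n-k}^N(-f)$), and to obtain homotopy invariance through a standard continuation argument adapted to the boundary setting. For each $d\geq 2$ I consider rooted planar metric trees $T$ with $d$ incoming leaves and one outgoing root, whose internal edges carry positive lengths and whose external edges are half-infinite. To each edge $e$ of $T$ I attach a small Morse-Smale perturbation $X_e$ of $X^N$, the assignment varying continuously over the compactified Stasheff-type moduli of metric trees and restricting correctly on its boundary strata (i.e.\ on a tree obtained by collapsing an internal edge or by concatenating two subtrees). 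Given critical points $x_1,\ldots,x_d\in crit f\cup crit^N f_\partial$ (in planar order at the leaves) and $y$ (at the root), the moduli space
$$\mathcal{M}(T;x_1,\ldots,x_d;y)$$
is then defined as the set of continuous maps $\phi:T\to M$ whose restriction to each edge $e$ is an orbit segment of $X_e$, asymptotic to $x_i$ at leaf $i$ and to $y$ at the root. Taking the union over tree-shapes yields $\mathcal{M}_d(x_1,\ldots,x_d;y)$, of expected dimension $\sum_i ind(x_i)-ind(y)+d-2$.

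The main technical step, and the one I expect to be the chief obstacle, is to cut out every $\mathcal{M}(T;\ldots)$ transversely while controlling its compactification. Using the fiber-product transversality arguments that the paper explicitly promotes, together with a Sard-Smale argument on the space of coherent perturbations, I would show that each $\mathcal{M}(T;\ldots)$ is smooth of the expected dimension and that its Gromov-type compactification consists only of broken configurations obtained by shrinking an internal edge-length to zero or letting it tend to infinity. The genuinely new phenomenon in the Neumann setting is that some edges of $\phi(T)$ may live in $\partial M$, connecting generators of $crit^N f_\partial$: one must check that intersections of interior trajectories with the boundary-invariant flow data transverse-evaluate correctly at every internal vertex. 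The reinforced non-degeneracy condition of condition~(3) (and its footnote) furnishes the usual normal form at each asymptotic end, and the inward-pointing condition~(2) forbids escape through $\partial M$ elsewhere, so the zero-dimensional components are finite and the one-dimensional components have the expected codimension-one boundary.

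With these analytical facts granted, I set
$$m_d(x_1\otimes\cdots\otimes x_d)=\sum_y \#\mathcal{M}_d(x_1,\ldots,x_d;y)\,y$$
whenever the expected dimension vanishes, the signs being those determined by the chosen orientations of the unstable manifolds, the ambient orientation of $M$ and the standard tree-sign convention. The $\A$-relations
$$\sum(-1)^{\bullet}\,m_{d-k+1}\bigl(x_1,\ldots,m_k(x_{i+1},\ldots,x_{i+k}),\ldots,x_d\bigr)=0$$
are then read off from the codimension-one boundary of the one-dimensional pieces of $\mathcal{M}_d$: coherence of the perturbation family identifies each such boundary point with a unique concatenation of two rigid flow-trees, each of the required combinatorial type. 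For $d=1$ this analysis recovers $m_1^2=0$, which is exactly the differential whose homology is computed by Theorem~\ref{complexes}.

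Finally, given two coherent families of perturbations I would interpolate through a one-parameter family of coherent data parametrised by $[0,1]$ and count the zero-dimensional parametric flow-tree moduli, thereby producing an $\A$-morphism $\{F_d\}$ between the two resulting $\A$-structures whose linear term $F_1$ is the usual Morse continuation map, hence a quasi-isomorphism by Theorem~\ref{complexes}; a second-level interpolation of two such interpolations, parametrised by $[0,1]^2$, yields an $\A$-homotopy between $\{F_d\}$ and $\{F'_d\}$, establishing the uniqueness up to homotopy. The additional analytic issue in these parametric moduli is once more transversality in the presence of an extra parameter, handled by enlarging the coherent-perturbation scheme to include the interpolation direction(s); no new phenomenon appears near $\partial M$ beyond those already addressed in the static construction.
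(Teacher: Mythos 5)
Your proposal is mathematically sound as an outline, but it travels the route the paper deliberately \emph{avoids}. You build Stasheff-type moduli spaces of metric flow-trees (maps $\phi:T\to M$ with varying edge lengths) and appeal to Sard--Smale over a universal space of coherent perturbations; this is the Fukaya/Betz--Cohen/Fukaya--Oh/Abouzaid scheme, and the paper's introduction explicitly singles it out as ``rather heavy when it comes to verifying the details.'' The paper instead replaces the maps-from-trees moduli space by an \emph{iterated fiber product} $I(T,\mathcal D,x_1,\ldots,x_d)$ of stable manifolds $W^s(x_i,X_{e_i})$ and graphs $G_{e}$ of positive semi-flows inside $M^{\times n(T)}$; transversality is then a finite sequence of applications of Thom's transversality theorem with constraints, and compactness is read off from the conical stratification of each factor (Propositions \ref{strat_vstab}--\ref{strat-multi}). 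The two approaches also diverge on coherence: you simply \emph{posit} a perturbation family varying continuously over the compactified moduli of trees and restricting correctly on boundary strata, while the paper gives an explicit and checkable device --- the $g$-standard decoration, decorating the edge $e^j_h$ with $g_*^{\circ(j-1)}X^D$ for $g$ in a Baire-dense subset of the group $G_\Sigma$ of diffeomorphisms preserving the unstable skeleton $\Sigma$ --- and proves (Lemma \ref{G_h-transverse}, Propositions \ref{coherence-d}, \ref{transition_prop}) that this single diffeomorphism simultaneously yields admissibility, coherence under Fukaya subtree inclusion, and transition compatibility for all $d$. Finally, your homotopy step via $[0,1]$- and $[0,1]^2$-parametric moduli has the same content as the paper's, but the paper packages it as a \emph{Morse concordance} on $\hat M=M\times[0,1]$ with an auxiliary Morse function $h$ on the interval (a finite-dimensional Floer continuation), which again trades analysis of parametric moduli spaces for finite-dimensional fiber products. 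Both routes buy the same theorem; yours aligns with the existing literature, while the paper's buys concreteness of the transversality and coherence arguments at the cost of developing the fiber-product calculus (including the orientation conventions of Section \ref{orientations}) from scratch. Be aware that the hard part you flag --- cutting out the moduli transversely with a coherent scheme --- is precisely where your proposal remains schematic and where the paper's iterated fiber product and $g$-standard decoration do the actual work.
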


The approximations in  question will be subjected to some transversality conditions for which 
the possible choices are
 not at all unique. The \emph{coherence} (Definition \ref{coherence-defn}) will be a form of naturality of these choices 
 with respect to a certain group of diffeomorphisms of $M$.

  The basic definitions about $A_\infty$-structures are recalled in Appendix \ref{appendix-Astr}. As we shall see in Section
  \ref{homotopic-structures}, the concept of \emph{homotopy} of $\A$-structures 
  is the algebraic translation of the idea of {\it cobordism} for the geometric objects we are going to introduce further.
  
  Sections \ref{towards} to \ref{orientations} are devoted to topological preparation to multiplicative structures
by means of a large use of Thom's transversality Theorem with constraints \cite{thom-mex}. Here are some more details:

-- Section \ref{gradients}  recalls from \cite{bz}
the compactification of the stable submanifolds and their $C^1$-\emph{conic} singularities. Appendix \ref{appendix-comp} states some generalities on this type of singularity.\\

-- Section \ref{trans} presents the most important tool for perturbing the stable manifolds in a \emph{coherent} way 
in Section \ref{coherence}. This is the hardest part and it relies on a new concept in transversality theory which we name \emph{immediate transversality} (see also Appendix \ref{sard}.)\\
 
-- Section \ref{towards} makes a list of transversality conditions which will be used for defining products of an $A_\infty$-structure. These conditions are generic and open. \\

-- Sections \ref{coherence} and  \ref{transition}  treat refinements on transversality conditions allowing the 
products to satisfy the $A_\infty$-relations. \\

--  Section \ref{orientations}   
deals with the orientation of the codimension-one strata in the compactified  geometric objects introduced in Section  \ref{towards}. \\

-- In Section \ref{A-infty} we introduce the $\A$-structure and prove $\A$-relations. \\

-- Section \ref{homotopic-structures} explains why different choices in the previous constructions lead to {\it concordant multi-intersections}. That is the topological ingredient for {\it homotopy} of $A_\infty$-structures.\\

The proof of Theorem \ref{infty} will be achieved in Sections \ref{A-infty} and \ref{homotopic-structures}.\\

The main example with non-empty boundary that we have in mind is 3-dimensional. Consider a link $L$ in the 3-sphere
$S^3$, equipped with the standard height function $h:S^3\to \R$.
The manifold with boundary we are interested in is $M:=S^3\smallsetminus U(L)$, where 
$U(L) $ is the interior of a small tubular neighborhood of L, built by means of an exponential map. 
 In general position of $L$, the height function induces a Morse function
on $L$, and hence a generic Morse function $f$ on $M$. 
Each maximum of $h\vert L$ gives rise to a pair of critical points of $f_\partial$,
one of type $-$ and index 2,  and one of type $+$ and index 1 (hence of degree 2 in 
$C_*^+$). Each minimum of $h\vert L$ gives rise to a pair of critical points of $f_\partial$,
one of  type $-$ and index 1,  and one of type $+$ and index 0 (hence of degree 1 in 
$C_*^+$). It is reasonable to expect that the Morse complexes of this pair $(M,f)$
informs a lot  on the topology of $L$. We have not yet explored this topic systematically.
As an exercise only, by using the {\it Massey product }
which is derived from the third product of the $A_\infty$-structure on 
the  {\it negative} complex, one could  prove {\it \`a la Morse} 
that the Borromean link is not trivial.  And this link remains non-trivial if it takes place in a ball
of any ambient 3-manifold.

More generally, one could distinguish two embeddings of a $k$-manifold into an $n$-manifold
by considering the complementary of their tubular neighborhoods and the $A_\infty$-structures of them.\\

\nd{\bf Acknowledgements.} 
We are deeply grateful to the anonymous referee who pointed out a serious gap in a previous version. 
The second author very much thanks Christian Blanchet who led him to this topic 
many years ago. We also thank Thibaut Mazuir, Ga\"el   Meigniez  and  and Tadayuki Watanabe  for  
helpful conversations.

\section{Preliminaries on adapted gradients}\label{gradients}

In this paper, we will only consider the case of the  theory {\it relative to the boundary},
dealing with the 
critical points of positive type and positively adapted gradient $X^+$. Similar results hold true for a \emph{negative-type} complex. This section of preliminaries is aimed at the following  topics:
\begin{enumerate}
\item to define the global stable manifolds;
\item to specify what are \emph{simple} Morse coordinates;\footnote{ When $M$ is closed, Harvey-Lawson \cite{harvey} named such coordinates $f$-tame.} 
\item to describe the closure of the invariant manifolds;
\item to introduce the graph of a positive semi-flow  and its compactification.
\end{enumerate}

Let $X_t^{+}$ denote the flow at time $t$ of the  positively complete vector field $X^+$. The following definition
makes sense.

 \begin{defn}  \label{global-stable} 
 For $x\in crit_k f \cup crit^+_{k-1}(f_\partial)$, 
 the global stable manifold  of $x$ with respect to $X^+$ is defined as the union
 $$W^s(x,X^+)= \mathop{\bigcup}\limits_{t>0}\left(X_t^+\right)^{-1}\left(W^s_{loc}(x)\right).$$ 
 \end{defn}

This manifold is diffeomorphic to a closed $k$-ball with punctures on the boundary corresponding to all 
critical points which lie in its frontier; the points of its boundary are in $\p M$.

\begin{defn}${}$\label{simple}

\nd{\rm 1)} For $p\in crit _kf$,  \emph{simple} Morse coordinates\footnote{ So as not to confuse the coordinates and the critical point, the latter is here noted very differently.}
about $p$ are coordinates 
where $f$ reads 
$$ f(x_1, \ldots, x_n)=f(p) -x_1^2-\cdots-x_k^2+ x_{k+1}^2+\cdots+ x_n^2\,.
$$

\nd{\rm 2)} For $p\in crit^+_{k-1}f_\p$, \emph{simple} Morse coordinates about $p$ are coordinates $(x_1,\ldots, x_n)$ 
such that $$\left\{
\begin{array}{l}
x_n= 0\quad \text{at every point of }\p M \text{ and } x_n<0 \text{ in the interior of } M;\\
f(x_1,\ldots,x_n)= f(p)-x_1^2-\cdots-x_{k-1}^2+x_k^2+\cdots+x_{n-1}^2+ x_n. 
\end{array}\right.
$$

\nd{\rm 3)} The vector field $X^+$ is said to be \emph{adapted} to such coordinates if, near $p\in crit^+_{k-1}f_\p$, 
it reads
$$ X^+(x_1,...,x_n)= -x_1\p_{x_1} -\cdots-x_{k-1}\p_{x_{k-1}}+x_k\p_{x_k}+\cdots+x_{n-1}\p_{x_{n-1}}-x_n\p_{x_n}
$$
\end{defn}

Then,  
 in some such simple coordinates $X^+$ is radial on each of the local 
stable/unstable manifolds.\footnote{ Saying that $X^+$ is a gradient is correct, but it is not a gradient of $f$
since it vanishes at a point where $df$ does not vanish.}
When $M$ is closed, this implies that the closure of $W^s(p)$, noted $cl(W^s(p))$, 
is a stratified set with $C^1$\emph{ conic 
singularities}  (or for short: with conic singularities):
 each stratum $\Si$ of $cl(W^s(p))$ is a smooth submanifold of $M$ 
 and the way that $W^s(p)$ approaches $\Si$
 looks like a cone sub-bundle---in a $C^1$ sense---of the normal disc bundle $\nu$ to $\Si$ in $M$. In each fiber
 $\nu_x$, $x\in \Si$, the trace of $W^s(p)$ is a cone based on 
 a similar submanifold 
in the unit sphere of $\nu_x$ \cite{bz}.\footnote{
As far as we know  such a claim is unknown for more general gradients.}
When the considered stratum $\Si$ is of codimension one in  $cl(W^s(p))$, the local structure of the closure of 
$W^s(p)$ is that of an \emph{open book} with finitely many pages whose $\Si$ is the \emph{binding set} (see Figure \ref{conic}). 
\begin{center}
\begin{figure}[h]
\includegraphics[scale =.6]{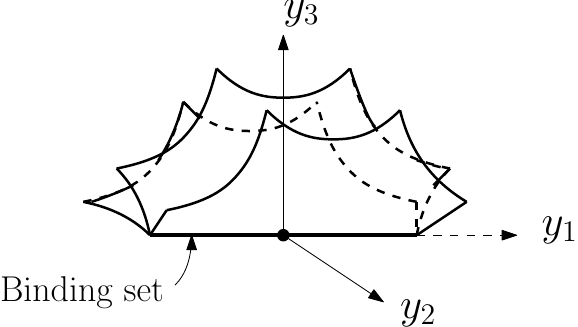}
\caption{$\dim M= 3$, $ind(p)= 2$, open book with 3 pages. The critical point $p$ is not the origin of coordinates 
$(y_1,y_2,y_3)$.}\label{conic}
\end{figure}
\end{center}
In particular, if $S$ is a submanifold of $M$ transverse to a stratum $\Si$ of $cl(W^s(p))$  
then $S$ is transverse to $W^s(p)$ near $S\cap\Si$ (Whitney condition A).
  
  This result extends to the case with non-empty boundary under some mild  assumption.
  Here is such an assumption (Morse-Model-Transversality) which will be
  made in the rest of the paper.
  
   \begin{defn} The gradient $X^+$ is said to fulfil condition \emph{(MMT)\footnote{Acronym for Morse Model Transversality.}} if the following is satisfied:
  For every $x\in crit f\cup crit^+ f_\partial$ and $y\in crit^+ f_\partial$\,,  the 
  neighborhood $U_y$ of $y$ in $\partial M$ where $X^+$ is tangent to the boundary of $M$
  is mapped by the flow of $X^+$ transversely to $W^s(x)$.
  \end{defn}
  
 Since $X^+$ is Morse-Smale, the transversality condition  is satisfied along 
  a small neighborhood $U$ of the local unstable manifold $W^u_{loc}(y, X^+)$. 
Then, after some 
  small perturbation of $X^+$ on $U_y\smallsetminus U$ which destroyes the tangency 
  of $X^+$ to $\partial M$ over there, condition (MMT) is fulfilled. 
 Thus, condition (MMT) is generic among the 
  positively adapted vector fields. 
   The following proposition can be easily proved  by the same method as in \cite{bz}.
  
  \begin{prop}${}$ \label{frontier} It is assumed that $X^+$ is Morse-Smale and fulfils condition {\rm (MMT)}. Then 
  the following holds.
  
  \nd $1)$ The global stable manifold $W^s(x)$ is a 
  submanifold  with boundary (not closed in general).
  
  \nd $2)$ If $z\in M$ belongs to the closure of $W^s(x)$, then there exists a broken $X^+$-orbit
  from $z$ to $x$. The number of breaking critical points defines a stratification of this closure
  $cl\!\left(W^s(x)\right)$.
  
  \nd{\rm 3)} This stratification has  $C^1$ conic singularities. 
  
  \end{prop}
  
  For the remainder of this paper, we consider a generic Morse function 
$f:M\to \R$ and 
 a positively adapted gradient $X^+$. 
The transversality conditions Morse-Smale and (MMT) are assumed.

The end of this section is devoted to introduce the notion of \emph{graph of a positive semi-flow} 
$\bar X$.
This is aimed to by-pass the following difficulty: if $S$ is a submanifold of $M$ and $X$ is a gradient  
the set of points of $M$ whose positive orbit reach $S$ can be very singular. The graph will be a tool of desingularization.

\begin{defn} \label{graph}The graph $Gr(\bar X)$ of a positive semi-flow $\bar X:[0,\infty)\times M\to M$ is the  
 part of $M\times M$ made of the pairs $(x,y)$ such that $y$ belongs to the \emph{positive}
 half-orbit of $x$, that is, there exists $t\in [0,+\infty)$ such that 
 $y= \bar X(t,x).$
  If $X$ is a gradient (or has no non-constant closed orbit), this time $t$ is unique except when $x$ is a zero of $X$.
 \end{defn} 

The graph contains the diagonal of $M\times M$. For a gradient semi-flow, 
 the graph  is a non-proper $(n+1)$-dimensional submanifold,  except at the points $(a,a)$
 where $a$ is a zero of $X$. Its compactification will be 
 discussed very soon. 
 
 The first projection $M\times M\to M$ induces
 $\si: Gr(\bar X)\to M$ which is called the {\it source} map. The second projection
 induces $\tau: Gr(\bar X)\to M$ which is called the {\it target} map. 
 These two maps have  a maximal rank,  except at points $(a,a)$ with $X(a)=0$. 

 \begin{ex}\label{Q} {\rm Let $Q:\R^n\to \R$ be the
 quadratic form of Morse index $k$ and rank $n$:
 $$Q(x_1,\ldots,x_n)= -x_1^2-\ldots-x_k^2+x_{k+1}^2+\ldots+x_n^2\,.
 $$ After taking local closure,
 the graph of the semi-flow  of 
 $\nabla Q$ looks  
 like, for $k= 1, \ldots,n$, the $\R$-cone  over an $n$-dimensional 
 band (that is, $\cong \R^{n-1}\times[0,1]$)
 bounded by two affine subspaces: one is  
 $(-1,\R^{k-1}, 0, \ldots, 0)\times ( 0, \ldots, 0, \R^{n-k})\subset \R^n\times\R^n$ 
 and the other is
 the part of the diagonal over $\{x_k=-1\}$. For $k= 0$, it is similar (change $Q$ to $-Q$)---see Figure 
 \ref{singular_graph}.
 }
 \end{ex}
 
 \begin{center}
 \begin{figure}[h]
 \includegraphics[scale =.6]{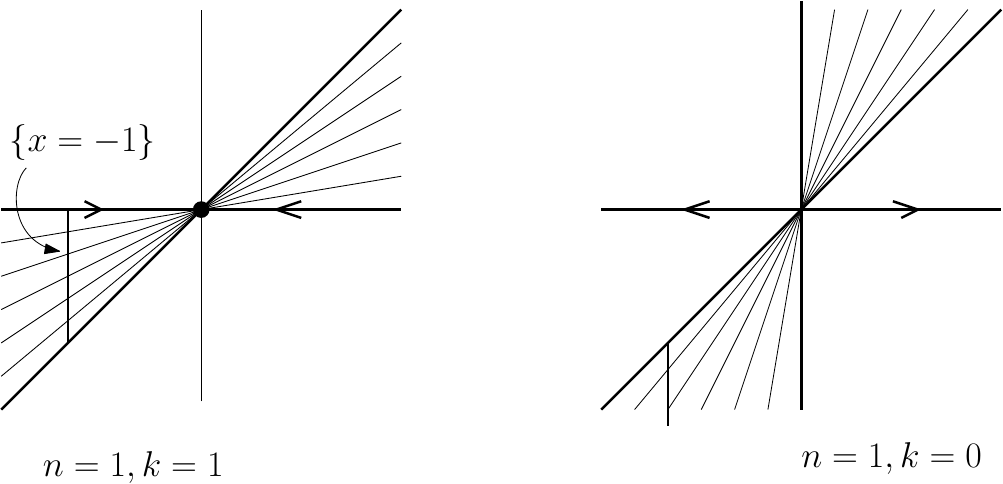}
 \caption{}\label{singular_graph}
 \end{figure}
 \end{center}

\begin{defn} Let $S$ be a submanifold of $M$, let $j:S\to M$  denote the injection 
and let $X$ be a gradient without zeroes on $S$, 
One defines the \emph{stable manifold} of $S$ with respect to X as the fiber product \footnote{ The notation as a limit in the categorical sense has the advantage 
to denote the involved maps though it is nothing but a fiber product.}  
$$W^s( S, X): = \lim\left(Gr(\bar X)\mathop{\longrightarrow}\limits^{\tau} M\mathop{\longleftarrow}
\limits^j S\right),
$$
\end{defn}
 
 In general, $W^s(S,X)$ is a singular object. But as a consequence of what we said about the rank of $\tau$, we have the following.
 
 \begin{prop}\label{tau-submer} In the above setting, assume $X$ fulfils the generic propety that 
 no zero of $X$ lies on $j(S)$. Then, the stable manifold $W^s(S,X)$ is a genuine submanifold of $Gr(\bar X) \subset M\times M$. 
 \end{prop}

 Finally, still with the same assumptions, we state something about the compactification of the graph 
 $Gr(\bar X^+)\subset M\times M$.
 First, the diagonal of $M\times M$ and $\p M\times M$ give rise to (singular) boundary components of $Gr(\bar X^+$.
 The rest of the closure $cl(Gr):=clGr(\bar X^+)$ is described in the next proposition.

\begin{prop}\label{strat_graph}
\nd {\rm 1)} The closure $cl(Gr)$ of $Gr$ in $M\times M$ 
is made of all pairs of points $(x,y)$ where $y$ belongs to the positive orbit
of $x$ or any broken positive orbit
starting from $x$. 

\nd {\rm 2)} This $cl(Gr)$ is a stratified set. Apart from the diagonal and $\p M\times M$, the 
 strata of positive codimension 
 are  made of pairs of points $(x,y)$ where $x$  is connected  to $y$ by a broken orbit passing 
through a  non-empty sequence of critical points in $crit f\cup crit^+f_\partial$. 

\nd {\rm 3)} Among these strata,
the codimension-one strata are made of pairs of distinct points $(x,y)$
where $x$ belongs to the stable manifold
$W^s(p)$ for some $p\in crit f\cup crit^+f_\partial$ and 
$y$ belongs to the unstable manifold $W^u(p)$.\footnote{ Observe that 
the index of $p$ has no effect on the codimension of the stratum.}

\nd{\rm 4)} The singularities of $cl(Gr)$ are $C^1$-conic.
\end{prop}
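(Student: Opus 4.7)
The proof parallels that of Proposition \ref{strat_vstab}: the local model near a critical point, sketched in Example \ref{Q}, governs everything. I will treat the three assertions in turn.

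\smallskip

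For assertion (1), the inclusion $\supseteq$ is a standard approximation argument: given a broken positive orbit from $x$ to $y$ through critical points $z_1,\ldots,z_k$, one builds an explicit family of non-broken orbits converging to it by using the local product structure of $\bar X^D$ in a Morse chart around each $z_i$, where the unstable and stable directions decouple. For the converse $\subseteq$, suppose $(x_n,\bar X^D_{t_n}(x_n))\to (x,y)$. If a subsequence of $t_n$ is bounded, continuity of the flow yields $y=\bar X^D_t(x)$ for some $t\geq 0$. Otherwise $t_n\to\infty$, and since $f$ decreases strictly along non-constant orbits and $f$ is bounded on $M$, the orbit segments must spend arbitrarily long time in arbitrarily small neighborhoods of critical points. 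By extracting successive sub-limits at the moments of entrance and exit of fixed Morse charts, one recovers a broken orbit $x \rightsquigarrow z_1 \rightsquigarrow \cdots \rightsquigarrow z_k\rightsquigarrow y$.

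\smallskip

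For assertion (2), I would stratify $cl(G)\smallsetminus G$ by the ordered sequence $(z_1,\ldots,z_k)$ of critical points appearing in the broken orbit linking $x$ to $y$. The points of the corresponding stratum $S(z_1,\ldots,z_k)$ are pairs $(x,y)$ with $x\in W^s(z_1)$, $y\in W^u(z_k)$, and the compatibility condition that, for each consecutive pair $(z_i,z_{i+1})$, there is a $X^D$-orbit from $z_i$ to $z_{i+1}$, that is, $W^u(z_i)\cap W^s(z_{i+1})\neq\emptyset$. Under the Morse--Smale hypothesis together with (MMT), each such stratum is a smooth submanifold of $M\times M$; its local description is obtained by concatenating the Morse-chart pictures of Example \ref{Q} at each $z_i$. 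An inductive dimension count shows that $S(z_1,\ldots,z_k)$ has codimension $k$ in $cl(G)$. This stratification is locally finite because the number of critical points is finite.

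\smallskip

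For assertion (3), one sets $k=1$. The stratum $S(z)$ consists of pairs $(x,y)$ with $x\in W^s(z)\smallsetminus\{z\}$ and $y\in W^u(z)\smallsetminus\{z\}$, so its dimension is $(n-\mathrm{ind}(z))+\mathrm{ind}(z)=n$. Since $\dim G=n+1$, the codimension of $S(z)$ in $cl(G)$ is indeed $1$. The singular locus $(z,z)$, which is the tip of the local cone of Example \ref{Q}, lies in strata of strictly higher codimension, consistent with the picture of Figure \ref{singular_graph}.

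\smallskip

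The main obstacle is the smoothness and dimension count of the higher-codimension strata $S(z_1,\ldots,z_k)$ when several consecutive critical points are encountered, especially when some of the $z_i$ are boundary critical points of Dirichlet type: the Morse chart near such a $z_i$ has the additional tangency of $X^D$ with $\partial M$, and (MMT) is precisely what is needed to ensure that successive broken trajectories fit together transversely. Once this local analysis is carried out, the global stratified structure follows by gluing Morse-chart models via the flow, exactly as in the proof of Proposition \ref{strat_vstab}.
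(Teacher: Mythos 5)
Your approach is essentially the one the paper points to (it simply invokes the closed-manifold argument of \cite{bz} together with the local Morse-model picture of Example \ref{Q}), and your treatment of assertions (1) and (3) is sound. The sublimit extraction for the inclusion $\subseteq$ in (1), the recognition that the codimension-one stratum over a single critical point $z$ is $\dot W^s(z)\times\dot W^u(z)$ of dimension $n$, and the observation that the index of $z$ drops out of the count, are exactly the ingredients the paper has in mind (see also the sentence right after the Proposition and the proof of Proposition \ref{comp-graph}).

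There is, however, one quantitative claim in your treatment of (2) that is simply wrong, and you should fix it: you assert that ``$S(z_1,\ldots,z_k)$ has codimension $k$ in $cl(G)$.'' Observe that once $(x,y)$ has $x\in \dot W^s(z_1)$ and $y\in \dot W^u(z_k)$, the existence of a broken orbit from $z_1$ through $z_2,\ldots,z_{k-1}$ to $z_k$ is a condition on the sequence $(z_1,\ldots,z_k)$ alone, not on $(x,y)$. Hence the stratum, when non-empty, is the \emph{full} product $\dot W^s(z_1)\times\dot W^u(z_k)$, of dimension $\bigl(n-\mathrm{ind}(z_1)\bigr)+\mathrm{ind}(z_k)$, so its codimension in the $(n+1)$-dimensional $cl(G)$ is
\[
1+\mathrm{ind}(z_1)-\mathrm{ind}(z_k),
\]
not $k$. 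By the Morse--Smale condition the indices must drop strictly along a broken orbit, so this codimension is $\geq k$, with equality only when each consecutive index drops by exactly one. (The same phenomenon occurs for $cl\bigl(W^s(x)\bigr)$ in Proposition \ref{strat_vstab}: the codimension is the index gap, not the number of breaks.) Your conclusion for assertion (3) is unaffected, since the corrected inequality still forces codimension $\geq 2$ as soon as $k\geq 2$; but as written, the intermediate codimension claim is false and should be replaced by the index-gap formula, which is also what makes the paper's remark ``the index of $z$ has no effect on the codimension of the stratum'' specific to the case $k=1$.

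Finally, a smaller remark of rigor: the sets $S(z_1,\ldots,z_k)$ as you define them need not be pairwise disjoint, since a given pair $(x,y)$ can admit broken orbits through several different critical sequences. To obtain a genuine stratification one should, as in \cite{bz}, define the strata as the complements $S_{\geq k}\smallsetminus S_{\geq k+1}$ (or, equivalently, index by the minimal breaking) and then check local closedness using the Morse-model concatenation; you allude to this in your last paragraph, and (MMT) is indeed what makes the concatenation transversal near Dirichlet-type boundary critical points.
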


\proof The proof of this proposition is very similar to the one made in \cite{bz} concerning the compactification
 of the stable/unstable manifolds of an adapted gradient. It consists---under the Morse-Smale assumption---of  looking at 
 how  the closure of a manifold with conic singularities varies when it is pushed by the  flow across a Morse model.
 The proof is the same in the case of a closed manifold, a manifold with non-empty boundary or the graph of a positive semi-flow. In the latter case, one starts from the diagonal at any point $(x,x)$ and the second $x$ is left to follow the positive semi-flow until tending to a critical point $p$. So, $(p,p)$ is a singular point of  $cl(Gr)$
 next to which two other singular strata are visible, namely $(W^s_{loc}(p)\setminus \{p\})\times \{p\}$ and 
 $\{p\}\times(W^s_{loc}(p)\setminus \{p\}) $.
 \bull

\section{Needed transversality }\label{trans}

  We start  Section \ref{trans} recalling the not very standard notion of transversality of  a finite family of smooth maps
  in the setup of sources with conic singularities. 
  Then, we specialize to  the case of the pair $\{\Si^*, \Si\}$  respectively built with the unstable and stable 
 manifolds of positive codimension of the gradient $X^+$. We construct smooth flows on $M$
      with useful properties of transversality with respect to this pair (Proposition \ref{translation-prop}). 
      And we end up this section with the so much desired \emph{skip property} in an infinite sequence
      of diffeomorphisms of $M$ close to $Id_M$. This will be 
      the main tool for getting $A_\infty$-relations from multi-intersecting 
      invariant manifolds.

\begin{defn}\label{transverse-family}
 Let $f_j: N_j\to M$,  $j\in J$, 
  be a finite set of smooth maps from 
manifolds to $M$. The family $\{f_j\}_{j\in J}$ is said to be transverse if, for  every subset 
$K\subset J$, 
  the product map 
\begin{equation}\label{product-maps}
\prod_{j\in K}\, f_j: \prod_{j\in K} N_j\to M^{\vert K\vert}
\end{equation}
is transverse to the \emph{small diagonal} of the target. 

In that case, the fiber product 
$\lim_{j\in J}f_j $ 
is said to be \emph{transversely defined}.\footnote{ 
Here and systematically in this paper, we use the notation $lim$ in the categorical sense; it has the advantage,
in comparison with the fiber product notation,
of noting the involved maps.}  This is a smooth submanifold 
of the  product  $\prod_{j\in J}N_j$.
\end{defn}

Note that in the usual definition one takes  $K=J$. In what follows, without special mention, all spaces of smooth maps
will be endowed with the $C^\infty$ topology. The same definition applies to a family of submanifolds of $M$; the maps  
 to $M$ are then meant to be the inclusions. We apply this notion to 
 submanifolds with $C^1$ conic singularities---See Appendix \ref{appendix-comp} for useful complements.

  Let $G= Di\!f\!f_0(M)$ denote the connected
component of $Id_M$ in the group of $C^\infty$ diffeomorphisms of $M$, equipped with the $C^\infty$ topology.
Obviously, the action of $G$ keeps $\p M$ invariant but not  pointwise fixed. 
 We begin with an exercise of \emph{transversality with constraint}; we solve it by following 
 Thom's idea.\footnote{ In the case of 
    no constraint, Thom gave the proof of the Transversality Theorem in \cite{thom-cob}. 
    Then he discovered that the same proof 
    applies to sections of jet spaces despite the integrability constraint \cite{thom-mex}.} 

\begin{prop}\label{transverse-product} In the setting of Definition \ref{transverse-family}, assume that 
$N_j$ is compact with $C^1$ conic singularities
for every $j\in J$. Let $j_0\in J$ and  $J_0:= J\smallsetminus j_0$. The family 
$\{f_j\}_{j\in J_0}$ is assumed to be transverse. Then,  for a \emph{generic} $g\in G$, 
more precisely for $g$ in some open dense subset of $G$,
the entire family is transverse if $f_{j_0}$ is replaced with $g\circ f_{j_0} $.
\end{prop}

 \proof Since transversality is an open property in $C^1$ topology and $N_{j_0}$ is compact  the set of $g$
 fulfilling the transversality requirements is open in $G$.
 We then focus on denseness.
 
 We give only a sketch of proof since the argument is classical in transversality theory.
Let $K\subset J_0$; set $L_K:= \lim_{j\in K} f_j$ and denote $p_K: L_K\to M$ the canonical map
from the fiber product to $M$. Given $g_0\in G$, we have to show 
that the fiber product $\lim(p_K, \,g\circ f_{j_0})$ is transversely defined
for some $g$ arbitrarily close to $g_0$; actually, it is enough to consider $g_0=Id_M$.
 After this reduction, for short  we set  $f:=f_{j_0}$ 
and $N:=N_{j_0}$.

As usual for proving  a transversality theorem with constraints, it is sufficient 
 to prove that the statement holds when replacing $Id_M$ 
 with a smooth finite dimensional family 
  in $G$ passing through $Id_M$. Indeed,
 Sard's theorem says that, if the statement holds for a smooth family \emph{in the whole}, it holds for almost every element 
 in this family.

Consider the compact set  $A:= p_K(L_K)\cap f(N)$. One covers $A$ by finitely many closed balls $\{B_i\}_{i=1}^q$ of 
$M$ equipped with Euclidean coordinates; and let $B'_i$ a larger ball concentric to $B_i$.
 For a vector $v_i$ in $\R^n$, $n=\dim M$, small enough so that the translated ball
$B_i+v_i$ remains in the interior of $B'_i$, 
one defines $g_{v_i}\in G$ by the formulas
\begin{equation}
g_{v_i}(x)= \left\{
\begin{array}{l}
x+v_i \text{ if } x\in B_i, \\
x\quad\quad  \text{ if } x \text{ lies outside of } B'_i, \\
\text{and some smooth interpolation in the remaining region}. 
\end{array}
\right.
\end{equation} 
Denote by $(\R^n, 0)$ an arbitrarily small neighborhood of the origin in $\R^n$. 
Define 
 $\Ga:  (\R^n, 0)^{\times q}\times M\to M$ by 
 $$\Ga(v_1,...,v_q,x)= \left(g_{v_1}\circ g_{v_2}\circ\cdots\circ g_{v_q}\right)(x).
 $$  
 Its value  is equal to $g_{v_i}(x)$ when $v_k= 0$ for every $k\neq i$. 
    
    For every $x$ in $A$, there exists some $i\in\{1,...,q\}$ so that $x$ lies in $B_i$; here, the partial derivative
    $\left(\partial_{v_i}\Ga\right)(0,...,0,x)$ is of maximal rank $n$.
    Therefore, since $\{pt\}\times M$ is transverse to the diagonal in $M\times M$, one derives that the product map
    $$\left(p_K, \Ga\circ(Id\vert_{(\R^n,0)^q},f)\right): L_K\times \left[(\R^n, 0)^{\times q}\times N\right]\to M\times M$$
    is transverse to the diagonal of $M\times M$ (due to the $n$-dimensional 
    parameters).
    By Sard's theorem, for almost every $(v_1, ..., v_q)\in (\R^n, 0)^{\times q}$, 
    the map $\left(p_K, \Ga(v_1, \ldots, v_q,f)\right)$ is transverse to the diagonal.
    This proves the denseness part of the statement.  The genericity follows as said at the beginning of the proof. 
      ${}$\bull

    We now left generalities and focus on the concrete situation we are interested in.
    
     \begin{notation}Denote by $\Si$ (resp. $\Si^*)$
      the union of the stable (rep. unstable) manifolds of the adapted gradient $X^+$ which have a  
      positive codimension in $M$.
      \end{notation}
      The reason for not taking into account the critical points whose stable (resp. unstable)
       manifold is $n$-dimensional is that 
      transversality to them is automatic;  only transversality to their (singular) boundary is relevant. 
      
      Since $X^+$ is Morse-Smale, we know that $\Si$ has conic singularities \cite{bz}. 
      Moreover, $\Si$ is transverse to the boundary when condition (MMT) is fulfilled, 
      just by looking at the local model.
      Note that the 0-skeleton of $\Si$ is the union of the following subsets: $crit_0f$
      and, for every $x\in crit_0^+f_\p$, the intersection of the one-manifold
      $W^s(x, X^+)$ with $\p M$.
      
      Again, $\Si^*$  is a submanifold with conic singularities. But this time, $\Si^*$ is not transverse to $\p M$. 
      More precisely, the manifold $W^u(x, X^+)$ is tangent to $\p M$ near the critical point $x$ from which it is 
      emanating. The 0-skeleton of $\Si^*$ is the union of the following subsets: $crit_nf\cup crit^+_{n-1}f_\p$\,. 
      Observe that $\Si^{[0]}$ and $\Si^{*[0]}$ are disjoint.
      
      By the Morse-Smale property, $\Si$ is transverse to $\Si^*$, and hence, the union $\Si\cup\Si^*$ 
      is a submanifold with conic singularities (Lemma \ref{union}). That $\Si^*$ is tangent to the boundary
      will  not create any problem if  we declare that  the considered ambient isotopies 
       are neither applied to $\Si^*$ nor $\p M$;
      only the transversality to them is preserved.
     
     The next important definition is given in a more general setup than the pair $\{\Si^*,\Si\}$.     

           \begin{defn}\label{immediate}  Let $K\subset M$ be a submanifold wtih conic singularities transverse to $\Si$.
           A positive semi-flow $(v^t)$, 
      or its  infinitesimal generator $v$, is said to be  
      \emph{of immediate transversality to $\Si$ relative to $K$} if there exists some $\ep>0$ such that 
      $v^t(\Si)$ is transverse to  the family $\{K,\Si\}$  (or equivalently to $K\cup \Si$)
      for every $t\in (0,\ep)$.\footnote{ Rescaling the velocity allows us to take $\ep=1$.}
      \end{defn}

      In contrast to smooth submanifolds, the existence of an immediate transversality flow  is not obvious
       in presence of conic singularities. Fortunately,
      the \emph{translation flows} defined below provide us with a large family in which immediate transversality 
      is a generic property. 
       We now explain how to pass from an ``absolute'' flow of immediate transversality to a relative one.

      \begin{rien} {\sc Strata and tubes.} \label{tubes}
      The stratum $\Si_k,\ k<n,$  is the union $\cup_x W^s(x,X^+)$ for all critical points 
      $x\in crit_k\, f \cup crit^+_{k-1} f_\p$. One chooses:
      \begin{itemize}
       \item A compact domain $\underline\Si_{\,k}\subset \Si_k$ containing all critical points lying in $\Si_k$. 
      \item A compact tubular neighborhood $T_k$ of $\underline\Si_{\,k}$  (it is a trivial 
      $(n-k)$-bundle); one specifies that the fiber of $T_k$ over a critical point $x\in \Si_k$ is the local unstable manifold
      $W^u(x, X^+)$.
      \item A collar $U_k$ of the sphere bundle $ST_k$.
      \end{itemize}
      \end{rien}
      Note that the Morse Model with its so-called simple coordinates  
      and the flow of $X^+$  
      endow $T_k$ with a \emph{canonical} trivialization and each 
      fiber with a \emph{canonical} affine structure.
      These data are subject 
      to the following requirements:
      \begin{enumerate} 
      \item The union $T_0^k:= T_0\cup...\cup T_k$ is a neighborhood of the $k$-skeleton of $\Si$.\footnote{ 
      The $k$-skeleton of $\Si$ is the union  of the $j$-dimensional strata for $j\leq k$ (Definition \ref{skel}).}
      \item The boundary $\p \underline\Si_{\,k}$ is covered by $T_0^{k-1}$  and if 
       $ z\in \p \underline\Si_{\,k} \cap T_j, \ j<k$, then $ z\in U_j$
     and  $z\notin ST_j$. Moreover, if $T_{j,y}$ is the fiber of $T_j$ 
      passing through $z$, then the fiber $T_{k,z}$
     is contained in an $(n-k)$-dimensional affine subspace of $T_{j,y}$.
      \item The intersection $\Si\cap T_k$ is a trivial cone sub-bundle of $T_k$ for the canonical trivialization.
           
       \end{enumerate}

      We now introduce a neighborhood of $\Si$ more manageable  than the union $T_0^{n-1}$ 
      (subsection  \ref{tubes})
      for extending  vector fields to $M$ as we have in mind. 
      
      \begin{notation}\label{prefer} The \emph{preferred neighborhood} of\, $\Si$, noted $V(\Si)$,
       is obtained from  $T_0^{n-1}$ 
      by making slits  
       in the following way: one first removes  a small open \emph{exterior} collar of $ST_{n-1}$
      from $T_0^{n-1}$;
      then a small exterior collar of $ST_{n-2}$ except when it crosses $ T_{n-1}$, and so on until $ST_1$.
      {\rm (see Figure \ref{fig-preferred})}.
       \end{notation}
       Note that any germ of vector fields defined along $V(\Si)$ extends smoothly to $M$ without changing 
           the behaviour of its flow near $\Si$.             
         \begin{center}\label{fig-preferred}
 \begin{figure}[h]
 \includegraphics[scale =.5] {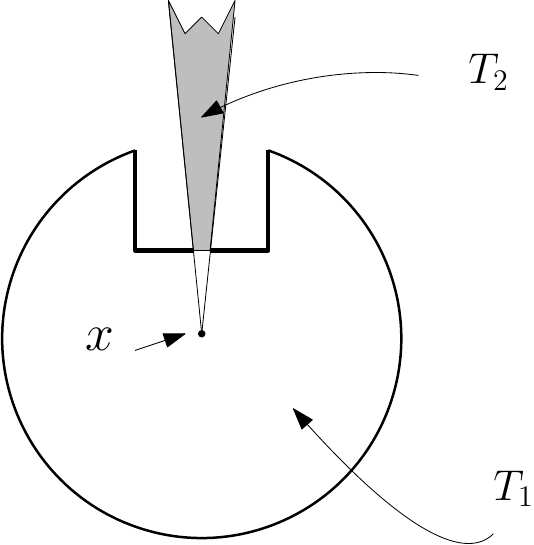} 
 \caption{For $n=3$,  a sectional view of $V(\Si)$ transverse to $\Si_1$; only one strata of $\Si_2$ adheres to 
 $\Si_1$; the sectional view of $T_2$ is in gray.}
 
 \end{figure}
 \end{center}

 \begin{defn} \label{quasi-translation}
       A germ of diffeomorphism $\vp$ is  said to be a \emph{quasi translation} if,
       in the preferred neighborhood  $V(\Si)$
       and for each tube $T_k$, it is a translation in each fiber of $T_k\cap V(\Si)$ except over a small collar of 
       $\p\underline\Si_{\,k}$, the boundary
       of the restricted $k$-stratum. Over there, 
       namely on the domain of 
       reduction process (subsection \ref{re-process}), $\vp$  is the time one-map of the vector field yielded 
       by the  so-called \emph{balanced reduction
       formula} from the fiberwise translations of the tubes $T_j$, $j<k$, defining $\vp$.
        \end{defn}
        
        Note that, by construction, a quasi translation is the time-one map of a vector field (which we term alike.)
         Moreover, there are sufficiently many quasi translations so that the transversality theorem to submanifolds with 
         $C^1$ conic  singularities holds.     
      
      \begin{prop}\label{translation-prop} Let $K\subset M$ be a compact
       submanifold with $C^1$ conic singularities transverse to $\Si$.
      Then the following holds for some real numbers  $\de>\ep'>\ep >0$:
       \begin{enumerate}
      \item 
       There exists a quasi translation flow $(u^t)$ which is of immediate transversality  to $\Si$: 
      for every $t\in (0,\de)$, $u^t(\Si)$ is transverse to $\Si$.
         \item The generator $u$ of such a flow 
         may be generically 
         $C^1$ approximated\footnote{\label{ft-generic-approx} 
         Here, ``generically" means 
         that these approximations form a countable intersection  of dense open subsets in every neighborhood of $u$.}
        by $v$ generating  a quasi translation flow 
         of immediate transversality to the pair $\{K,\Si\}$, or equivalently, 
       for every $t\in (0,\ep')$ the image $v^t(\Si)$ is transverse to $K\cup \Si$.   
      \item{\rm (transversality-to-path)} Given such a flow $(v^t)$ and any
      $0<t_1<t_2<\ep$, the submanifold $v^{t_2}(\Si)$ is transverse to the pair
      $\{K,v^t(\Si)\}$  
        for every $t\in [0,t_1]$, that  equivalently reads \hfill \break$v^{t_2}(\Si)\pitchfork (K\cup v^t(\Si))$.   
        \end{enumerate}
      \end{prop}
      
      It is worth noting that  transversality-to-path is a very rare property. Indeed,
        an isotopy of $(\Si_t)_{t\in [0,1]}$ being given, 
      in general there is no image of $\Si_0$ transverse to every $\Si_t, \ t\in [0,1]$. 
      The third item is aimed for an iterative version of the present one (see Proposition \ref{flows-sequence}.)   \\
      
      \proof

      \nd{\bf (1)} We are going to prove this statement  by  constructing translation flows $v_k$ on each tube $T_k$   
      inductively on $k$ from $k=0$ to $k=n-1$. 
      Then, we will discuss  the gluing near the sphere bundle $ST_k$, more precisely inside the collar $U_k$  
      (notation from subsection \ref{tubes}.)\\

      \nd {\sc case $k=0$.} The intersection $\Si\cap ST_0$ is a compact submanifold with $C^1$ conic singularities.
      Let $C:= \Si\cap T_0$ be the cone based on it (in each arcwise component of $T_0$.)
      By Corollary \ref{trans-cones}, almost every vector $u\in\vec\R^n$ generates a   translation flow of 
      immediate transversality to $C$. 
      
      Let $u$ be such a vector. We are going to apply the \emph{reducing process} 
      which is explained in subsection \ref{re-process}
       in order to make $u $ fit all $k$-dimensional strata of $\Si$ entering $T_0$.
      Let $\Si_k$ be the $k$-dimensional stratum of $\Si$ and $\underline\Si_{\,k}$ be its selected compact sub-domain. 
      By choice of the fibered structure of $T_k$, 
      for $x\in\underline\Si_{\,k}\cap T_0$ the 
      fiber $T_{k,x}$ with its canonical affine structure 
      is an affine subspace of $T_0$. Moreover, for $\ell>k$ and $y\in \Si_\ell \cap T_{k,x}$, the fiber  $T_{\ell,y}$ 
      is an affine subspace in $T_{k,x}$.
      For every $x\in\underline\Si_{\,k}\cap T_0$ one decomposes 
      \begin{equation}
      u= u^k_\frak h(x)+ u^k_\frak v(x)
      \end{equation}
      into its \emph{horizontal}  component $u^k_\frak h(x)$
      tangent to $\Si_k$ and its \emph{vertical} component tangent to the fiber over $x$. The same 
      decomposition is carried to each point $y$ in $T_{k,x}\cap  T_0$ by parallelism of the affine structure of  $T_0$. 
      This is the \emph{connection} induced  on $T_k$ by the affine structure of $T_0$.
      
      Let $W_k\subset \Si_k$ be a collar neighborhood of $\p\underline\Si_{\,k}$ and $E_k$
       be the part of $T_k$ over $W_k$.
      Let $\mu: W_k\to [0,1]$ be a smooth function, named the \emph{balancing function},
       equal to 1 near $\p\underline\Si_{\,k}$ and equal to 0 near the opposite side; it is lifted to $E_k$ by the 
       projection $E_k\to W_k$. The \emph{balanced reduction} of $u$ to $T_k$ is defined as follows 
       for every $y\in E_k\cap T_0$:
        \begin{equation}
       u^k_\mu(y)=\mu(y) u^k_\frak h(y) +u^k_\frak v(y). 
       \end{equation}
      By Proposition \ref{balanced}, 
      the vector field $u^k_\frak v$ generates a translation flow of immediate 
      transversality to $\Si\cap T_k$. 
      By subsection \ref{assox}   and the slits which have been
       made for getting
      the preferred neighborhood $V(\Si)$ from $\cup_0^{(n-1)}T_j$, the different balanced reductions appearing 
      in $T_0$ yield together a well defined vector field in $T_0\cap V(\Si)$ once $u$ is chosen. It generates a 
      quasi translation flow
      of immediate transversality to $\Si$ if $u$ does (Proposition \ref{balanced}.)\\
      
       \nd {\sc Induction step.} By abuse, 
        we neglect the domains of balanced reduction.
       One considers the tube $T_k\subset M$  trivially fibered over $\underline\Si_{\,k}$ and endowed with
       the trivial cone subbundle $\Si\cap T_k$.
       
       By induction assumption, we are given a section $u^k_\p$ of the vector bundle underlying  
       $T_k$, defined near $\p\underline\Si_{\,k}$,
       and seen as  generating a translation flow in each fiber. It is
       assumed to generate a flow of immediate transversality to $\Si\cap T_k$. By Proposition \ref{cone-family},
       there is a dense open set of sections of $T_k$ over the whole $\underline\Si_{\,k}$ which generate 
       a flow of immediate transversality to $\Si\cap T_k$; the openness guaranties that some of them extend
       the germ of $u^k_\p$.
       
       To complete the induction argument, we have to apply a reduction process to every $T_\ell, \ \ell>k$. This can be 
       done by applying the reduction process, as explained when $k=0$, in each $(n-k)$-disc fiber of $T_k$. Here,
       one should specify that for getting the desired immediate transversality around a fiber which shows 
       the coplanarity phenomenon (see Corollary \ref{trans-cones})
        one has to use a slight generalization of Proposition \ref{balanced} which takes 
       into account the derivatives with respect to the base of the bundle $T_k$ (see Condition (\ref{pitch'}) below.)
       
       Note also that the balancing function attached to $\ell$ is already defined on the occasion 
       of the necessary passage of $\p\underline\Si_{\,\ell}$ in $T_0$. So, it only depends on the chosen 
       section $u^k$ of $T_k$. \\ 
               
         \nd{\bf (2)} Since transversality of $u^t(\Si)$ to $K$ holds for every $t$ in some open time interval containing 0, 
         what is missing after item (1) is the transversality of $u^t(\Si)$ to $K\cap\Si$ in some small interval $(0,\ep')$.
         One looks at this question successively in each tube $T_k$ while first neglecting the reduction 
         processes. 
         
         In $T_0$, 
          the issue  consists of adding some more non-coplanarity conditions, namely those involving strata of $\Si$ and
         $K\cap\Si$. Here, it should be noted 
         that since $K$ is transverse to $\Si$ surely $K\cap T_0$ 
         is not a cone (due to the vertex  of $\Si$ in each connected component of $T_0$.) But
         {\it a fortiori,} the desired requirement will be fulfilled if $K\cap (\Si\cap T_0)$ is replaced 
         with its cone in each component of $T_0$.
         So, by Corollary \ref{trans-cones} the desired transversality holds for an open dense set of translations, 
         in particular it holds for an approximation of $(u^t)$.
         
         In $T_k, \ k>0$,  one applies the same trick in each fiber: 
         $K\cap (\Si\cap T_{k,x})$ 
         will be replaced with its cone
          at $x$, noted $c_xK$ for short.
         By compactness of $K\cap T_k$,
          for  $x\in \underline\Si_{\,k}$ the cone $c_xK$ varies upper semi-continuously. Therefore, 
          Proposition \ref{cone-family} may be sligthly generalized even if the family $\{c_xK\}_{x\in\underline\Si_{\,k} }$
          is not a product. For completeness,  we make explicit what  replaces  condition (\ref{pitch}), that is 
           the condition for a family of translations
          $\{u^\theta(x)\}_{x\in\underline\Si_{\,k} }$ to generate a flow of immediate 
          transversality of $\Si$ to $\cup_x\bigl(c_xK\bigr)$. One of the  following two conditions has to be fulfilled.
          \begin{equation}\label{pitch'}
         \left\{
\begin{split}
&\text{ -- The translation }u^\theta(x)\text{ maps }(\Si\cap T_{k,x})\text{ transversely to }c_xK \text{ in }
\{x\}\times \B^{n-k}.\\
&\text{ -- For every hyperplane }H\text{ in }\B^{n-k}\text{ bitangent to }\Si\text{ at  }y\text{ and 
to }c_xK\text{ at }y+u^\theta(x),\\ 
&\quad\text{ the operator }\p_{\Si_k} u^\theta\vert_{x}\text{ maps the tangent space  }T_{x}\Si_k\times\{0\}\text{ transversely to the}\\
& \quad\text{ codimension-one space  }T_x\Si_k\times H.
\end{split}
\right.
         \end{equation}
         As in Proposition \ref{cone-family}, the set of translations in $T_k$ fulfilling condition (\ref{pitch'})
         is open and dense in the set of all  translations in $T_k$. So, after exhausting all tubes, the flow 
         of  $(u^t)$ from item (1) may be approximate
         to satisfy the new requirement dealing with $K$.
         
                  Over the domains of \emph{reduction process}, one needs a version of Proposition \ref{balanced}
         relative to $K$ and its fibered version based on condition (\ref{pitch'}). Its proof is similar.\footnote{ Introduce the 
         quantitative transversality
         of $T_yC$ to $T_{y+tu}(cone(K\cap C))$; and the reasoning may be led similarly.}
                
                ${}$\\
       \nd{\bf  (3)} If $K=\emptyset$, the statement follows directly from the one-parameter group formula of flows; indeed, 
       for every $t\in [0,t_1]$ the diffeomorphism $u^t$ maps $\Si$ to $u^t(\Si)$ and 
       $u^{t_2-t}(\Si)$ to $u^{t_2}(\Si)$ while carrying mutual transversality. 
       
       If $K\neq \emptyset$, this is more subtle since the isotopy of $t\mapsto v^t(\Si)\cap K$ 
       is {\it a priori} not defined by an
       autonomous flow. We are going to use the following elementary fact: 
       when $v^t(\Si) $ is both transverse to $\Si$ and $K$ one has 
        $v^t(\Si)\pitchfork ( \Si\cap K )\Leftrightarrow (v^t(\Si)\cap \Si)\pitchfork K$.
         
     The non-coplanarity conditions involving $K$, namely conditions (\ref{pitch'}), are open 
     in the $C^1$ topology. Then, a quasi translation flow $(v^t)$ being chosen  
     which satisfies (\ref{pitch'}) 
     at $t=0$ (that is, for $v^t(\Si)=\Si$)
     still satisfies it for a while. 
     More precisely, we are knowing by item (2) that $ v^{t'}(\Si)\cap\Si$ is transverse to $K$ for every $0<t'<\ep'$. 
     By the above-mentioned openness there exists a positive $\ep <\ep'$ such that for every $0<t<t+t'<\ep $
     we have 
     $v^t(\Si)\cap v^{t+t'}(\Si) $ transverse to $K$.\,\footnote{
     In case $(v^t)$ is a translation flow the upper bound for $t+t'$ has the form  $\ep'$ minus a positive 
     linear function of the upper bound $\ep$ of $t$  (the first time where $v^t(\Si)\cap v^{t+t'}(\Si) $ 
     is not transverse to $K$.) Then it holds
     with a common upper bound of $t$ and $t+t'$. For a quasi translation flow, it is similar.}  Item (3) follows. .\bull
       

      We now give an iterative version of Proposition \ref{translation-prop}. It will serve for the 
      forthcoming \emph{skip property} which is the key point to get a proof of the $A_\infty$ relations. 
      
      \begin{prop} \label{flows-sequence}
      
      We set $\Si^{-1}:=\Si^*$,  $\Si^0:= \Si$. 
      Then there are an infinite sequence $v_0, v_1, v_2, ...$ of vector fields which generate  quasi translation flows 
      $(v_k^t)$,
      $k=0, 1, 2 ...$, 
      and an infinite sequence of times 
      $0=t_0<t_1<t_2< ... $, 
      fulfilling the next inductive conditions  where we set 
      $\Si^{k+1}:= v_k^{t_{k+1}-t_{k}}(\Si^k)$ when $k\geq 0$: 
        \begin{enumerate}
     
      \item[$(1)_k$] For $k>0$, the vector field $v_k$ is a generic $C^1$ approximation of $v_{k-1}$.
       
       \item[$(2)_k$] For every integer $0\leq j<k$ and every $t\in [ t_j, t_{j+1}]$, the unions
       $ \Si_{-1}^{j-1}:=\Si^{-1}\cup \Si^0\cup...\cup \Si^{j-1}$ and 
       $\Si_{j+2}^{k+1}:=\Si^{j+2}\cup ...\cup \Si^{k+1}$ 
       are both transverse unions,\footnote{``transverse union'' means the family of the entries of the union 
       is a transverse family.} and the family
     $\{ \Si_{-1}^{j-1}, v_j^{t-t_j}(\Si^j),\Si_{j+2}^{k+1}\}$ is transverse. 
       
       \item[$(3)_k$]  For every 
     $k\geq 0$ and every $t\in[ t_k, t_{k+1}]$, 
     we have $v_k^{t-t_k}(\Si^k)\pitchfork \Si_{-1}^{k-1}$. 
     Moreover $\Si^{k+1}$ is transverse to $\Si_{-1}^k$.
     
         \end{enumerate}
      \end{prop}
      
     \proof  
     The vector field $v_0$ is just the $v$ from Proposition \ref{translation-prop} which generates a flow of 
     immediate transversality to $\Si^0$ relative to $\Si^{-1}$. 
     For $t_1$ positive and small enough, $\Si^1:= v_0^{t_1}(\Si^1)$  is transverse to 
     $\Si^{-1}\cup \Si^0$ by item (2) in Proposition \ref{translation-prop}.
     Let us explain how the vector field $v_1$ and the time $t_2$ are chosen; then the same process will be applied 
    repeatedly. 
    
    We try to continue with $v_0$ and choose a time $t_2$ so that 
    $v_0^{t-t_1}(\Si_1)$ is still transverse to  $\Si_{-1}\cup \Si_0$ for every $t\in[t_1, t_2]$. Such a time $t_2$ exists
     since  this property holds at time 
   $t_1$  and transversality to a fixed compact submanifold with conic singularities
    is  open in the $C^1$ topology. Moreover, the same holds for 
   every vector field in a small  $C^1$ neighborhood 
   of $v_0$. 
    
     By  
     the transversality-to-path property
      that $v_0$ fulfills, $\Si'_2:= v_0^{t_2}(\Si^0)$ is transverse  to the family $\{\Si^{-1},v_0^t(\Si^0)\}$ for every 
    $t\in [t_0, t_1]$. Observe that this property of $\Si'_2$ is also  $C^1$ open 
   since $[t_0,t_1]$ is compact.  
   So, it is shared by all elements 
   in some open ball $\mathcal B_2$
   centered at $\Si'_2$  in the space of submanifolds of $M$ with conic singularities.
   
    Choose $v_1$ close enough to $v_0$ so that  $v_1^{t_2-t_1}$ 
     maps $\Si_1$ to an element in $\mathcal B_2$. 
    By the choice of $t_2$, we still have   $v_1^{t-t_1}(\Si^1)$ transverse to $\Si^{-1}\cup \Si^0$ for 
    every $t\in [t_1,t_2]$; and $\Si^2:= v_1^{t_2-t_1}(\Si^1)$ transverse to $\{\Si^{-1}\cup v_0^t(\Si^0)\}$
    for every $t\in [t_0,t_1]$
    
    The new
    requirement, not satisfied by $v_0$,  is that $\Si^2$ is transverse to the triple $\{\Si^{-1}, \Si^0, \Si^1\}$, 
    or equivalently,
    $\Si^2\pitchfork \left(\Si^{-1}\cup \Si^0\cup \Si^1\right)$. 
    This generically holds among the approximations 
    of $v_0$ by  Proposition \ref{translation-prop} (2)---the latter being 
    applied with $K= \Si^{-1}\cup \Si^0$ and $\Si^1$ in place of $\Si$. 
    This completes the proof of the present proposition for $k=1$.

    For the induction, one notes that the properties stated in items $(2)_k$ and $(3)_k$ are 
    open with respect to all data entering them. 
    Assume $(1)_j$ -- $(3)_j$ are valid up to $j=k-1$.
    To the induction assumptions we add the existence of a decreasing sequence of open balls $\mathcal B_2\supset 
    \mathcal B_3\supset ...\supset \mathcal B_{k}$, with $\Si^j\in \mathcal B_j$,
     where every element of $\mathcal B_{k}$ (in place of $\Si^k$)
    fulfills $(2)_{k-1}$.
    
    So, $v_{k-1}$, $t_k$, $\Si^{k}$ and $\mathcal B_k$ are known; we have  
    $\Si^{k}= v_{k-1}^{t_k-t_{k-1}}(\Si^{k-1})$ which belongs to  $\mathcal B_k$. 
    One extends this flow up to a time 
    $t_{k+1}>t_k$ such that $\Si'_{k+1}:=v_{k-1}^{t_{k+1}-t_k}(\Si^k)$ still belongs to $\mathcal B_k$. 
     The ball $\mathcal B_{k+1}$ is centered at $\Si'_{k+1}$, 
    small enough for being included in $\mathcal B_k$ and
    such that each of its elements---in place of $\Si'_{k+1}$---fulfills the transversality conditions $(2)_k$.
    
     As we did when $k=1$, by  Proposition \ref{translation-prop} one may choose
      a generic  $C^1$ approximation $v_k$ of $v_{k-1}$ so that it generates 
     a  flow immediately transverse
     to $\Si^k$ relative to $\Si_{-1}^{k-1}$. In particular, $\Si^{k+1}$ is transverse to $\Si_{-1}^k$.
     One checks  conditions $(1)_k -(3)_{k}$ are valid,
     and hence, the proposition holds recursively. 
     ${}$ \bull

      \begin{defn} \label{Not3.11} 
      Let ${\bf g}_k:=( g_1, \ldots , g_{k})$  be a finite sequence 
      of $k$ elements in group $G:=Di\!f\!f_0(M)$. 
      This sequence is said to be \emph{transverse} if the family 
      $\{\Si^*, \Si,g_1(\Si),\ldots, g_{k}(\Si)\}$ is transverse in the sense of Definition \ref{transverse-family}.   
        This property will be noted  $P_k\subset G^{\times k}$.\footnote{ We  identify $G^{\times k}$
         with the set of sequences of $k$ elements in $G$.}  
         An infinite sequence in $G$  is said to be transverse if every finite subsequence is transverse.
         \end{defn}
         
         By iteration of Lemma \ref{lemma-conic}, $P_k$ is a $C^1$ generic property.
         This is also an open property by the compactness of $\Si$ and $\Si^*$.

         \begin{defn} \label{def-skip}  A sequence $ (g_1, \ldots , g_k)\in P_k$ is said to
         have the \emph{skip} property  if for every $1\leq j\leq k$ there is given a path $(g_j^t)_{t\in [0,1]}$
         from $g_j^0=g_{j}$ to $g_j^1=g_{j-1}$ such that for every $t\in [0,1]$
         the  sequence $(g_1, \ldots, g_{j-2},g_{j}^t,g_{j+1},\ldots, g_k)$ is transverse, that is, it lies in $P_{k-1}$.
         Here, it is meant that $g_{j-1}=Id_M$ when $j=1$.
         \end{defn}
         
         One should say that the skip property is a subset $P_k^{skip}$ in $P_k^{[0,1]}$.
        By the compactness of $[0,1]$,
          the skip property is $C^1$ open: it is preserved by perturbation in the $C^1$ topology   
          of the elements in $P_k$ and their associated paths.

          If a sequence has the skip property any consecutive subsequence is so since the latter has less 
          transversality requirements. Therefore, by induction on $r$ we get the following.
          
         \begin{cor}\label{Cor3.13}
          If $r$ terms are removed from a sequence $(g_1,\ldots,g_k)$ which has the skip property
        then the resulting sequence is isotopic to $(g_1, \ldots, g_{k-r})$ in $P_{k-r}$.
        \end{cor}
        
        The existence of sequences with the skip property is stated and proved below.
        
          \begin{prop}  \label{G-transverse} 
         There exists an infinite sequence $(g_0= Id_M, g_1, g_2, \ldots, g_k,\ldots)$ in $G$  
         which has the skip property.
        \end{prop}

\proof This is a direct application of Proposition \ref{flows-sequence}. The latter provides us with 
sequences of flows $(v_0)^t, ..., (v_{k-1})^t, ...$ and times $t_1, t_2, ...$. Then we set $g_1= v_0^{t_1}$, 
$g_2=v_1^{t_2-t_1}v_0^{t_1}$ 
and so forth. The required isotopy $g_j^t$ consists just to follow the flow $(v_{j-1}^t)$ backwards from the time $t_{j}$
to the time $t_{j-1}$.
\bull


\section{Multi-intersections towards $A_\infty$-structures}\label{towards}
We now turn to  $A_\infty$-structures for which we refer to B. Keller \cite{keller}. In
(\cite{fukaya
}),  K. Fukaya had proposed 
 the construction of such structures on the Morse complexes of a closed manifold. We adapt his ideas to the case
 where $M$ is a manifold with a non-empty boundary. 
 
 The main point is to 
 describe multi-intersections by {\it trees}.
First, we are going to define the trees under consideration, that we name {\it Fukaya trees}.\footnote{ 
We name these trees Fukaya trees, instead of Morse trees, for two reasons. First one speaks today of the Fukaya 
Morse  theory and second we emphasize that the time of the considered flows is never involved in our approach.} 
We emphasize that no length is attached to the edges.

For us, a Fukaya tree $T\in \mathcal T_d$ is just a combinatoric object which will be used to
construct some (non-proper) submanifolds in
products of $M$ by itself $k$ times, noted $M^{\times k}$. These will be \emph{transversely defined}
(in the sense of Definition \ref{transverse-family}) and their  closure will have conic singularities.  
These manifolds will depend on the chosen {\it decoration} of $T$.
 Finally, if $\g_\infty$ is a convenient infinite sequence in $G$, a \emph{$\g_\infty$-standard} decoration of $T$  
 will allow us to define (multi)-intersection numbers.\\


\begin{defn} \label{tree}Let $d$ be a positive integer. 
A Fukaya tree $T$  of order $d$ 
is a  finite \emph{rooted} planar tree with $d$ leaves 
which are totally ordered.
\end{defn}

This may be thought of as an isotopy class of proper embeddings into the closed unit disc $\D$.
The end points of $T$
(the root and the leaves) lie in 
$\partial \D$; the leaves are ordered clockwise in the complement of the root  in $\p \D$. 
 By a vertex we mean 
 an \emph{interior} vertex; it is required to have a valency greater than 2. 
An  edge is said to be \emph{interior} if  its two end points are vertices.

  Each edge is oriented from
 the root to the leaves. If $v$ is a vertex, the edges which have $v$ as origin are the \emph{branches} of $T$
 at $v$.
The edge starting from the root is named the \emph{trunk}; it is noted $e_{root}(T)$. Its upper end point will be 
noted $v_{root}(T)$.

 Let $\mathcal T_d$ be the finite set of Fukaya $d$-trees. 
 Though there is no topology on $\mathcal T_d$, 
  a Fukaya tree will be said to be \emph{generic} if every vertex 
has valency 3; 
of \emph{codimension-one} if all vertices have valency 3 except one which has valency 4. 

\begin{defn} \label{subtree}
${}$

\nd {\rm 1)} The ordered set of leaves in a Fukaya tree $T$ is denoted by $L(T)$.
Let  $T_0$ and $T_1$ be two Fukaya trees. A Fukaya embedding $j:T_0\to T_1$
is an  injective, non surjective, simplicial map which sends $L(T_0)$ to a consecutive subset of 
$L(T_1)$ increasingly.
 The image $j(T_0)$ is called a Fukaya subtree of $T_1$.
 
 \nd{\rm 2)} If $T_0$ is a Fukaya subtree of $T_1$, the Fukaya tree obtained by erasing 
 $T_0$ except its trunk is called the \emph{quotient} tree of $T_1$ by $T_0$ and is denoted by $T_1/T_0$.

\end{defn}

Topologically, $T_1/T_0$ is really a quotient since all edges  above the trunk of $T_0$ are identified to one point, 
namely $v_{root}(T_0)$. Moreover, $T_1/T_0$ is canonically a Fukaya tree. If $T_1$
is represented in $\D$
 there is a unique way up to isotopy to put $v_{root}(T_0)$ 
on $\p\D$ while keeping the other leaves fixed.

\begin{rien}{\bf Labelling vertices and edges.}\label{standard} 
${}${\rm

  Given a tree $T\in\mathcal T_d$,   a vertex is labelled $v_{i,j}$ if the leftmost 
 (resp. rightmost) 
 ascending monotone path starting from it in $T$
 reaches the $i$-th leaf (resp. the $j$-th leaf). If $h$ is 
the maximal number of edges in a path ascending from $v_{i,j}$ to a leaf, 
 this vertex is said to be of \emph{generation} $h$.
 
 An edge $e\subset T$ with an origin $v$ is labelled $e_h^j(T)$ (or 
 $e_h^j$ if no possible confusion) if the following holds.
 \begin{itemize}
 \item The leftmost monotone ascending path containing $e$ and starting from $v$
 terminates in  the $j$-th leaf.
 \item  $h$ is the maximal number of edges in every 
 ascending path from $v$ containing $e$. Then, $e$ is said to be of \emph{generation} $h$.
 \end{itemize}
  }
 \end{rien}

Keeping only the edges of generation less than $h+1$ and duplicating if necessary the vertices of generation
$h$, we get a collection of disjoint Fukaya subtrees. 
This can be seen as embedded in the upper half-plane, the roots being ranked in a precise order 
  on the horizontal axis. These trees form a \emph{forest of height $h$}.

 \begin{center}
 \begin{figure}[h]
 \includegraphics[scale =.6]{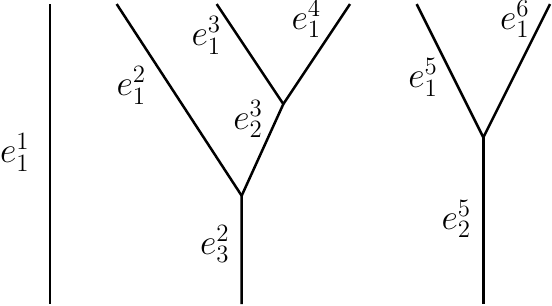}
 \caption{A height-3 forest with 6 leaves}\label{standard-fig}
 \end{figure}
 \end{center}

 \begin{defn}  \label{decoration1}  
 Let $\g_\infty:=(g_1, g_2, ...)$ be any infinite sequence 
 in $G$.
The $\g_\infty$-\emph{standard decoration} $\mathcal D_{\g_\infty}$ of a Fukaya tree $T$ 
with $d$ leaves
consists of  
a collection of  $d$ vector fields of the form $X_j:=(g_{j-1})_*X^+$, $j= 1,\ldots ,d$, with $g_0=Id_M$.
The vector field  $X_j$ decorates the edge $e_h^j$, independently of $h$.
  \end{defn}

  The vector field $X_j$ is an adapted gradient of the function $f_j:=f\circ (g_{j-1})^{-1}$.
 The reason for moving the critical points, and hence $f$, is this.
 If the dimension of $W^s(x)$ is smaller than the half of $\dim M$ 
 there is no approximation 
 $X'$ of $X^+$ fixing  the zero $x$ 
 and putting  $W^s(x, X')$ transverse to $W^s(x, X^+)$.

 \begin{rien} \label{multi} {\bf Multi-intersection modelled on  $T$: a set theoretical construction.}
 \label{muti-settheo}
  ${}$
 
 {\rm
 We are given a \emph{generic}  Fukaya $d$-tree $T$ 
 and $d$ {\it entries} $(x_1,\ldots, x_d)$ where each 
 $x_i$ belongs to $crit f\cup crit^+f_\partial$,  with possible repetitions. 
 The entries decorate the leaves of $T$ clockwise. The edges are decorated by the $g_\infty$-standard decoration
 $\mathcal D_{\g_\infty}$.
  We aim to construct, by means of a precise recipe, a smooth submanifold 
  \begin{equation}
 I(T,\mathcal D_{\g_\infty}, x_1,\ldots x_d)\subset M^{\times n(T)} \text{ where } n(T)= d-1
 \end{equation}
 This set will be called the {\it multi-intersection modelled on  $T$} or the $T${\it -intersection} of the given entries
 with respect to the given decoration.
 
 Note
that $n(T)-1=d-2$ is equal to  the number of interior edges. 
The reason for this dimension will appear along the inductive construction. We first give the construction
of the $T$-intersection in the \emph{Set} category;  smoothness will be discussed later on.
}
\end{rien}

\nd{\sc Scheme of the induction.}
 It consists of associating some subset (of a certain product $M^{\times k}$) with each edge
and each vertex of $T$ in the order specified below---for brevity, neither the decoration nor the entries are noted. 
Define inductively the following subsets (depending on the chosen entries): 
\begin{enumerate}
\item $W^s(e_1^j)\subset M$ for the edge  $e_1^j$ which ends at the $j$-th leaf of $T$, $j= 1, \ldots, d$. 
\item  $I(v)\subset M$ for every generation-one vertex; such a vertex reads $v=v_{j,j+1}$ for some $j$.
\item $W^s(e_2^j)\subset M\times M$ for every generation-2 edge $e_2^j$ of $T$.\footnote{ This item is just for the comfort of the reader.}

\item The \emph{multi-intersection}  $I(v)\subset M^{\times n(v,T)}$ for every vertex of generation $h>1$ where 
$n(v,T)-1$ is the number of interior edges of $ T$ above $v$. 

\item The \emph{stable set} $W^s(e)\subset M^{\times n(e,T)}$ for every edge of generation $h+1$
where $n(e,T)=n(v,T)+1$ if $v$ denotes the upper vertex of $e$. Finally,
 $n(e,T)-1$ is the number of interior edges above the lower vertex of $e$.
\end{enumerate}
{\it These fomulas will hold whatever the valency of the vertices.}\\

\nd{\sc Step 1.} The edge $e_1^j$  is decorated with the vector field $X_{j}=\left(g_{j-1}\right)_*X^+$ from the 
decoration $\g_\infty$-standard decoration. 
 The entry $x_j$ determines the zero $x'_j:= g_{j-1}(x_j)$ of $X_j$. One defines:
\begin{equation} \label{step-1}
W^s(e_1^j):= W^s(x'_j,X_{j}),\text{ that is, the stable manifold of }x'_j\text{ with respect to } X_{j}.
\end{equation}
${}$\\

\nd{\sc Step 2.} Such a vertex  $v\in T$ is the common vertex of edges $e_1^j$ and $e_1^{j+1}$. We set 
\begin{equation}\label{step-2}
I(v):= W^s(e_1^j)\cap W^s(e_1^{j+1}),\text{ a subset of }M.
\end{equation} 
Here, $n(v,T)=1$ as announced in item (4). 
${}$\\

\nd{\sc Step 3.} Let $e_2^j$ be any edge in $T$ of generation 2, that is, $e_2^j$ is the trunk of a subtree 
of $T$ with two leaves which are numbered $j$ and $j+1$.
 Its upper vertex $v$ is interior to $T$.
By Step 2, we have $I(v)\subset M$. The edge $e_2^j$ is decorated by 
$X_j$; its flow is $\bar X_j$. One takes the 
graph of this flow $Gr(\bar X_j)\subset M\times M$ and its two maps $\si_j$ and $\tau_j$, 
respectively the source and target map. We define the {\it  stable set} $W^s(e_2^j)$ as the following fiber product 
\begin{equation}
W^s(e_2^j)= \lim\left\{
Gr(\bar X_j)\mathop{\longrightarrow}\limits^{\tau_j}M\hookleftarrow I(v)\right\}.
\end{equation}
Here,  $n(e,T)=2$ as announced.\\
\begin{center}
\begin{figure}
\includegraphics[scale=0.6]{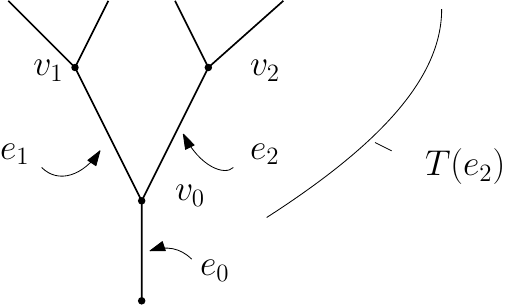}
\caption{}\label{stable}
\end{figure}
  \end{center}
\nd{\sc Step 4.} Let $v_0$ be a vertex of generation $h$. It is the origin of  two edges $e_1, e_2$ 
ending at $v_1$ and $v_2$ respectively (Figure \ref{stable}); at least 
one of these edges is of generation $h$ and the other one is not of higher generation. 
Let $X_{j_{_1}}$ and $X_{j_{_2}}$ be their respective decorations.

For the induction, assume that for every interior edge $e$ of generation less  than $h+1$,  
the \emph{stable set} $W^s(e)$ is already defined as a subset 
 of $M^{\times n(e,T)}$. 
 In particular, $W^s(e_1)$ and $W^s(e_2)$ are subsets
in their  respective products $M^{\times n(e_1,T)}$ and $M^{\times n(e_2,T)}$.
Then, we define the \emph{multi-intersection} $I(v_0)$
by the following fiber product
\begin{equation}\label{induc-4}
I(v_0):= \lim\left\{W^s(e_1)\mathop{\longrightarrow}\limits^{\si_{j_{_1}}} M\mathop{\longleftarrow}\limits^{\ \si_{j_{_2}}}
W^s(e_2)\right\}.
\end{equation}
One checks that this fiber product is contained in the product of a number of factors of $M$ equal to the total
number of interior edges above $v_0$. This is the announced formula. 

For the next step, note that the first projection 
$p_1: M^{\times n(v_0,T)}\to M$ restricted to $I(v_0)$ is just the common value of 
$\si_{j_{_1}}$ and $\si_{j_{_2}}$.\\

\nd{\sc Step 5.}\label{induc-5} Now, consider the edge $e_0$ ending at  $v_0$ from Figure \ref{stable}. It is decorated
with $X_{j_{_1}}$ by definition of a $\g_\infty$-standard decoration.
Actually, this is an arbitrary edge of generation $h+1$. As in Step (3), 
its \emph{stable set} is defined by the following fiber product 
\begin{equation}
W^s(e_0)= \lim\left\{Gr(\bar X_{j_{_1}})\mathop{\longrightarrow}\limits^{\tau_{j_{_1}}}M\mathop{\longleftarrow}
\limits^{p_1}I(v_0)\right\}.
\end{equation}
Again, the ambient product of M by itself has the announced number of factors, namely $n(e_0)= n(v_0)+1$.
This completes the induction argument
and the set theoretical construction.

In particular, we can define the $T$-intersection by
\begin{equation}
 I(T,\mathcal D_{\g_\infty}, x_1,\ldots x_d)=I(v_{root})\subset M^{\times n(T)}
 \end{equation}
That solves the problem raised in the beginning of subsection \ref{muti-settheo}, at least in the {\it Set} category.

\bull

 \begin{remarque} {\rm What we have just explained   works as well for every Fukaya trees, 
 not only the generic trees. Only the fiber product diagrams have more arrows.}
 \end{remarque}

 \begin{rien} \label{smooth}{\bf Smoothness of multi-intersections and stable sets.} ${}$
 
{\rm  Recall $\Si$ is the union of stable manifolds $W^s(x, X^+)$ where $x$ ranges over 
 $crit_* f\cup crit_{*-1}^+f_\p  $ with $*<n$. 
 And $\Si^*$ is the union of unstable manifolds $W^u(x,X^+)$ where $x$ ranges over $crit_* f\cup crit_{*-1}^+f_\p  $
 with $*>0$. In both cases, every stratum is of positive codimension.}
 \end{rien}
 
 \begin{defn} \label{requirements} A sequence $\g_d$ of $d$ elements in $G$ is said to be \emph{admissible}
 if it satisfies the following.
 \begin{enumerate}
 \item The family $\{\Si^*, \Si, g_1(\Si),\ldots, g_d(\Si)\}$ is a transverse family.
 \item For every Fukaya tree $T$ with $d+1$ leaves, decorated with the $\g_d$-standard decoration,
 and  
 for every family of edges $\{e^i\}_{i\in 1, ...,k}$ in $T$ where no pair of edges lies on a monotone arc of $T$
 the  corresponding source maps $\si_i:W^s(e_h^i)\to M$  
 form a transverse family which is  transverse to $\Si^*$.
  \end{enumerate}
  In that case, $\mathcal D_{\g_d}$ is said to be an \emph{admisible} decoration of $T$.
  The set of admissible sequences of length $d$ is noted $\mathcal A_d$.
  For $\g_{d-1}\in \mathcal A_{d-1}$, one defines $\mathcal A_d(\g_{d-1}):= \{g\in G\mid (g_1,\ldots, g_{d-1}, g)\in
   \mathcal A_d\}$.
 \end{defn}
 If such a sequence exists, which will be discussed in the next proposition, then by following 
 the induction from subsection \ref{multi},
one  inductively proves that all multi-intersections $I(v)$ and stable sets $W^s(e)$ in $T$ are 
\emph{transversely defined} in the sense of Definition \ref{transverse-family}. 
Moreover, they all are transverse to $\Si^*$.  
This is summarized in Corollary \ref{smooth-inter}.

About their compactification, the same induction, by applying the lemmas from Appendix \ref{appendix-comp}
about fiber product of manifolds with conic singularities,
tells us that these non-proper submanifolds compactify
with conic singularities.
  
  \begin{prop}\label{d-to-d+1} For every positive integer $d$
  and every  $\g_{d-1}\in \mathcal A_{d-1}$, the set $\mathcal A_d(\g_{d-1})$ is open and dense in $G$.\footnote{
   In Section \ref{coherence},  we will prove Proposition \ref{prop-coherence} which is 
    stronger than the present one.}  
  \end{prop}
  
  \proof We are going to prove this statement by induction on $d$. Let us begin with $d= 1$. It deals with the unique 
  tree with two leaves. Here, there is 
  no multi-intersection other than the usual intersection of the family $\{\Si^*, \Si, g_1(\Si)\}$.
  In other words, item (2) from Definition
  \ref{requirements} reduces to item (1). And 
  $\mathcal A_1$  is an  open dense subset of $G$ (for instance by  Proposition \ref{transverse-product}).
  
 
 Assume the statement is true for every $d'<d$ and let us prove it for $d$; this deals with the trees having  
 $d+1$ leaves. Since there are only finitely many of them, it is sufficient to give the proof for a fixed tree $T$.
 There is a filtration of $T$ by a decreasing sequence of Fukaya subtrees 
 $T\supset T_{j_{_1}}\supset T_{j_{_2}}\supset  \cdots\supset T_{j_{_r}}\supset e_1^{d+1} $; here, $j_\ell$
 is the label of the leftmost leaf of $T_{j_{_\ell}}$.
 
 Let $L$ be the monotone path from the root to the $(d+1)$-th leaf of $T$. We have a collection of disjoint
 of subtrees $T'_1, T'_{j_{_1}}, \dots, T'_{j_{_r}}$, rooted on the successive vertices of $L$; they are 
 labelled with the label of their leftmost leaf. 
 So, their roots are the successive vertices $v_{1, d+1}, v_{j_{_1}, d+1},\ldots, v_{j_{_r}, d+1}$. 
 We do not care of the height of   $T'_{j_{_\ell}}$; so, we label its trunk only with its upper script 
 $e^{j_{_\ell}}$. If  $\g_{d-1}$ is the sequence obtained from  $\g_{d}$ by erasing the last term, then
 the $\g_{d-1}$-standard decoration decorates all edges of $T$ except $e_1^{d+1}$.
 
 \begin{center}
 \begin{figure}[h]\label{figure7}
 \includegraphics[scale =0.6]{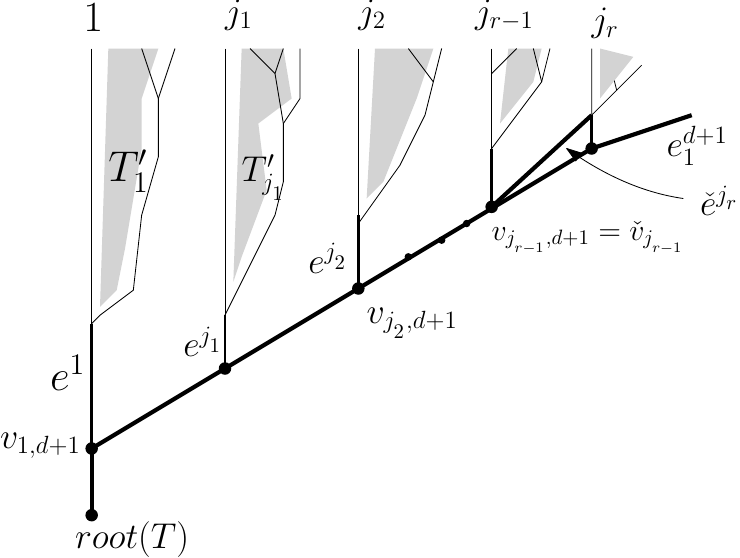}
 \caption{The thick black line represents $L\subset T$. } \label{(d+1)}
 \end{figure}
 \end{center}

 Possibly, $L$ contains only one vertex, namely $v_{1, d+1}$. This case immediately reduces to the case
 $d=1$ of a tree with two leaves. We do not discuss it anymore. If $d>1$, by collapsing  the edge $e_1^{d+1}$
 to its root and ignoring  this point as a vertex one gets a new tree $\check T$ with $d$ leaves. By assumption,
 its $\g_{d-1}$-standard decoration fulfills all transversality requirements. The vertices 
 $v_{1, d+1}, v_{j_{_1}, d+1}, \ldots, v_{j_{r-1}, d+1}$ are still there, with a different 
 right label that we are going to neglect;
  as vertices of $\check T$
 we denote them $\check v_{1}, \check v_{j_{_1}},\ldots,  \check v_{j_{_{r-1}}}$. The right branch issued from
 $ \check v_{j_{_{r-1}}}$ in $\check T$ is a new branch whose label is $\check e^{j_r}$.
 
 We have to understand how the graft of the last branch $e_1^{d+1}$ affects the multi-intersections 
 at these vertices, that is, how we derive $I( v_{j_{_\ell}, d+1})$ from $I(\check v_{j_{_\ell}})$ for every 
 $\ell = 0,\ldots,  r-1$
 (with the convention $j_0=1$).
 And what about $I(v_{j_r, d+1})$---which does not exist in $\check T$---and
  the transversality to $\Si^*$? 
 The other multi-intersections and stable 
 manifolds coming from $\check T$ are kept without any change.
 
 The manifolds $W^s(e^{j_\ell}), \ \ell= 0, \ldots, r-1,$ including $W^s(\check e^{j_r})$,
 and their source maps $\si_{j_{_\ell}}$ valued in $M$ (noted $\si$ for short)
  are transversely defined by the decoration $\g_{d-1}$-standard, 
 whatever the decoration of the last branch. This family of maps is transverse.
 Then the question is to find $g\in G$ so that  this family remains transverse when adding one particular more map,
 namely the inclusion of $g(\Si)$. 
 
 First, we prove that $I( v_{j_\ell, d+1})$ is transversely defined for a generic $g\in G$. 
 We consider the diagram $\De(j_\ell)$ whose limit (or iterated fiber product) $\lim \De(j_\ell)$ is exactly 
 the definition of the multi-intersection $I(\check v_{ j_\ell})$.\\

\centerline{$\xymatrix@R=.25cm@C=.25cm{W^s(e^{j_\ell})\ar[dd]^\si&  & W^s(e^{j_{\ell+1}})\ar[dd]^\si &&&\cdots&
  W^s(e^{j_{r-1}})\ar[dd]^\si & &W^s(\check e^{j_r})\ar[dd]^\si\\
 & Gr(\bar X_{j_{\ell+1}})\ar[dl]^\si \ar[dr]_\tau & &Gr(\bar X_{j_{\ell+2}})\ar[dl]^\si \ar[dr]_\tau&&\cdots&
  &Gr(\bar X_{j_{r}})\ar[dr]_\tau\ar[dl]^\si & &\\
 M&&M&&M&\cdots& M& & M
 }$}
\vskip -.4 cm
\begin{figure}[h]
 \caption{Diagram $\De(j_\ell)$}\label{diagram}
 \end{figure}

\nd Note that the rightmost fiber product 
just produces $W^s(\check e^{j_r})$. 
 Indeed, the graph of the flow and 
 the stable manifold use the same vector field $X_{j_r}$.\footnote{ Of course, this construction of $I(\check v_{ j_\ell})$
 has an extra useless factor $M$.}
 This is a trick that allows us to graft $W^s(e_1^{d+1})$
 on $W^s(\check e^{j_r})$.
  The limit of $\De(j_\ell)$
  is equipped with 
  a map $f_\ell: \lim \De(j_\ell)\to M$ to the rightmost $M$ in the bottom line 
 of  the diagram. In this language, the requirement  we want  is the following.
 \begin{equation} 
 \quad \text{The inclusion } g\cdot W^s(e_1^{d+1}, X^+) \hookrightarrow M\text{ is transverse to } f_\ell.
 \end{equation}
 By Proposition \ref{transverse-product} this property is generic in $G$. The same holds if one requires 
 the transversality to the family of all maps $f_\ell, \ell= 0,\ldots, r$. Of course, one has also to 
 add all the requirements
 of item (2) in Definition \ref{requirements} involving edges which do not meet the line $L$---all of them state 
 transversality of family of  source maps. 
 The induction argument holds.
 \bull
 
 \begin{cor} \label{smooth-inter}For every admissible sequence  $\g_d$, every  
Fukaya tree $T$ with $d+1$ leaves, 
 and entries $(x_1, ..., x_{d+1})$, then 
the multi-intersection $I(T,\mathcal D_{\g_d}, x_1, ..., x_{d+1})\subset M^{\times n(T)}$ is transversely defined. Its compactification has conical singularities.
Moreover, this multi-intersection is mapped transversely to $\Si^*$ through  the first projecton 
$p_1:  M^{\times n(T)}\to M$. \bull
\end{cor}

\begin{prop}{\bf (Dimension formula)}\label{dim-formula} Given a generic Fukaya tree $T$ with $d$ leaves
endowed with an admissible decoration and given entries $(x_1, \ldots, x_d)$, we have
\begin{equation}\label{dimension}
\quad\quad \dim I(T,x_1,\ldots, x_{d})-n= \sum_{1\leq i\leq d} (\dim W^s(x_i)-n) + d-2.
\end{equation}
\end{prop}
\proof If we consider the Fukaya tree $T_0$ where all interior edges are collapsed, formula (\ref{dimension})
where $d-2$ is erased (as there is no interior edge)
reduces to the usual dimension formula for an intersection of $d$ submanifolds: it is additive
up to the shift by the ambient dimension. Each time 
 an interior edge is created, the dimension increases by 1 since some flow is needed which generates a stable set. \bull\\


\begin{rien}{\bf Multi-intersection as a chain.} \label{evaluation} 

{\rm In order to see the above multi-intersection
$I(T, x_1, \ldots, x_{d})$  as a chain in the Morse complex $C_*^+(f)$ whose degree  is $\dim I(T)$, 
we have to define 
the coefficient $<I(T, x_1, \ldots, x_{d}), x_{root}>$ for every test data $x_{root}\in crit f\cup crit^+f_\partial$
of degree equal to $\dim I(T)$.\footnote{ Here, the decoration is implicit and the entries are mentioned only when it seems useful for understanding.}
}
\end{rien}

We recall  the edge $e_{root}$ is decorated 
with  the vector field $X^+$. Moreover, by Corollary \ref{smooth-inter}
the projection $p_1:I(T)\to M$  to the first factor of $M^{\times n(T)}$ is transverse to 
$\Si^*$, and hence, to $W^u(x_{root}, X^+)$.
By the choice of the degree of {\it test data}, the codimension of the  unstable manifold $W^u(x_{root},X^+)$
is equal to $\dim I(T,x_1, ..., x_d)$. 
Transversality implies 
the intersection 
$I(T, x_{root}):= p_1^{-1}\left(W^u(x_{root},X^+)\right)$ is 0-dimensional. Since its compact closure has 
conic singularities, 
this intersection is a finite set.

Being transversely defined in an oriented manifold, $I(T)$ is oriented, once an orientation has been chosen 
for every stable manifold of critical point. In the same time, the unstable manifolds are co-oriented. 
Therefore, each point in $I(T, x_{root})$ has a sign which allows us to define $<I(T), x_{root}>$ as the algebraic 
counting of elements in this finite set.

The map $<I(T, x_1, \ldots, x_d), ->$ from test data of the right degree
 to $\Z$ will be called  the $T$-{\it evaluation map}. It depends on
  the admissible finite sequence $\g_{d-1}$ 
chosen in the group $G$.

  \section{Coherence}\label{coherence}
  
    The $A_\infty$-structure that
we want to reach requires to consider all Fukaya trees, generic or not, and to decorate them
 in a {\it coherent} way. We give the precise definition right below. 
 The present section consists of mixing the skip property from Section  \ref{trans}
 and the admissibility condition (Definition \ref{requirements}.) More precisely, the issue is to prove an analogue
 of Proposition \ref{flows-sequence} in the setting of trees with admissible decorations.

 We first fix the setup for coherence. Given an infinite sequence $\g_\infty$ in the group $G= Dif\!f_0(M)$, a Fukaya tree  $T$ and a subtree $T_0$
 (Definition \ref{subtree}), then the $\g_\infty$-standard decoration of $T$ induces on $T_0$ a decoration
 $\mathcal D_T(T_0)$ which, in general, differs from
 its \emph{own}
  $\g_\infty$-standard decoration $\mathcal D_{\g_\infty}(T_0)$; the labelling of this latter is consecutive and 
  begins at $1$.
  Similarly, the quotient $T/T_0$ has also a decoration $\mathcal D(T/T_0)$ inherited from $T$ which in general is not 
  $\g_\infty$-standard; the shrinking of $T_0$ makes some gap in the decorating sequence.

 \begin{defn}{\bf (provisional)}${}$\label{coherence-prov}
 \nd{\rm 1)} Two admissible decorations of $T$ are said to be  \emph{isotopic} 
 if both lie in the same arcwise connected component of admissible decorations.\smallskip
 
 \nd{\rm 2)} A sequence $\g_d$ is said to be \emph{coherent} if it is admissible (that is, $\g_d\in \mathcal A_d$
 in the sense of Definition \ref{requirements})
 and, for every Fukaya tree $T$ with $d+1$ leaves and every subtree $T_0$, the decorations 
 $\mathcal D_T(T_0)$ and $\mathcal D(T/T_0)$ inherited from $\mathcal D_{\g_d}(T)$
 are both  isotopic to their respective own $\g_d$-standard decoration.
 
  An infinite sequence $\g_\infty$ is said to be \emph{coherent} if its finite subsequences $\g_d$ 
  are coherent for every $d$.
 \end{defn}
 
 These two examples, $\mathcal D_T(T_0)$ and $\mathcal D(T/T_0)$, are examples of \emph{pruned} trees 
 in the sense of the next definition.  
  
 \begin{defn} \label{prune} A \emph{pruned tree} is a tree with $k$ leaves and whose rightmost leaf is labelled $\ell>k$;
 that is, the labelling is not consecutive  from $1$ to $k$. It is said to be a \emph{pruned tree} with $\ell$ leaves.
 \end{defn}
 
 From this point of view, if $T_0$ is a Fukaya subtree of $T$  with no leaf labelled $1$, then
 $\mathcal D_T(T_0)$ looks as a particular case where  the pruning reads $[1,r)$. In contrast, if 1 is the label of the 
 leftmost leaf of $T_0$ the pruning is entirely made on the right of $T$ and has no effect on the decoration 
 of $T_0$; it will be said to be a \emph{useless pruning}.
 The notion of admissibility extends to the pruned trees.
  
 \begin{defn} \label{coherence-defn} A sequence $\g_d$ of diffeomorphisms of $M$ isotopic to $Id_M$ is said to be \emph{coherent} 
 if the following two conditions are fulfilled:
 \begin{enumerate}
 \item for every tree  with $d+1$ leaves, pruned or not, the (induced) $\g_d$-standard decoration is admissible;
 \item if such a tree is pruned, its induced decoration is isotopic to its own $\g_d$-standard decoration among the admissible decorations.
  \end{enumerate}
    
   An infinite sequence $\g_\infty$ is said to be \emph{coherent} if its finite subsequences $\g_d$, consecutive from $1$,
  are coherent for every $d$. 
 \end{defn}

 \begin{prop} \label{prop-coherence} There exists a coherent infinite sequence of diffeomorphisms of $M$
 whose restriction 
 to the \emph{preferred neighborhood} $V(\Si)$ of $\Si$ in $M$ is made of quasi translations. 
 \end{prop}
 
 \proof This will be proved by an induction on $d$  starting at $d=1$. It is somehow a combination
 of  Proposition \ref{G-transverse} about the skip property 
 and Proposition \ref{d-to-d+1}
about admissibility. For $d=1$, there is no pruning; so, the statement reduces to transverse intersection.

As for Proposition \ref{G-transverse} we use quasi translation flows which are provided to us by
Proposition \ref{translation-prop}. Assume we have a sequence $\g_{d-1}$ whose elements are quasi translations
$g_1= v_0^{t_1}, \ldots, g_{d-1}= v_{d-2}^{(t_{d-1}-t_{d-2})}$ which form a coherent sequence of length $d-1$.
Since the transversality requirements are open conditions 
some time $t_d>t_{d-1}$ is available. 

But increasing by $1$ the number of leaves  imposes to satisfy new 
transversality requirements which make necessary to approximate the flow $ v_{d-2}$ by a suitable
$v_{d-1}$ (compare to the proof of item (2) from Proposition \ref{translation-prop}.) The new requirements in question
are essentially those resulting from diagrams like $\De(j_\ell)$ (see Figure \ref{diagram}). Here, it should be noted 
that the family of quasi translations is reach enough for providing us with finite dimensional families which are 
submersive onto the preferred neighborhood $V(\Si)$.  Therefore, the transversality theorem to a singular map is available among quasi translations. 

At this point, we have a new proof of Proposition \ref{translation-prop}). But working with flows
provides us with the so-called \emph{transversality-to-path} (item (3) from Proposition \ref{translation-prop}.)
Therefore we get the skip property in the context of Fukaya tree admissibility, which is the same as coherence 
for pruned trees with gap of length one. As in Corollary \ref{Cor3.13}, coherence for general prunings follows.
\bull

\section{Transition}\label{transition}

Proving $\A$-relations in Section \ref{A-infty} requires
 to analyse the {\it transition} phenomenon
 from $I(T')$ to $I(T'')$,
where $T'$ and $T''$ are two ``generic'' Fukaya trees with $d$ leaves 
  on each side of 
a ``codimension-one stratum''\footnote{This is somehow abusive since no topology has been defined 
on $\mathcal T_d$; but it could have been defined.}
 in the space $\mathcal T_d$ of Fukaya trees with $d$ leaves ({\it cf.} just after
Definition \ref{tree}).\\

\begin{rien} {\bf Setting of transition}.\label{gluing}
{\rm We consider two Fukaya trees $T'$ and $T''$ which differ only in the star of $v$ 
(see Figure \ref{Figure4}). The intermediate 
 Fukaya tree $T$ has 
 exactly one vertex $v$ whose valency is 4. Up to isotopy,
$T'$ and $T''$ are the only two possible deformations from $T$ to a generic tree.
 The edge $e'$ (resp.
$e''$) is collapsed in $T'\to T$ (resp. $T''\to T$). The counting of interior edges gives $n(T')=n(T'')= n(T)+1$.

Since Proposition \ref{prop-coherence} applies to trees, generic or not, if $\g_\infty= g_1, g_2, \ldots$ is an infinite coherent sequence in the group $G$ then the sequence of vector fields 
$$X^+, g_{1*}X^+, \dots, g_{(d-1)*}X^+$$
 is a sequence of coherent $\g_{d-1}$-standard decorations common to $T'$, $T$ and  $T''$ where $\g_{d-1}$
 is the beginning subsequence of length $d$ in $\g_\infty$. In the next proposition 
 $I(T'), I(T), I(T'')$ will denote the respective multi-intersections at the vertex $v^{root}$, right above their
 common root: 
 the above decorations are implicit and the entries are arbitrary  
 critical points in $crit f\cup crit^+ f_\p$ with possible repetition.}
\end{rien}
\begin{center}
\begin{figure}[h]
\includegraphics[scale =.8]{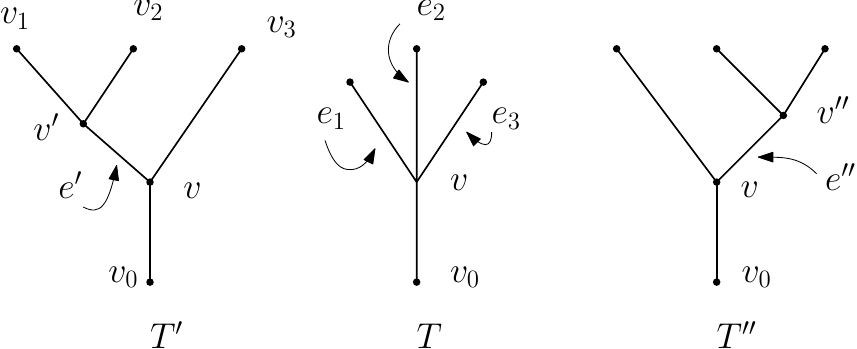}
\caption{}\label{Figure4}
\end{figure}
\end{center}


\begin{prop}\label{prop-gluing}
In this setting, the multi-intersection $I(T)$ has a natural smooth embedding 
$j':I(T)\to I(T')$, respectively $ j'':I(T)\to I(T'')$), as a boundary stratum. 
These embeddings extend to the closure $ cl\bigl(I(T)\bigr) $
in a way compatible
with the stratifications.  Then, $I(T')\mathop{\cup}\limits_{I(T)} I(T'')$
is a (piecewise smooth) manifold which is equipped with a natural  stratified compactification.
\end{prop}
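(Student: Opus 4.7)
The plan is to realize $I(T)$ as a codimension-one boundary stratum of $I(T')$ coming from the initial-time stratum of the graph $G_{e'}$ decorating the edge that gets collapsed, and symmetrically for $I(T'')$. Using the notation of \ref{gluing} and Figure \ref{Figure4}, the vertex $v$ of valency 4 in $T$ has incoming edge $e_0$ and outgoing edges $e_1,e_2,e_3$; in $T'$ it resolves into vertices $v'$ (outgoing edges $e_1$ and $e'$) and $v''$ (at the end of $e'$, with outgoing edges $e_2,e_3$). By Rules 5 and 6 applied inside $T'$,
$$
I(v') \;=\; W^s(e_1)\mathop{\times}\limits_M W^s(e'), \qquad W^s(e') \;=\; G_{e'}\mathop{\times}\limits_M I(v''),
$$
with $I(v'')=W^s(e_2)\mathop{\times}_M W^s(e_3)$. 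By Example \ref{Q} and Proposition \ref{strat_graph}, the graph $G_{e'}$ carries the diagonal $\De_M\subset M\times M$ as a codimension-one boundary component corresponding to the semi-flow time $t=0$.

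The embedding $j':I(T)\hookrightarrow I(T')$ will be defined by restricting the above iterated fiber product to this $\{t=0\}$ boundary of $G_{e'}$. Indeed, $\De_M \mathop{\times}_M I(v'')$ is canonically identified with $I(v'')$ through the diagonal, and propagating this upward one step yields
$$
W^s(e_1)\mathop{\times}\limits_M I(v'') \;=\; W^s(e_1)\mathop{\times}\limits_M W^s(e_2)\mathop{\times}\limits_M W^s(e_3) \;=\; I(v) \;=\; I(T),
$$
the middle equality being exactly the triple transverse fiber product of Proposition \ref{triple} that defines the valency-4 multi-intersection at $v$. Admissibility of $\mathcal D_{T'}$ ensures that every intermediate fiber product is transversely defined, and Lemma \ref{continuous-surj} implies that the induced decoration $\mathcal D_{T'\to T}$ is admissible, so the right-hand side is legitimately $I(T)$. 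In the $g$-standard situation, Proposition \ref{transition_prop} guarantees that $\mathcal D_{T'\to T}=\mathcal D_{T''\to T}$, so the image $j'(I(T))$ is intrinsically defined independently of the side $T'$ or $T''$. The symmetric construction with $e''$ yields $j''$.

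For the extension to closures, Proposition \ref{strat-multi} describes the stratification of $cl(I(T'))$ with conical singularities; the strata lying over the $\{t=0\}$ face of $G_{e'}$ match precisely the strata of $cl(I(T))$ obtained via Propositions \ref{frontier}, \ref{strat_vstab} and \ref{strat-multi}, giving a compatible stratified extension of $j'$. Once these two identifications are in place, $I(T')\mathop{\cup}\limits_{I(T)}I(T'')$ is topologically a manifold; along $I(T)$ it is only \emph{piecewise} smooth because the two collars attached to $I(T)$ are parametrized by the flow times of $X_{e'}$ and $X_{e''}$ respectively, which a priori define different smooth normal structures, but coincide up to a homeomorphism that is smooth on each side. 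The stratified compactification is obtained by gluing $cl(I(T'))$ and $cl(I(T''))$ along $cl(I(T))$. The main obstacle will be to check that the $\{t=0\}$ restriction of the iterated fiber product remains transversely defined, i.e., that the hypothesis \eqref{triple-assumption} of Proposition \ref{triple} is preserved when the flow parameter along $e'$ degenerates to $0$; this is exactly what the definition of admissibility (Definition \ref{admissible}, together with the openness statement of Corollary \ref{open}) guarantees, since admissibility demands transversality for every sub-fiber-product and along every stratum of the compactification, including those arising from the boundary strata of the graphs.
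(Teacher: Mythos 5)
Your proposal is correct and follows essentially the same route as the paper: realize $I(T)$ as the $\{t=0\}$ (diagonal) boundary stratum of the graph $G_{e'}$ inside the iterated fiber product defining $I(T')$, invoke admissibility (along every compactification stratum) for the transversality, and invoke transition compatibility (Proposition \ref{transition_prop}) to identify the two induced decorations on $T$. The only cosmetic difference is the labeling of which pair of outgoing edges the new interior edge $e'$ groups; the mechanism is identical to the paper's use of the diagonal map $J$.
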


Note this amalgamation is not contained in $M^{\times (n(T)+1)}$. It can only be piecewise immersed into that,
with a fold along $I(T)$.\\

\proof 
It is sufficient to focus on the subtrees  $T(v_0)$, $T'(v_0)$ 
and $T''(v_0)$ rooted at $v_0$ (Figure \ref{Figure4}).
 Since $T'$ and $T''$ play the same role with respect to $T$, we look
only at $T(v_0)$ and  $T'(v_0)$. 

On the one hand, the 
multi-intersection $I(v,T)$ is contained in 
$M^{\times n(v,T)}$ where $n(v,T)-1$ is  equal to  the number of {\it interior} edges of $T$ lying above $v$.
We have
\begin{equation}\label{I(v,T)}
 I(v, T)=\left( W^s(e_1)\mathop{\times}\limits_MW^s(e_2)\right)\mathop{\times}\limits_M W^s(e_3)\,,
\end{equation}
where the fiber product is associative.
On the other hand, $I(v,T')$ is contained in $M^{\times n(v,T')}$ and 
the graph $G_{e'}$ of the semi-flow associated with the decoration of $e'$ is contained in  the product of the first two factors of $M^{\times n(v,T')}$.
 Thus, there is a---partially---diagonal map
 \begin{equation}\label{I(v,T')}
\left\{\begin{array}{ccc}
 \quad\quad J: M^{\times n(v,T)}&\to &M^{\times n(v,T')}\\
 (x, y ,z,\ldots)&\mapsto& (x,x, y, z, \ldots)
\end{array}\right.
\end{equation}
Observe that $I(v',T')$ is canonically isomorphic to 
$ W^s(e_1)\mathop{\times}\limits_M W^s(e_2)$ the amalgamation being made through the source map $\si$
of the respective factors---actually the projection to the first factor. Therefore, we have:
\begin{equation}\label{5.5}
 I(v,T)=
J^{-1}\left(\Bigl(G_{e'}\mathop{\times}\limits_M I(v', T')\Bigr)\mathop{\times}\limits_M W^s(e_3)\right)\, .
\end{equation}
As a consequence, $J$ induces the desired embedding $j'$, 
which is a boundary because the diagonal is a boundary of 
$G_{e'}$.  \bull

\section{Orientations}\label{orientations}

The matter of orientation is a question of Linear Algebra. Some conventions
have to be chosen.

\begin{rien}{\bf Orientation, co-orientation and boundary.} \label{or-coor}

\nd {\rm 1)} Let $E$ be a vector subspace of an
oriented vector space $V$. Let $\nu(E,V)$ be a complement to $E$ in $V$.
Then, the orientation and the co-oriention of $E$ will be related as follows:
\begin{equation}\label{or-coor1}
or\bigl(\nu(E,V)\bigr)\wedge or(E)=or(V).
\end{equation}

\nd {\rm 2)} Let $E$ be a half-space with boundary $B$. Let $\ep$ be a vector in $\nu(B,E)$
pointing outwards, where $\nu(B,E)$ is a complement to $B$ in $span(E)$.
 Then, the orientations of $B$ and $E$ will be related as follows:
\begin{equation}\label{or-coor2}
\ep\wedge or(B)= or(E).
\end{equation}
\end{rien}
When $E$ is oriented, this orientation  of $B$ is called the {\it boundary orientation};
 it is denoted by
$or_\partial (B, E)$; one also says that $B$ is  the {\it oriented boundary} of $E$.
 Notice that, when $E\subset V$, the choices 
 1) and 2) are compatible if we choose $\nu(B, V)= \nu(E,V)\oplus \ep \R$.\\

\begin{rien}{\bf Orientation and fiber product.}\label{orfiber}
 {\rm Let $E_1,E_2, V$ be three
oriented vector spaces and, for $i=1,2$, let $f_i:E_i\to V$ be a linear map.
Assume that $f_1\times f_2: E_1\times E_2\to V\times V$ is transverse to 
the diagonal $\De$. Then the fiber product $
E_{12}:=E_1\mathop{\times}\limits_V E_2$ is 
well-defined as the inverse image of $\De$ by $f_1\times f_2$. 

The first factor of $V\times V$ is seen as a  complement to $\De$ in $V\times V$. So, 
the orientation of $V$ defines a co-orientation of the diagonal.
Transversality to $\Delta$ yields a
canonical  isomorphism $\nu(E_{12},E_1\times E_2)\cong \nu(\De,V\times V)$.
Thus, $E_{12}$ is co-oriented in $E_1\times E_2$. Eventually, it is oriented according 
to (\ref{or-coor}).\\
}
\end{rien}

\begin{prop}In the case when a fiber product with three factors is defined,
the orientation is associative, that is:
$\left(E_1\mathop{\times}\limits_V E_2
\right)\mathop{\times}\limits_V E_3$ and 
$E_1\mathop{\times}\limits_V\left(E_2\mathop{\times}\limits_V E_3\right)$
have the same orientation. 
\end{prop}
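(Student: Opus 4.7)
The plan is to reduce the associativity claim to a comparison of co-orientations of the common underlying submanifold inside a common ambient space. Both iterated fiber products have the same underlying set $E_{123}=\{(e_1,e_2,e_3)\in W : f_1(e_1)=f_2(e_2)=f_3(e_3)\}$ inside $W:=E_1\times E_2\times E_3$, equipped with its product orientation $or(E_1)\wedge or(E_2)\wedge or(E_3)$. Since the orientation of $E_{123}$ is determined by $or(\nu(E_{123},W))$ via (\ref{or-coor1}), it suffices to show that the two constructions yield the same co-orientation. I would use throughout the transitivity identity $or(\nu(A,C))=or(\nu(B,C))\wedge or(\nu(A,B))$ for a chain $A\subset B\subset C$, obtained by iterating (\ref{or-coor1}).

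For the left-associated product $(E_1\times_V E_2)\times_V E_3$, apply the chain $E_{123}\subset E_{12}\times E_3\subset W$. Because $E_{12}$ occupies the first two slots of $W$ and $E_3$ the third, one gets $or(\nu(E_{12}\times E_3,W))=or(\nu(E_{12},E_1\times E_2))$ with no extra sign: the decomposition $or(W)=or(\nu(E_{12},E_1\times E_2))\wedge or(E_{12})\wedge or(E_3)$ already has the normal in the leftmost position. The outer normal $or(\nu(E_{123},E_{12}\times E_3))$ is then obtained by lifting the first-factor co-orientation of $\De\subset V^2$ through the map $(\pi_{12},f_3)$, placing it along the diagonal $E_{12}$-direction in $W$. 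For the right-associated product $E_1\times_V(E_2\times_V E_3)$, the analogous chain is $E_{123}\subset E_1\times E_{23}\subset W$. Now $E_{23}$ occupies the last two slots, and in order to rewrite $or(W)=or(E_1)\wedge or(\nu(E_{23},E_2\times E_3))\wedge or(E_{23})$ in the form $or(\nu(E_1\times E_{23},W))\wedge or(E_1)\wedge or(E_{23})$, one must commute $or(E_1)$ past $or(\nu(E_{23},E_2\times E_3))$, producing the sign $(-1)^{\dim V\cdot\dim E_1}$. The outer normal $or(\nu(E_{123},E_1\times E_{23}))$ is lifted through $(f_1,\pi_{23})$ and lives in the $E_1$-direction of $W$.

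The conclusion then follows by a direct comparison of the two resulting wedges in the transverse $2\dim V$-plane, which I would carry out by choosing local coordinates at a point of $E_{123}$ and reducing to the linear-algebra model $E_i=V$, $f_i=\mathrm{id}$. In this model both wedges reduce to the same bivector in $\Lambda^{2\dim V}(V\oplus V)$ spanning the first two $V$-slots of $V^3$; as a check in the rank-one case, the left-associated side produces $e_1\wedge(e_1+e_2)=e_1\wedge e_2$, while the right-associated side produces $(-e_2)\wedge e_1=e_1\wedge e_2$, the sign $-1$ coming precisely from the commutation explained above. The main obstacle — and the only reason the statement is not tautological — is this sign bookkeeping: the two iterated co-orientations of the small diagonal $\De_3\subset V^3$ naively differ by $(-1)^{\dim V}$, and it is exactly the product-commutation sign in the right-associated construction that compensates, yielding equality of $or(\nu(E_{123},W))$ and hence of the two orientations on $E_{123}$.
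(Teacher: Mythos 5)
Your setup mirrors the paper's: both reduce to comparing co-orientations of the small diagonal $\de_3\subset V^3$ through the two chains of inclusions, and both use transitivity of co-orientations. The problem is in the final step, where you claim that the two signs you have (correctly) isolated cancel.

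You find that the two iterated co-orientations of $\de_3$ "naively differ by $(-1)^{\dim V}$," and that the product-commutation in the right-associated chain contributes $(-1)^{\dim V\cdot\dim E_1}$. You then assert these compensate. But their product is
$(-1)^{\dim V}\cdot(-1)^{\dim V\cdot\dim E_1}=(-1)^{\dim V\,(1+\dim E_1)}$,
which equals $+1$ only when $\dim V\,(1+\dim E_1)$ is even — that is, when $\dim V$ is even or $\dim E_1$ is odd. This is exactly where the ``reduction to the linear-algebra model $E_i=V$'' breaks down: the commutation sign depends on $\dim E_1$, not on $\dim V$, so you cannot set $E_1=V$ without possibly changing its parity. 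Your rank-one check has $\dim E_1=\dim V=1$, which is precisely a case where the two exponents happen to agree, so it does not probe the issue.

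Concretely, take $V=\R$, $E_1=\R^2$, $E_2=E_3=\R$, $f_1(a,b)=a$, $f_2=f_3=\mathrm{id}$, with the conventions of (\ref{or-coor1}) and Subsection \ref{orfiber}. Then $E_{123}=\{(a,b,a,a)\}\subset\R^4$, and a direct computation (using, say, the complements spanned by $(1,0,1,0)$ for $\nu(E_{123},E_{12}\times E_3)$ on the left and by $(1,0,0,0)$ for $\nu(E_{123},E_1\times E_{23})$ on the right) gives the positive bases $\bigl((1,0,1,1),(0,1,0,0)\bigr)$ for $(E_1\times_V E_2)\times_V E_3$ and $\bigl((0,1,0,0),(1,0,1,1)\bigr)$ for $E_1\times_V(E_2\times_V E_3)$ — opposite orientations, matching the predicted sign $(-1)^{1\cdot(1+2)}=-1$. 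So the cancellation you invoke genuinely fails, and the last paragraph of your argument needs to be replaced by an honest bookkeeping of this residual sign (or by a different orientation convention under which it vanishes). Note that the paper's own one-line proof, "in both cases, its co-orientation is induced by the orientation of the first $V\times V$," is too terse to engage this point; your more careful accounting is in fact what exposes it.
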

\proof It is sufficient to look at the small diagonal $\de_3$ in
$V\times V\times V$. In the first case it is seen as the diagonal 
of $\De\times V$ and in the second case it is seen as the diagonal
 $V\times \De$. In both cases, its co-orientation is induced by 
the orientation
of the first $V\times V$. 
\bull\\

 In the  setting of subsection \ref{orfiber}, we have the following formulas.
\begin{prop} \label{pro-ori}{\rm 1)} 
Let $E_1$ be an oriented linear half-space with oriented boundary 
$B_1$ and let $E_2$ be an oriented vector space. Assume that the restriction $f_1\times f_2\vert (B_1\times E_2)$
 is transverse to $\De$. Then, the fiber product 
$ B_{12}:= B_1\mathop{\times}\limits_V E_2$ is the boundary of $E_{12}$ and 
its orientation coincides with the boundary orientation, that is: 
\begin{equation}\label{fprod-or1}
or_\partial\left(B_{12},E_{12}\right)= or\left( B_1\mathop{\times}\limits_V E_2\right)
\,.
\end{equation}

\nd {\rm 2)} Let 
 $E_2$ be now an oriented linear half-space with an oriented boundary 
$B_2$ and let $E_1$ be an oriented vector space. Assume that the restriction $f_1\times f_2\vert (E_1\times B_2)$
 is transverse to $\De$. Then, the fiber product 
$B_{12}:= E_1\mathop{\times}\limits_V B_2$ is the boundary of $E_{12}$. 
The  orientations are related as follows:
\begin{equation}\label{fprod-or2}
or_\partial\left(B_{12} ,E_{12}\right)= 
(-1)^{\dim E_1}
or\left( E_1\mathop{\times}\limits_V B_2\right)\,.
\end{equation}

\end{prop}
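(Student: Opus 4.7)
My plan is to prove both identities by choosing a well-adapted complement to $TE_{12}$ inside $T(E_1\times E_2)$ and then doing careful sign bookkeeping in wedge products.

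First, I would exploit the transversality assumption to choose a linear complement $\nu$ of $TB_{12}$ inside $T(B_1\times E_2)$ (resp.\ $T(E_1\times B_2)$ for part 2). By a dimension count, such a $\nu$ is automatically a complement of $TE_{12}$ in $T(E_1\times E_2)$: indeed $TE_{12}=TB_{12}\oplus\langle\tl\ep\rangle$ where $\tl\ep\in TE_{12}$ denotes an outward normal to $B_{12}$, and $\tl\ep$ is transverse to $T(B_1\times E_2)$. Moreover, the canonical isomorphism $\nu\to\nu(\De,V\times V)$ induced by $d(f_1\times f_2)$ factors through $d(f_1\vert_{B_1}\times f_2)$ (resp.\ $d(f_1\times f_2\vert_{B_2})$), so the pulled-back orientation $or(\nu)$ is the \emph{same} whether one regards $\nu$ as $\nu(E_{12},E_1\times E_2)$ or as $\nu(B_{12},B_1\times E_2)$ (resp.\ $\nu(B_{12},E_1\times B_2)$).

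Second, I would observe that after a positive rescaling $\tl\ep=\ep_1+w$ with $w\in T(B_1\times E_2)$ (respectively $\tl\ep=\ep_2+w$ with $w\in T(E_1\times B_2)$ in part 2). Splitting $w=w_1+w_2$ along $TB_{12}\oplus\nu$, the piece $w_1$ lies in the span of a positive basis of $TB_{12}$ and hence contributes zero after wedging with $or(B_{12})_{fp}$, while $w_2\in\nu$ contributes zero after further wedging with $or(\nu)$. Consequently, inside any wedge of the form $or(\nu)\wedge(\,\cdot\,)\wedge or(B_{12})_{fp}$ one may freely interchange $\tl\ep$ and $\ep_i$.

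In part 1, combining the defining relations $or(\nu)\wedge or(B_{12})_{fp}=or(B_1\times E_2)$, $or(\nu)\wedge or(E_{12})=or(E_1\times E_2)$, and $or(E_1\times E_2)=\ep_1\wedge or(B_1\times E_2)$, one obtains the equality $or(\nu)\wedge or(E_{12})=\ep_1\wedge or(\nu)\wedge or(B_{12})_{fp}$. Commuting $\ep_1$ past $or(\nu)$, substituting $\ep_1\to\tl\ep$ by the previous paragraph, and cancelling the leading $or(\nu)$ yields $or(E_{12})=\tl\ep\wedge or(B_{12})_{fp}$, which matches the boundary relation $\tl\ep\wedge or_\partial=or(E_{12})$ and proves part 1. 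In part 2 the same calculation applies, but now with $or(E_1\times E_2)=or(E_1)\wedge\ep_2\wedge or(B_2)$; commuting $\ep_2$ past $or(E_1)$ produces an extra sign $(-1)^{\dim E_1}$ which survives all subsequent manipulations and yields (\ref{fprod-or2}). The main delicate point is the sign-tracking through the wedge commutations of $\ep_i$ past $or(\nu)$: the parity factors produced are exactly those absorbed by the convention chosen for orienting $\nu(\De,V\times V)$ in \ref{orfiber}, and the proof consists in checking that this cancellation is clean.
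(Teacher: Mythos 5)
Your proof follows essentially the paper's route: fix a complement $\nu$ of $TB_{12}$ in $T(B_1\times E_2)$ (equivalently, of $TE_{12}$ in $T(E_1\times E_2)$), observe that the co-orientation pulled back from $\nu(\De,V\times V)$ is the same in both readings, and then reduce the comparison of the two orientations of $B_{12}$ to a comparison between the product orientation and the boundary orientation of $\partial(E_1\times E_2)$. The paper compresses this to two sentences plus the evaluation of $or_\partial\bigl(\partial(E_1\times E_2),E_1\times E_2\bigr)$; you make the underlying wedge bookkeeping explicit, which is in principle the cleaner way to see it.

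However, that explicit bookkeeping reveals a sign which your argument does not actually resolve. From $or(\nu)\wedge or(E_{12})=\ep_1\wedge or(\nu)\wedge or(B_{12})_{fp}$, commuting the single vector $\ep_1$ past $or(\nu)$ produces a factor $(-1)^{\dim\nu}=(-1)^{\dim V}$; after the (legitimate) substitution $\ep_1\to\tl\ep$ and cancellation of the leading $or(\nu)$ one obtains $or(E_{12})=(-1)^{\dim V}\,\tl\ep\wedge or(B_{12})_{fp}$, not the sign-free identity you state. Combined with $\tl\ep\wedge or_\partial(B_{12},E_{12})=or(E_{12})$ this yields $or_\partial(B_{12},E_{12})=(-1)^{\dim V}or\bigl(B_1\times_V E_2\bigr)$, i.e.\ an extra $(-1)^{\dim V}$ relative to (\ref{fprod-or1}), and the same extra factor appears in part 2 relative to (\ref{fprod-or2}). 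You notice this parity but claim it is ``absorbed by the convention chosen for orienting $\nu(\De,V\times V)$''; that is an assertion rather than an argument, and it is not supported by \ref{orfiber}: the convention there merely transports $or(V)$ onto $V\times\{0\}$ and introduces no $\dim V$-dependent sign that could cancel the commutation factor. As written, your proposal establishes the two formulas only up to the global factor $(-1)^{\dim V}$; to complete the proof you must either show that this factor equals $+1$ under the paper's conventions (a direct check with $V=\R$, $f_1=f_2=\mathrm{id}$, $E_1=(-\infty,0]$, $E_2=\R$ suggests it does not), or track it explicitly through the statement.
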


\proof In both cases the co-orientation of $B_{12}$ in the boundary 
$\partial\left(E_1\times E_2\right)$
 is induced by the co-orientation of the diagonal $\De$.
So, the only difference depends on the boundary orientation of
 $\partial\left(E_1\times E_2\right)$. In the first case, 
the boundary orientation is the product orientation 
$or_\partial(B_1, E_1)\wedge or(E_2)$. In the second case, we have:
$$or_\partial\left(\partial(E_1\times E_2),E_1\times E_2 \right)
=(-1)^{\dim E_1}or(E_1)\wedge or_\partial(B_2, E_2)\,.
$$\bull

 Of course, all of that was previously  said in the linear case applies
word to word
in the non-linear case to fiber products of manifolds with boundary 
when they are defined, that is, under some transversality assumptions.
The intersection of two transverse submanifolds is a particular case of 
the previous discussion.\\

\nd{\bf  Orientation and graph of a semi-flow.} 
Let $e$ be an edge (interior or not) in a decorated
Fukaya tree $(T,\mathcal D)$ and let $X_e$ be the gradient decorating $e$.
Let $G_e$ be the graph of its positive semi-flow $\bar X_e$.
The source map $\si_e$ makes  $G_e$    a $[0, +\infty)$-bundle
over $M$. By convention,  $G_e$ will be oriented like $or([0,\infty))\wedge or(M)$. Recall also the target map
$\tau_e: G_e\to M, \ (t,x)\mapsto \bar X_e^t(x)$.

\begin{prop}\label{comp-graph}
Let $z\in crit f\cup crit^+f_\partial$ and let $H$  be the codimension-one 
stratum 
in the closure of $G_e\subset M\times M$ made of orbits that are broken at $z$.\footnote{ See Proposition \ref{strat_graph} item 3).}
Denote by $\dot W^s(z)$ the stable manifold punctured at $z$; and similarly for 
the unstable manifold $\dot W^u(z)$.
Then we have 
\begin{equation}\label{or-graph}
H=  \dot W^s(z)\times\dot W^u(z)
\end{equation}
as oriented manifolds where $H$ is oriented as a boundary component
of $G_e$. 
Moreover, the right handside of (\ref{or-graph}) is a sub-product of $M\times M$.

\end{prop}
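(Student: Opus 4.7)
My plan is to prove the proposition in two stages: a set-theoretic identification of $H$ inside $M\times M$, and an orientation computation in the Morse model.

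The set-theoretic identification follows from Proposition \ref{strat_graph}(3). A point in the codimension-one stratum generated by $z$ is a pair $(x,y)$ of distinct points linked by an orbit broken at $z$; this means the forward $\bar X_e$-orbit of $x$ asymptotes to $z$ (so $x\in \dot W^s(z)$) and the outgoing orbit from $z$ reaches $y$ in finite time (so $y\in \dot W^u(z)$). Conversely every such $(x,y)$ lies in $cl(G_e)$, as one sees from the local Morse model by picking sequences $(x_n,\bar X_e^{t_n}(x_n))$ with $x_n\to x$ transversally to $W^s(z)$ and $t_n\to\infty$ suitably chosen. The induced bijection $H\leftrightarrow \dot W^s(z)\times \dot W^u(z)$ coincides with the restriction of the two projections of $M\times M$ to the two factors, realizing $H$ as the punctured sub-product claimed in the last sentence of the statement.

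For the orientation, I work in the Morse chart of Example \ref{Q}: coordinates $(a,b)\in\R^k\times\R^{n-k}$ at $z$ in which $W^s(z)=\{b=0\}$, $W^u(z)=\{a=0\}$ and $\bar X_e^t(a,b)=(e^{-t}a,e^{t}b)$, with $W^s$ and $W^u$ oriented respectively by $da$ and $db$ so that convention (\ref{or-stable}) reads $or(M)=da\wedge db$. Introduce the blow-down substitution $s=e^{-t}$, $c=e^{t}b$, which turns $cl(G_e)$ near $H$ into the image of the smooth chart
\[ (a,c,s)\longmapsto \bigl((a,sc),(sa,c)\bigr),\qquad s\ge 0, \]
in which $H=\{s=0\}$ is a genuine boundary component with coordinates $(a,c)\in \dot W^s(z)\times \dot W^u(z)$. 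Pulling the source-fibration orientation $dt\wedge da\wedge db$ through $dt=-ds/s$ and $db_i=s\,dc_i+c_i\,ds$, only the $s^{n-k}\,dc_1\wedge\cdots\wedge dc_{n-k}$ part of $db_1\wedge\cdots\wedge db_{n-k}$ survives once wedged with $dt$, giving
\[ dt\wedge da\wedge db=-s^{n-k-1}\,ds\wedge da\wedge dc. \]
For $s>0$ this represents the orientation class of $-ds\wedge da\wedge dc$; with outward normal $-\partial_s$, convention (\ref{or-coor2}) identifies the boundary orientation of $H$ with $da\wedge dc$, which is precisely the product orientation $or(\dot W^s(z))\wedge or(\dot W^u(z))$.

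The main obstacle is the orientation calculation. The naive limit $t\to\infty$ in the $M\times M$ topology collapses orbits onto $(x,z)$ and fails to detect $H$, so one must first perform the blow-down $s=e^{-t}$ in order to realize $cl(G_e)$ as a smooth manifold with boundary near $H$. The pulled-back volume form then carries a Jacobian factor $s^{n-k-1}$ which degenerates on $H$ but still defines an unambiguous orientation class, so one has to argue with orientation classes rather than literal forms. Once the blow-down chart is in place, no further signs appear beyond those enforced by (\ref{or-stable}) and (\ref{or-coor2}).
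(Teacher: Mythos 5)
Your proof is correct and is essentially the paper's: the blow-down chart $(a,c,s)\mapsto\bigl((a,sc),(sa,c)\bigr)$ is precisely the collar map $C(\ep,x,y)=(a,a')$ of the paper (with $s=\ep$, $a=x+\ep\vec{zy}$, $a'=y+\ep\vec{zx}$, and $t'=-\log\ep$). You simply carry out the Jacobian computation that the paper summarizes as ``by a computation in the Morse model ... the map is direct'' and ``making $\ep$ decrease makes $t'$ increase,'' making explicit the $s^{n-k-1}$ factor and the boundary-orientation step via convention (\ref{or-coor2}).
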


\nd {\bf Proof.} First, recall that $W^u(z)$ is oriented arbitrarily; it is also  co-oriented
so that \break$co$-$or(W^u(z))\wedge or(W^u(z))= or(M)$. By convention 
the stable manifold 
is oriented by the co-orientation 
of the unstable manifold. Thus, the right hand side of (\ref{or-graph})
has the orientation of $M$.

Now, take a pair $(x,y)\in \dot  W^s(z)\times\dot W^u(z)$ and a small 
$\ep>0$.
Set $a= x+\ep\vec{z y}$ in the affine structure of the Morse model  about $z$. The  orbit of
$a$ intersects the affine line $y+\R \vec{zx}$ in exactly one point $a'$ at some  time 
$t'$; we have $\bar X^{t'}_e(a)=a'$. So, for some small enough $\de$ and $0<\ep<\de$,
 we have a {\it collar map} 
$$ \begin{array}{ccc}
C: (0, \de) \times\dot W^s(z)\times\dot  W^u(z) &\longrightarrow& G_e\\
(\ep,x,y)&\mapsto & (a,a')
\end{array}
$$
which extends to a diffeomorphism  $\{0\}\times \dot W^s(z)\times\dot W^u(z)\cong H$.
By a computation in the Morse model, it is seen that, fixing $\ep$,
the map $\tau_e \circ C_{\ep}: (x,y)\mapsto a'$ is orientation preserving. 
Moreover, making  $\ep$ decrease
(which is the outgoing direction along the boundary)
makes $t'$ increase. 
Altogether, we have the desired isomorphism of orientations.
\bull\\

\nd{\bf Orientation and multi-intersection.}

Let $T$ be a generic 
tree with $d$ leaves, an admissible decoration $\mathcal D$ and entries $x_1,...,x_d$. Here we consider the case
where  $e$ is the \emph{trunk} of 
$T$ (see right after Definition \ref{tree}).
Suppose 
$z\in crit _*f\cup crit_{*-1}^+ f_\partial$ has a degree 
(that is its value of $*= \dim W^s(z)$)  which is equal 
to $\dim I(T, \mathcal D, x_1,...,x_d)$. Let  $\si_e: G_e\to M$ be the source map.

Then $z$ determines a codimension-one stratum $H$ (possibly empty), made of orbits broken at $z$, in the closure of 
$W^s(e)\subset M^{\times n(e)}$. This $H$ is mapped by $\si_e$ transversely to the unstable manifold $W^u(z)$
since the decoration is admissible. By the dimension assumption, the intersection 
$\si_e^{-1}(W^u(z))\cap H$ is made of a finitely many signed points.

\begin{defn}\label{multiplicity}
The sum $\mu(z)$ of the above signs is named the \emph{(algebraic) multiplicity} of $H$ as a boundary component of 
$W^s(e)$. It is also  the coefficient of $z$ in the chain represented by $I(T, x_1, ... ,x_d)$
(an admissible decoration being implicit---see subsection \ref{evaluation}).
\end{defn}

Finally, let $T_0$ be a strict sub-tree of $T$ with $k$ leaves, 
let $e$ be the trunk of $T_0$, an interior edge of $T$. This time,
$z$ is assumed to generate a codimension-one stratum $H$ in the closure of 
$W^s(e, x_{j+1}, ..., x_{j+k})$ where $j$ is the label of the leaf which lies just to the left of 
the leaves of $T_0$.

\begin{prop}\label{or-I(T)}
Consider the above setting. 
Then $H$ contributes to 
a boundary stratum in the closure of the multi-intersection $I(T)$ with the multiplicity $(-1)^{\ep_j}\mu(z)$ where 
\begin{equation}\label{sign-partial}
\ep_j= n+j-1+\sum_{i=1}^j (\dim W^s(x_i)-n)\,.
\end{equation}
\end{prop}

Note the difference between Definition \ref{multiplicity} and Proposition \ref{or-I(T)}: the breaking of orbits
takes place just below (resp. above) the considered multi-intersection in the first  (resp.  second) case.  In the latter, $H$ contributes to the differential $\p^+$ of the chain that $I(T, x_1, ..., x_d)$ represents
in the complex $C^+_*$.\\

\proof It consists of a generalization to fiber products of the sign given in the case of a product by Proposition
\ref{pro-ori}. A codimension-one stratum remains so through fiber products of transverse mappings and transverse
intersections.

Note that the sign we are interested in is invariant 
by sliding the edges (see the \emph{transition} move on Figure \ref{Figure4}) as  long as 
the edges to the left of the leftmost complete path (that is, from the root to a leaf) which contains $e$ are not involved.

After a well-chosen sequence of such transitions, $T$ has the following form: $T(v_1)=T_\al\mathop{\vee}\limits_{v_1} T_\beta$.
Here, $v_1$ is the first interior vertex above the root of $T$ and $T(v_1)$ stands for the union of edges above $v_1$;
 the $\vee$ means the bouquet; $T_\al$ is a tree with $j$ leaves
and the edge root of $T_\beta$ is $e$.  
In that case, the multi-intersection becomes a 
usual---not iterated---fiber product and its left factor
has a dimension equal to $\dim I(T_\al, x_1, ...,x_j)+1$. The dimension formula (\ref{dimension}) and Proposition 
\ref{pro-ori} yield the desired sign.\bull

\nd {\bf Orientation and gluing.} Here we go back  
to the setting of subsection \ref{gluing}. We will prove the following statement:
\begin{prop}\label{opposite-or}
The two multi-intersections $I(T')$ and $I(T'')$, equipped with their natural
orientations, give $I(T)$ two opposite boundary orientations. In other words, the amalgamation 
 $I(T')\mathop{\cup}\limits_{I(T)} I(T'')$ is made in the category of oriented manifolds.

\end{prop}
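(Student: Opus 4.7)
The plan is to compute the boundary orientation induced on $I(T)$ by $I(T')$ and by $I(T'')$ directly from the fiber-product definitions, and to verify that the two resulting orientations on $I(T)$ are opposite. By Proposition~\ref{prop-gluing}, both $I(T')$ and $I(T'')$ contain $I(T)$ as a boundary component, in each case arising from the diagonal $\{t=0\}$ of the graph $G_{e'}$ (respectively $G_{e''}$). Since $or(G_e)= or([0,\infty))\wedge or(M)$ and the outward normal to the diagonal is $-\partial_t$, convention~(\ref{or-coor2}) endows the diagonal with the boundary orientation $-or(M)$ inside $G_e$.

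I will then propagate this sign through the iterated fiber products. For $T'$, the construction at the vertex $v_0$ reads $I(T')=W^s(e')\times_M W^s(e_3)=(G_{e'}\times_M I(v'))\times_M W^s(e_3)$, with the factor $G_{e'}$ carrying the boundary in the \emph{first} position of each fiber product. Two successive applications of Proposition~\ref{pro-ori}(1), combined with the associativity of fiber-product orientations stated before Proposition~\ref{pro-ori}, identify the boundary orientation of $I(T')$ with $\pm or(I(T))$, where $I(T)$ is oriented naturally as $I(v')\times_M W^s(e_3)$.

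For $T''$, the analogous calculation uses $I(T'')=W^s(e_1)\times_M W^s(e'')=W^s(e_1)\times_M (G_{e''}\times_M I(v''))$, where now the boundary lies in the \emph{second} factor of the outermost fiber product at $v_0$. This forces the use of Proposition~\ref{pro-ori}(2) at that step, which introduces the extra sign $(-1)^{\dim W^s(e_1)}$ relative to the calculation for $T'$. Re-expressing the resulting boundary orientation in terms of $or(I(T))$ through the right-iterated form $W^s(e_1)\times_M I(v'')=I(T)$ (again invoking associativity), the asymmetry between the two constructions combines with the intrinsic sign $-or(M)$ of the diagonal in $G_e$ to give the opposite of the boundary orientation found for $T'$.

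The main obstacle is the sign bookkeeping through the iterated fiber products: one must distinguish carefully at each step between applications of Proposition~\ref{pro-ori}(1) (boundary in the first factor) and Proposition~\ref{pro-ori}(2) (boundary in the second factor, which contributes the extra $(-1)^{\dim E_1}$), and must repeatedly appeal to associativity of the fiber-product orientation to bring both final expressions into a common form $\pm or(I(T))$. The structural reason the two signs end up opposite is the asymmetry between $T'$ and $T''$ noted in the setup~\ref{gluing}: the collapsed edge $e'$ is the \emph{left} branch emanating from $v_0$ in $T'$, whereas $e''$ is the \emph{right} branch in $T''$, and this positional difference is precisely what generates the sign needed to make the amalgamation $I(T')\cup_{I(T)}I(T'')$ an oriented manifold.
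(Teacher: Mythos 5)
Your plan is structurally sound in outline—you correctly identify that $I(T)$ sits as a boundary in both $I(T')$ and $I(T'')$, that the diagonal of $G_e$ carries boundary orientation $-or(M)$, and that the decisive asymmetry is that the collapsed edge $e'$ hangs off the left branch at $v_0$ in $T'$ but $e''$ hangs off the right branch in $T''$. However, the sign bookkeeping in your last paragraph is where the argument has a genuine gap, and it is precisely the step where the paper takes a completely different route.

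When you apply Proposition~\ref{pro-ori}(2) to $I(T'')=W^s(e_1)\times_M W^s(e'')$ you pick up the factor $(-1)^{\dim W^s(e_1)}$. This sign is not a universal $-1$: $\dim W^s(e_1)$ depends on the entry $x_1$ and can be of either parity. For the conclusion to hold, this dimension-dependent sign must be exactly compensated elsewhere, and you assert (but do not show) that it is, via ``the asymmetry between the two constructions combines with the intrinsic sign $-or(M)$.'' But the $-or(M)$ contribution from the diagonal is common to the $T'$ and $T''$ calculations and therefore cannot produce a relative sign. If one carries the bookkeeping through literally with Propositions~\ref{pro-ori}(1)--(2) and the Associativity Proposition taken at face value, the relative sign between the two boundary orientations comes out as $(-1)^{\dim W^s(e_1)+1}\cdot(-1)^{-1}=(-1)^{\dim W^s(e_1)}$, which is $+1$ when $\dim W^s(e_1)$ is even, contradicting the claim. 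The resolution lies in the re-association step $W^s(e_1)\times_M\bigl(W^s(e_2)\times_M W^s(e_3)\bigr)\cong\bigl(W^s(e_1)\times_M W^s(e_2)\bigr)\times_M W^s(e_3)$ that you ``invoke'' without comment: with the explicit co-orientation convention set out before Proposition~\ref{pro-ori}, this step is \emph{not} a free identification but carries its own dimension-dependent sign, and this hidden associator sign is exactly what is needed to absorb $(-1)^{\dim W^s(e_1)}$. Since you leave it implicit, the argument as written does not establish the proposition.

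The paper sidesteps all of this by not propagating Proposition~\ref{pro-ori} through the iterated fiber products at all. Instead it writes a point of $I(v,T')$ and of $I(v,T'')$ in explicit coordinates as in the proof of Proposition~\ref{prop-gluing}, with a source--target pair $(s',t')$ (resp.\ $(s'',t'')$) coming from $G_{e'}$ (resp.\ $G_{e''}$), and observes that restricting to $I(v,T)$ forces the identifications $s''=t'$, $t''=s'$. Since $X_{e'}=X_{e''}$ may be assumed (orientation is insensitive to the decoration), this is a time reversal in the one-dimensional flow coordinate, while the displacement of the even-dimensional block $(s,t)$ within the coordinate string has no effect on orientation. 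The single odd-dimensional reversal accounts for the opposite boundary orientations with no dimension counting at all. If you want to keep your fiber-product route, you would need to first make precise (with signs) the associativity identity before Proposition~\ref{pro-ori}, verify that it produces the compensating $(-1)^{\dim W^s(e_1)}$, and only then conclude; absent that, the coordinate argument of the paper is both shorter and immune to the parity issue.
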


\proof First we observe that the decoration has no effect on  orientation matter. So,
without loss of generality, we may assume $X_{e'}=X_{e''}$ (notation of Figure \ref{Figure4}).
We are now going to use formulas (\ref{I(v,T)}) and (\ref{I(v,T')}) from  the proof of Proposition \ref{prop-gluing}.
Each factors in the iterated fiber product diagram (\ref{I(v,T)}) is contained in some $M^{\times q}$ 
and the maps in the diagram of fiber product 
are induced by the first coordinate in each factor. In coordinates,
a point $a\in I(v,T)$
 reads:
 \begin{equation}\label{I(v,T)-coord}
 \begin{array}{lc}
\quad  &a=\left\{(x_1,\ldots, x_k)(y_1,\ldots, y_\ell)(z_1,\ldots,z_m)\right\}\\
& x_1=y_1=z_1
 \end{array}
 \end{equation}
 where each coordinate $x_i, y_i$ or $z_i$
 denotes a point in $M$.
 Any point $a'$ 
 in $I(v,T')$ reads 
 \begin{equation}\label{I(v,T')-coord}
 \begin{array}{lc}
 \quad& a'=\left\{(s',t') (x_1,\ldots, x_k)(y_1,\ldots, y_\ell)(z_1,\ldots,z_m)\right\}\\
& t'=x_1=y_1,\ s'= z_1,
 \end{array}
 \end{equation} 
 where the new coordinates $(s',t')$ are those of $G_{e'}$, source and target.
Similarly, any point $a''$ 
in $I(v,T'')$ reads:
  \begin{equation}\label{I(v,T'')-coord}
  \begin{array}{lc}
 \quad&a''=\left\{ (x_1,\ldots, x_k)(s'',t'')(y_1,\ldots, y_\ell)(z_1,\ldots,z_m)\right\}\\
 &s''=x_1, \ t''=y_1= z_1,
 \end{array}
 \end{equation}
 where the coordinates $(s'',t'')$ are those of $G_{e'}$, source and target.
 When comparing formulas (\ref{I(v,T')-coord}) and (\ref{I(v,T'')-coord})  at a point of $I(v,T)$, we get $s''=t',\ t''=s'$. 
  This corresponds to reversing the time of the flow 
 of $X_{e'}=X_{e''}$. Then, the time is the only variable whose orientation is changed. 
 This is the reason why the orientation of $I(T)$ changes depending on $I(T)$ is seen as 
 a boundary of $I(T')$ or $I(T'')$.
 The change of the place of the couple (source, target) has no effect on the orientation 
 since it is an equidimensional couple.
 \bull
 
 \section{$\A$-structure}\label{A-infty}
 In this section, we exhibit how one can construct an $\A$-structure on the Morse complex $A=C_*(f,X^+)$  whose 
 first operation $m_1$ coincide with the differential $\partial^+$. 
The grading is now defined  by setting $|x|:=n-\dim W^s(x)$ for every critical point $x\in crit f\cup crit^+f_\p$. 
 Note this grading is {\it cohomological}, that is, the degree of the differential $m_1=\partial^+$ is $+1$.
 
 One fixes a  \emph{coherent} sequence $\g_\infty$ in the group $G$. Every Fukaya tree $T$ is endowed with 
 the $\g_\infty$-standard decoration $\mathcal D(T)$.
   So, for  any tree $T$ with $d\geq 2$ leaves and any sequence of $(d+1)$ critical points  $x_1,\ldots x_d,x_{d+1}$ 
   (with possible repetition) we have the multi-intersection  $ I(T ,x_1,\ldots x_d; x_{d+1})$  defined by  the following fiber product (compare to the $T$-evaluation map  defined at the end of subsection \ref{evaluation}):
 \begin{equation} \label{Tevaluation}
 	I(T, x_1,\dots,x_d; x_{d+1}):= 
	\lim\left(I(v_{root}^1)\mathop{\longrightarrow}\limits^{^{p_{root}}} M\mathop{\longleftarrow}\limits^{j}
	W^u(x_{d+1},X^+)\right)\
 \end{equation} 
 We recall that $v^1_{root}$ is the terminal vertex of the edge originating at the root and the associated generalized 
 intersection is defined inductively in Section \ref{towards}.
 Since $\mathcal D(T)$ is admissible, this set is a manifold.
 Using the dimension formula of Proposition \ref{dim-formula}, we conclude that  its dimension is 
\begin{equation}\label{equ-dimen}
\dim I(T,x_1,\dots, x_d; x_{d+1})=  
d-2+\vert x_{d+1}\vert-\sum_{i=1}^k \vert x_i\vert .
\end{equation}

Therefore the dimension of $I(T,x_1,\cdots x_d;x_{d+1})$ is zero if and only if 
$$
\vert x_{d+1}\vert =\sum_{i=1}^k \vert x_i\vert + 2-d
$$
In what follows, we denote by $\mathcal T^0$ (resp. $\mathcal T^0_d$) the set of generic Fukaya trees (resp. with $d$
leaves).
For $d\geq 2$,  we define the linear maps $m_d:A^{\otimes d}\to A$ by  
\begin{equation}\left\{
\begin{split}
&m_d(x_1,\dots, x_d):=\\
&(-1)^{\sum_{s=1}^d  (d-s)\vert x_s\vert } 
 \sum _{T \in \mathcal T^0_d}\left[\sum_{
|y|=2-d+\sum |x_i|} \#I(T,x_1,\dots,x_d; y) y\right].
\end{split}\right.
\end{equation}
One should think of $I(T,x_1,\dots,x_d; y)$ as an oriented zero-dimensional manifold and\hfill\break
$\# I(T,x_1,\dots,x_d; y)$ is the algebraic number of signed points in this manifold.
 It is clear from the definition that the degree of $m_d$ is  $2-d$.

\begin{rien}{\bf Geometric definition of the first operation.} 
 {\rm So far, we have not considered trees with just one leaf. Nevertheless,
for $x\in crit f\cup crit^+ f_\partial$,
one can define $m_1(x)$ geometrically in the following way.
 From the compactification of $W^s(x,X^+) $ one extracts the 
\emph{frontier} $F^s(x)$ which is the complement of  $W^s(x,X^+) $  in its closure.\footnote{ Here, $X^+$ could be replaced with any $C^\infty$-approximation.} As $X^+$ is Morse-Smale,
$F^s(x)$ is transverse to $W^u(y, X^+)$ for every $y\in crit f\cup crit^+ f_\partial$. 
When $|y|=|x|+1$, one defines the 0-dimensional intersection manifold 
$I(x; y):= F^s(x)\cap W^u(y, X^+)$. As it is oriented, it is made of a finite set of signed points. Then one defines
\begin{equation}
m_1(x)= \sum_{|y|=|x|+1} \# I(x; y)y\,.
\end{equation}  
}
\end{rien}

\begin{thm}\label{thm-Ainfty}
$ (A,m_1,m_2,\dots )$ is an $\A$-algebra.
\end{thm}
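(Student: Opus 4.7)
The plan is to derive each $A_\infty$-relation on inputs $(x_1,\ldots,x_d)$ and output $y$ from the fact that a compact oriented one-manifold has vanishing signed boundary. Fix $|y|$ so that, by the dimension formula (\ref{equ-dimen}), every moduli space $I_T(x_1,\ldots,x_d;y)$ with $T\in\mathcal T^0_d$ is one-dimensional, and form the amalgamation
\[
\mathcal M \;:=\; \bigcup_{T\in\mathcal T^0_d} I_T(x_1,\ldots,x_d;y).
\]
Across each codimension-one wall $T'\to T\leftarrow T''$ of $\mathcal T_d$, Proposition \ref{prop-gluing} produces common boundary embeddings $I_T\hookrightarrow I_{T'}$ and $I_T\hookrightarrow I_{T''}$ compatible with their stratified compactifications. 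The coherence and transition-compatibility of the chosen system of decorations (Proposition \ref{rmq-d-infty} together with Proposition \ref{transition_prop}) ensure that the two decorations induced on $T$ lie in the same isotopy class of admissible ones, and Proposition \ref{opposite-or} asserts that $T'$ and $T''$ induce \emph{opposite} boundary orientations on $I_T$. Consequently $\mathcal M$ extends smoothly across every interior wall as a compact oriented $1$-manifold, and transition contributions cancel pairwise.

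The remaining boundary is described by Proposition \ref{strat-multi}: each codimension-one stratum corresponds to a subtree $T_0\subset T$ with $s$ consecutive leaves $x_{r+1},\ldots,x_{r+s}$, its trunk edge $e$, and a zero $z$ of the pseudo-gradient $X_e$ satisfying the Morse-index constraint \ref{strat-multi}(i). Part (ii) of that proposition, combined with the dimension formula, shows that the stratum is a finite set whose signed count equals
\[
\#\,I_{T_0}(x_{r+1},\ldots,x_{r+s};\,z) \cdot \#\,I_{T_1}(x_1,\ldots,x_r,\,z,\,x_{r+s+1},\ldots,x_d;\,y),
\]
where $T_1$ is the tree obtained from $T$ by collapsing $T_0$ to a single new leaf decorated by $z$. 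Coherence (Remark \ref{rem-coherence}) implies that these counts coincide with those defining $m_s$ and $m_{r+1+t}$ with $t=d-r-s$, independently of the ambient tree. Summing over all $(r,s,T_0,T_1,z)$ with $s\geq 2$ realizes the interior $A_\infty$-compositions; the case $s=1$, corresponding to broken orbits in a leaf edge, contributes the $m_1$-terms via the standard Morse-theoretic analysis of Proposition \ref{strat_vstab} applied to the closure of $W^s(x_i,X_{e_i})$. The vanishing of the oriented boundary of $\mathcal M$ thus yields
\[
\sum_{r+s+t=d,\,s\geq 1}\,\pm\, m_{r+1+t}\bigl(x_1,\ldots,x_r,\, m_s(x_{r+1},\ldots,x_{r+s}),\, x_{r+s+1},\ldots,x_d\bigr)\;=\;0
\]
in the coefficient of $y$.

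The main obstacle, and the only step requiring a genuine computation rather than a direct quotation of earlier results, is the sign verification: one must check that the total sign attached to each boundary contribution equals the Stasheff sign $(-1)^{r+st}$. Three ingredients combine to produce it: (a) the boundary-orientation sign $(-1)^{\varepsilon_j}$ of Proposition \ref{or-I(T)}, with $j=r$ recording the position of the broken edge in the leaf ordering; (b) the orientation of the factorization of the stratum as an iterated fiber product, controlled by Proposition \ref{pro-ori} and the associativity of fiber-product orientations; and (c) the explicit prefactor $(-1)^{(d-1)|x_d|+\cdots}$ hard-coded into the definition of each $m_d$. A direct Koszul bookkeeping, using the dimension formula to track the degrees of the intermediate manifolds, shows that these signs assemble into precisely $(-1)^{r+st}$, with $s$ the arity of the inner bracket and $t=d-r-s$. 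Everything else follows directly from the compactification, coherence, and orientation results of Sections \ref{compact} through \ref{orientations}.
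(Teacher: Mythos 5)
Your proof takes essentially the same route as the paper: regard the $T$-evaluation $I_T(x_1,\dots,x_d;z)$ as a one-dimensional oriented moduli space, sum its signed boundary over all generic $d$-trees, cancel the diagonal/wall contributions pairwise via Propositions~\ref{prop-gluing} and~\ref{opposite-or}, and read off the $\A$-relation from the broken-orbit strata with signs from Proposition~\ref{or-I(T)} together with the prefactor built into~$m_d$. The amalgamation into $\mathcal M$ is just a repackaging of the paper's cancellation of what it calls Type~B boundary components.

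There is, however, one genuine gap. You account for boundary strata coming from collapsed interior edges (the walls) and from broken orbits, but never for codimension-one strata lying along $\partial M$. These cannot be dismissed by fiat: $M$ has nonempty boundary, the $D$-adapted pseudo-gradients are tangent to $\partial M$ near the Dirichlet critical points, so the closures of stable manifolds and generalized stable manifolds genuinely acquire codimension-one strata on $\partial M$. Moreover Proposition~\ref{strat-multi}(2), which you invoke to classify the ``remaining boundary,'' explicitly restricts itself to strata \emph{other than} those coming from $\partial M$ or the diagonal; so the statement you cite does not tell you these are absent. The paper closes this hole by a separate observation (its Type~C analysis): the root edge $e_{root}$ is decorated by $X^D$ itself, the test unstable manifold $W^u(z,X^D)$ lies in $\mathrm{int}\,M$ away from $z$, and therefore the $T$-evaluation $\langle I_T(x_1,\dots,x_d),z\rangle$ meets no $\partial M$-strata. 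Without that observation the identity $\sum_P \mathrm{sign}(P)=0$ would carry extra unaccounted terms and the $\A$-relations would not follow. You should add this step. (A secondary, smaller point: the ``remaining boundary'' you describe via proper subtrees $T_0\subsetneq T$ and the $s=1$ leaf case omits the $r=t=0,\ s=d$ contribution $m_1(m_d(\cdots))$, which arises from broken orbits in the closure of $W^u(z,X^D)$ rather than from an interior or leaf edge; the paper's account of Type~A is similarly terse here, but the term must be present for the relation to close.)
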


\begin{proof}
The $\A$-relations read for every $d>0$:
\begin{equation} \label{def-A1}
\sum_{j,k, l} (-1)^{j +k\ell} m_{j+1+\ell}(1^{\otimes j}\otimes m_k \otimes 1^{\otimes l})=0
\end{equation}
where the sum is taken over all non-negative integers $j,k,l$ such that $j+k+l= d$.

When putting entries $(x_1,\dots, x_d)$, new signs appear according to Koszul's rule:
\begin{equation}
\begin{split}
&(1^{\otimes j}\otimes m_k)(x_1,\ldots, x_j,x_{j+1},\dots, x_{j+k})=\\
 &(-1)^{(|x_1|+\dots+|x_j|)|m_k|}
\bigl(x_1,\ldots, x_j, m_k(x_{j+1},\dots, x_{j+k})\bigr),
\end{split}
\end{equation} 
and Identity (\ref{def-A1}) becomes:
\begin{equation} \label{def-A2}
\sum_{j+k+l=d} (-1)^\ep 
m_{j+1+l}(x_1,\dots,x_j, m_k(x_{j+1},\dots, x_{j+k}),x_{j+k+1},\dots, x_d)=0,
\end{equation}
where $\ep=j +kl+(2-k)(\sum_{i=1}^j|x_i|) $.

By the very definition of the $m_k$'s, 
the above $\A$-relations  are  equivalent  to 
the following  identities
\begin{equation} \label{eq-A-proof1}
 \sum_{j, k,T_{\alpha}, T_{\beta}}\sum_{ y} (-1)^{\ep'} 
 \# I(T_{\alpha},x_{j+1},\dots,x_{j+k};y)\, \#I(T_{\beta},x_1,\dots, x_j,y,x_{j+k+1},\dots x_{d}; z)=0
\end{equation}
for all $d \geq 1$ and all sequence $(x_1,\dots, x_{d}, y, z)$ of critical points  with $|y|=2-k+\sum_{i=j+1}^{j+k} |x_i|$, 
$|z|=3-d +\sum_{i=1}^{d}|x_i|$ and $\ep'=\vert x_1\vert +\cdots \vert x_j \vert -j$.
In this sum, $T_\alpha$ is a generic tree  with $k$ leaves and $T_\beta$ is
a generic tree with $d-k+1$ leaves. By (\ref{equ-dimen}), the manifold 
$I(T_{\beta}, x_1,\dots, x_j,y,x_{j+k+1},\dots x_{d}; z)$ is 0-dimensional.

Note that, from  
$|z|=3-d+\sum_{i=1}^{d}|x_i|$, it follows that for every generic tree $T$ the
multi-intersection   $I(T, x_1,\dots,x_{j+k+l}; z)$ is  one-dimensional. 

The proof of 
(\ref{eq-A-proof1}) will follow from  analysing the frontier of this \emph{oriented} manifold in its compactification
(we know from Appendix \ref{appendix-comp} that the multi-intersections are compact manifolds with 
$C^1$ conic singularities.)

We fix a generic tree $T$ with $d$ leaves and consider the compact 
1-dimensional submanifold with conic singularities
 $ cl(I(T,x_1,\dots, x_d;z))\subset M^{\times (d-1)}$.  
  By blowing up the singular points, such a manifold can be 
 thought of as a manifold with boundary where some boundary points are identified. 
 Such a point $P$ is  equipped with a sign
 which is the sum of the  boundary-orientation signs of the inverse images of $P$ in the above blowing up, which is itself oriented.  
 Therefore, we have:

\begin{equation} \label{equ-sign}
	\sum_{P\,\in\, \partial \bigl(cl(I(T,x_1,\dots, x_d;z))\bigr) }sign(P)=0
\end{equation}

By iterating Proposition \ref{comp-graph}, 
the boundary components of the closure $cl(I(T,x_1,\dots, x_d;z)\bigr)$
 are divided into three types:\\

\noindent {\sc Type A:}  The boundary components coming from the broken orbits in the compactification of
the generalized stable manifold $W^s(e, X_e)$. Here, $e$ is  
an interior edge in the tree $T$ and $X_e$ is its decoration.  Such
a codimension-one stratum involves some critical point $y$ and its 
 invariant manifolds with respect to $X_e$. 
 Therefore, it is of the form %
$$I(T_{\alpha},x_{j+1},\dots,x_{j+k};y)   \times I(T_{\beta},x_1,\dots, x_j,y,x_{j+k+1},\dots x_{d}; z),\  
0\leq k\leq d.$$ The first (resp. second) factor in this product  comes from the unstable (resp. stable) manifold of $y$.
The tree $T$ is equal to the {\it connected sum}
 $$T=T_{\alpha} \#_{j+1}T_{\beta}$$ 
where the root of $T_{\alpha}$ 
is glued to the  $(j+1)$-th leaf of $T_\beta$.\\

 \noindent {\sc Type B:} The boundary components of the form $I(T/e,x_1,\dots ,x_d;z)$
 where $e$ is an interior edge of $T$ and $T/e$ denotes the tree obtained from $T$ by collapsing $e$ to a point. 
 They are induced by the diagonal of $M\times M$ except over the zeroes of the vector field  $X_e$  
 which decorates $e$.\\
 
 \noindent{\sc Type C:} The boundary components which are induced by $\p M$. In general, a stable 
 manifolds has orbits coming from $\partial M$.  
  Actually, the type-C components are empty in the considered multi-intersection $I(T,x_1, \dots, x_d; z)$. Indeed,
  by construction, the unstable manifold $W^u(z, X^+)$ lies in the interior of $M$ except very near $z$
  if $z\in crit ^+f_\p$. Thus, 
  the multi-intersection $I(T,x_1, \dots, x_d; z)$, that is
   the {\it evaluation} $< I(T,x_1, \dots, x_d), z>$, 
   has no type-C boundary components.\\

 Therefore the identity (\ref{equ-sign}) splits into the sum of two terms 
  \begin{equation} 
  S_A+S_B=0 
\end{equation}
where $S_A$ (resp. $S_B$) is the contribution of the type-A (resp. type-B) components. Note that $S_A$ is exactly
the left handside of Equation (\ref{eq-A-proof1}) since $T$, $j$ and $k$ determine $T_\al$ and $T_\beta$.
 Therefore, we are reduced to prove the nullity of $S_B$.

By Proposition \ref{prop-gluing}, a type $B$ boundary component $I({T/e},x_1,\cdots,x_d; z)$  appears as a boundary component of exactly one another one-dimensional intersection submanifold  $I({T'},x_1,\cdots,x_d; z)$  where $T'$ is the unique generic tree, distinct from $T$, obtained from $T/e$  by an expansion at its unique degree-$4$ vertex 
(see Figure \ref{Figure4}). 
  Moreover, by Proposition 
   \ref{opposite-or}, the induced orientations are opposite.  Therefore,
  in the sum $S_B$ 
   these two  terms cancel each other out. \\
   
   \nd{\sc Checking of the signs.} We apply Proposition \ref{or-I(T)} which gives us 
   the following sum of chains of geometric nature (without evaluating):
   \begin{equation}
   \partial^+ I(T,x_1, \dots, x_d)=
    \sum_{j, k, y} (-1)^{\ep_j} \#I(T_\al,x_{j+1},\dots, x_{j+k}; y)\,  \partial ^+I(T_\beta,x_1, \dots,y,x_{j+k+1},\dots, x_d).
    \end{equation} 
   Here, $j$ varies from 1 to $d-1$; $k$ from 1 to $d-j$; $y$ is a critical point such that 
    $|y|= 2-k+\sum |x_i|$ and the geometric sign is the one given by formula \ref{sign-partial}, that is,
  $$\ep_j= n+j-1+ \sum_{i=1}^j (\dim W^s(x_i)-n)=n+j-1-\sum_{i=1}^j\vert x_j\vert.$$

\bull
\end{proof}

\section{Morse concordance and homotopy of $A_d$\,-structures}\label{homotopic-structures}

We have seen that the operations $m_1, m_2, \dots$ which define an $A_\infty$-structure on
the complex  $A:=C_*(f,X^+)$
are determined by the choice of a family of coherent 
 decorations for every Fukaya tree $T$. 
Recall that a decoration of an edge $e$
is a vector field $X_e$ approximating $X^+$. In particular, it lies in the same connected component 
of Morse-Smale vector fields.

Assume we have two coherent sequences $\g_\infty$ and $\g'_\infty$ in the group $G= Di\!f\!f_0(M)$
and their associated standard
decorations 
$\{\mathcal D(T)\}_T$ and $\{\mathcal D'(T)\}_T$, decorating all 
Fukaya trees. In general, these two families give  rise to two distinct $\A$-structures $(m_1,m_2,\dots)$ and
$(m'_1,m'_2,\dots)$. We are going to show that these two structures can be linked by a {\it homotopy} thanks
to multi-intersections over the product manifold $\hat M:= M\times [0,1]$ which is a manifold with boundary and corners.
Note that the complex $C_*(f,X^+)$
 is kept unchanged; in particular, $m_1=m'_1$.
A multi-intersection $\hat I(T)$ over $\hat M$ associated with a decoration 
$\hat{\mathcal D}(T)$ will be thought of as a cobordism from its trace over $M\times\{0\}$ to its trace over $M\times\{1\}$.
Such a family of  cobordisms will be called a {\it geometric homotopy}. The expression {\it Morse concordance}
from the section title
emphasizes the fact that the underlying manifold is a product $M\times[0,1]$ equipped with a function without critical 
points in its interior.

Here, we are inspired by Conley's continuation map \cite{conley} that we have extended to the $\A$-case. 
The case of the 
Morse complex is discussed in \cite{Schw} and  \cite{Hutch1} as a prelude to the (infinite dimensional) case  of Floer 
homology. In fact, Andreas Floer \cite{floer} had first evoked the idea for the infinite dimensional Morse Theory.  
The invariance of the Morse homology was proved earlier using other methods.

 \begin{rien} {\bf Construction of a Morse concordance.}\label{cobordism}
 
{\rm For simplicity, we first restrict to the case where $\partial M$ is empty. Then, $\hat M:= M\times [0,1]$
 is a manifold with boundary. The general
 case will be sketched in Remark \ref{boundary-corners}. 
 When $\partial M=\emptyset$, we consider a Morse-Smale 
 pseudo-gradient $X$
 adapted to the Morse function $f$. 
 
 We first build a Morse function $\hat f$ on $\hat M$ with no critical points in the interior of $\hat M$ 
 whose restriction to $M_i:= M\times\{i\}$, $i= 0, 1$, reads $\hat f\vert_{M_i}= f+c_i$ where $c_i$ is some constant.
  More precisely, one requires the critical points of $\hat f\vert_{M_0}$ to be of type $+$ 
  and those of 
  $\hat f\vert_{M_1}$ to be of type $-$. 
 The pseudo-gradient vector 
 fields adapted to $\hat f$  are required to be tangent to the boundary. 
  This needs a slight modification with respect to the Morse theory we have considered so far. 
  }
 \end{rien}
 Let $h: \R\to \R$ be the Morse function defined by $h(t)=(2t-1)^3 -3(2t-1)$; its critical points are $t=0, 1$.
 For $(x,t)\in \hat M$, set $\hat f(x,t)= f(x)+ h(t)$. 
 If $a$ is a critical point in $M_0$ (resp. in $M_1$), we have:
 \begin{equation} 
 Ind(\hat f, a)= Ind(f,a)+1\quad(\text{resp. } =Ind(f,a))\,.
 \end{equation} If $\hat X$ is a pseudo-gradient on $\hat M$ adapted to $\hat f$ and tangent to $M_0\cup M_1$,
 depending on $a\in M_0$
 (resp.  $a\in M_1$),  
 the stable manifold $W^s(a, \hat X)$ (resp. the unstable manifold $W^u(a, \hat X)$)
 meets the interior of $\hat M$; on the contrary, the unstable (resp. stable) manifold
  lies entirely in $M_0$ (resp. in $M_1$).

 The critical points of $\hat f$ in $M_0$ will serve as {\it entries}; those lying in $M_1$ will be used as {\it test data}.  
 Consider now an edge $e\subset T$ and its two decorations $X_e\in \mathcal D(T)$ and $X'_e\in \mathcal D'(T)$.
 Since $X_e$ and $X'_e$ are approximations of the same Morse-Smale vector field $X^+$ on $M$
  it is possible to join them by a path $\left(X^t_e\right)_{t\in [0,1]}$ of Morse-Smale vector fields and 
  form the vector field $\hat X_e$ on $\hat M$ defined by 
 \begin{equation}
 \hat X_e(x,t):= X^t_e(x)+\nabla h(t).
 \end{equation}
 This is a baby case of a method initiated by A. Floer.\footnote{ Floer \cite{floer} has introduced this method for finding the 
 so-called {\it continuation morphism} which connects two (Floer) complexes built from different  data.} 
 Assume moreover that the path $\left(X^t_e\right)_{t}$ is stationary for $t$ close to 0 and 1 in order that $X_e$
is adapted to $\hat f$ near each critical point.
 Generically on the collection of  paths $\left(X_e^t\right)_{t\in [0,1]}, e\subset T$, some transversality conditions
 may be fulfilled which allow us to construct recursively the following (see Section \ref{towards}):
 \begin{itemize}
 \item the generalized stable manifolds $\hat W^s(e)$ associated with the edges $e$ of $T$,

\item the  multi-intersections  $\hat I(v)$ of stable manifolds associated with the vertices $v$ of $T$,
 \end{itemize} 
both of them being transversely defined. 
In other words, the decoration $\hat{\mathcal D}(T)$ made of the collection $\{\hat X_e\}_{e\subset T }$ is 
chosen admissible (Definition \ref{requirements}). 

 Then,
for every vertex $v$ in $T$ 
the manifolds $\hat I(v, \hat{\mathcal D}(T))$ are transverse to  $p_1^{-1}(M_i), i=0,1$, where $p_1$ denotes the
first projection $\hat M^{\times n(v)}\to \hat M$. Thus, we have proved the following:

\begin{prop} For every vertex of $T$,
the multi-intersections $\hat I(v, \hat{\mathcal D}(T))$ is a cobordism from 
$I(v, \mathcal D(T))$ to $I(v,\mathcal  D'(T))$. This cobordism extends to a stratified cobordism between 
their respective compactifications.
\end{prop}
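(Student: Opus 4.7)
\medskip

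\noindent\textbf{Proof proposal.} The plan is to argue inductively on the tree structure, exactly mirroring the recursive construction of Section \ref{towards}, but now carried out in $\hat M$ rather than $M$. The key observation underlying everything is that $\hat X_e$ is tangent to $M_0$ and $M_1$ (since $\nabla h$ vanishes at $t=0,1$), and by construction $\hat X_e\vert_{M_0}=X_e$ and $\hat X_e\vert_{M_1}=X'_e$ under the identification of $M_i$ with $M$. Consequently, the flow of $\hat X_e$ preserves $M_0$ and $M_1$ and restricts on them to the flows of $X_e$ and $X'_e$ respectively.

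First I would establish the boundary identification at the level of a single edge. For $a$ a critical point of $\hat f$ in $M_0$, the local stable manifold $W^s_{loc}(a,\hat X_e)$ meets $M_0$ in $W^s_{loc}(a,X_e)$, and using positive completeness of the flow, the global stable manifold satisfies $W^s(a,\hat X_e)\cap M_0=W^s(a,X_e)$ — and symmetrically on $M_1$. Since $\hat X_e$ is tangent to $M_0\cup M_1$, the graph $G(\hat{\bar X}_e)$ of the semi-flow meets $p_1^{-1}(M_i)$ transversely (the transverse direction is $\partial/\partial t$), and $G(\hat{\bar X}_e)\cap p_1^{-1}(M_i)=G(\bar X_e^{(i)})$ where $X_e^{(0)}=X_e$ and $X_e^{(1)}=X'_e$.

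Next I would run the inductive construction of $\hat I(v,\hat{\mathcal D}_T)$ in parallel with the one on $M$. At each step of Rules 1--6, one must check that the transverse intersection in $\hat M^{\times n(v)}$ meets $p_1^{-1}(M_i)$ transversely and that the trace coincides with the corresponding object built from $\mathcal D_T$ (for $i=0$) or $\mathcal D'_T$ (for $i=1$). This is straightforward: fiber products commute with taking preimages by a submersion, and the transversality to $p_1^{-1}(M_i)$ is the assumption coming from admissibility of $\hat{\mathcal D}_T$ combined with the fact that the first-factor projection is transverse to the boundary (since $\hat I(v,\hat{\mathcal D}_T)$ itself is transverse there by genericity of the paths $(X_e^t)_t$). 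The upshot is that $\hat I(v,\hat{\mathcal D}_T)\cap p_1^{-1}(M_i)$ is, by induction, exactly $I(v,\mathcal D_T)$ for $i=0$ and $I(v,\mathcal D'_T)$ for $i=1$. Thus $\hat I(v,\hat{\mathcal D}_T)$ is a manifold-with-boundary whose two boundary components realize the announced cobordism.

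For the compactification statement, I would invoke Proposition \ref{strat-multi} applied to $\hat{\mathcal D}_T$, which yields a stratified compactification of $\hat I(v,\hat{\mathcal D}_T)$ with conical singularities. The strata of positive codimension are generated by zeros $z$ of the decorating vector fields; since $\hat f$ has no critical points in the interior of $\hat M$, every such $z$ lies in $M_0\cup M_1$, hence on the boundary. The stratum generated by $z\in M_i$ is of the form $W^s(z,\hat X_e)\times c\bigl(W^u(z,\hat X_e)\times_{\hat M}\hat I(T_0)\bigr)$, and its intersection with $p_1^{-1}(M_i)$ reproduces the corresponding stratum of the compactification of $I(v,\mathcal D_T)$ or $I(v,\mathcal D'_T)$ by the same argument as above. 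The main obstacle, in my view, is precisely this last point: one must verify that taking closures commutes with restriction to $M_i$, i.e.\ that no stratum in $\hat I(v,\hat{\mathcal D}_T)$ arises from a broken orbit whose breaking point fails to be traced by the boundary decomposition. This is ensured by the fact that the critical points generating strata all lie in $M_0\cup M_1$ and the unstable/stable manifolds used in the stratum generators are tangent to these boundary components, so that the stratified structure of $\hat I(v,\hat{\mathcal D}_T)$ splits coherently across $p_1^{-1}(M_0)$ and $p_1^{-1}(M_1)$ — giving the desired stratified cobordism.
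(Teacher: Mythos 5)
Your proposal is correct and follows essentially the same route as the paper's own (very compressed) argument, which simply asserts that a generic choice of the paths $(X_e^t)_t$ makes $\hat{\mathcal D}_T$ admissible over $\hat M$ and that the resulting multi-intersections are transverse to $p_1^{-1}(M_i)$, leaving the reader to infer the boundary identifications. You unpack exactly those assertions — tangency of $\hat X_e$ to $M_0\cup M_1$ with restrictions $X_e$ and $X'_e$, the inductive boundary traces through the fiber-product construction, and the appeal to Proposition \ref{strat-multi} for the stratified compactification.
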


For the definition of the $\A$-operations, it is crucial that the family of chosen decorations is  
coherent in the sense of Section \ref{coherence}.

\begin{prop} The set of decorations $\{\hat{\mathcal D}(T)\}_T$, where $T$ ranges over the Fukaya trees 
can be chosen in order to be coherent over $\hat M$.
\end{prop}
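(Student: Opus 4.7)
My plan is to imitate the $g$-standard construction of Section~\ref{coherence} one dimension higher, with a smooth path of diffeomorphisms playing the role of a single $g \in \mathcal G_\infty$. Fix $g_0, g_1 \in \mathcal G_\infty$ producing the two coherent, transition-compatible families $\{\mathcal D_T\}$ and $\{\mathcal D'_T\}$ via Proposition~\ref{rmq-d-infty}. Since $G_\Si$ is by definition arcwise connected (Notation~\ref{notation7}), I select a smooth path $(g_t)_{t\in[0,1]}$ in $G_\Si$ from $g_0$ to $g_1$, stationary near the endpoints. For an edge $e$ carrying the label $e^j_h$ (Subsection~\ref{standard}) in a Fukaya tree $T$, I set $X^t_e := (g_t)^{\circ(j-1)}_* X^D$ and define $\hat X_e(x,t) := X^t_e(x) + \nabla h(t)$, exactly as in Subsection~\ref{cobordism}. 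The candidate concordance decoration is $\hat{\mathcal D}_T := \{\hat X_e\}_{e\subset T}$; by construction its traces over $M_0$ and $M_1$ recover $\mathcal D_T$ and $\mathcal D'_T$ respectively.

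\textbf{Coherence and transition compatibility, granted admissibility.} Coherence is inherited directly from the $g$-standard scheme. If $T_0$ is a Fukaya subtree of $T_1$, then, as in Remark~\ref{label-tree}, the decoration induced on $T_0$ by $\hat{\mathcal D}_{T_1}$ differs from $\hat{\mathcal D}_{T_0}$ by a common ambient factor $(g_t)^{\circ(j-1)}_*$, where $j$ is the label of the leftmost generation-one edge of $T_0$ inside $T_1$. Applying to this common factor the rescaling isotopy $s \mapsto (g_{ts})^{\circ(j-1)}_*$ --- performed simultaneously at every level $t$ of the concordance --- joins $j^*\hat{\mathcal D}_{T_1}$ to $\hat{\mathcal D}_{T_0}$ through admissible decorations, by a verbatim repetition of the argument of Proposition~\ref{coherence-d}. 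Transition compatibility is even more direct: the argument of Proposition~\ref{transition_prop} shows that collapsing an interior edge of $T'$ (or $T''$) onto $T$ leaves every surviving label unchanged, so $\hat{\mathcal D}_{T'}$ and $\hat{\mathcal D}_{T''}$ literally induce the same concordance decoration of the codimension-one tree $T$.

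\textbf{Main obstacle: parametric admissibility.} The crux is to ensure that, for a generic choice of the path $(g_t)$, the decoration $\hat{\mathcal D}_T$ is admissible in the sense of Definition~\ref{admissible} for every tree $T$, that is, every multi-intersection built from $\hat{\mathcal D}_T$ in some $\hat M^{\times k}$ is transversely defined. I would establish this by running the inductive scheme of Lemma~\ref{G_h-transverse} in the concordance setting: the role of an open dense subset $\mathcal G_h(d) \subset G_\Si$ is now played by an open dense subset of the space of paths $(g_t)$ in $G_\Si$ endowed with the $C^\infty$ topology. The flow component $\nabla h(t)$ contributes a one-dimensional factor transverse to $M_0$ and $M_1$, so each required transversality becomes a condition in $\hat M^{\times k}$ of the same codimension as its $M$-counterpart; Thom's transversality theorem with constraints then applies through the same iterated fiber product construction as before. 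The compactness arguments of Section~\ref{compact} carry over directly because $\hat M$ is compact and the stratified compactifications of semi-flow graphs and generalized stable manifolds extend to $\hat M$ without change. Finally, a Baire category argument as in Subsection~\ref{d-to-d+1} provides a residual (hence dense) set of paths for which admissibility holds simultaneously for every $d$, at which point the isotopy and collapse arguments above deliver coherence and transition compatibility.
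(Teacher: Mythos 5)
Your construction --- taking a single smooth path $(g_t)_{t\in[0,1]}$ in $G_\Si$ from $g_0$ to $g_1$ as the groupoid-valued analogue of one diffeomorphism, decorating the edge $e^j_h$ with $\hat X_e(x,t)=(g_t)^{\circ(j-1)}_*X^D(x)+\nabla h(t)$, and establishing admissibility by running a parametric version of Lemma~\ref{G-transverse} over $\hat M$, with the space of such paths (with fixed ends, stationary near the ends) as parameter space --- is precisely what the paper intends when it says to replace $G$ by the groupoid of smooth paths in $G$. Your observations on transition compatibility (collapses preserve labels, so $\hat{\mathcal D}_{T'}$ and $\hat{\mathcal D}_{T''}$ induce \emph{equal} decorations of $T$) and on the Baire argument over all $d$ also match the paper.

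The one point to correct is the coherence isotopy. The rescaling $s\mapsto (g_{ts})^{\circ(j-1)}_*$, taken from $s=1$ to $s=0$, carries the common factor $(g_t)^{\circ(j-1)}_*$ to $(g_0)^{\circ(j-1)}_*$ --- which is \emph{not} $Id_M$, because $g_0\in\mathcal G_\infty$ is a genuinely generic element of $G_\Si$ and cannot be the identity (for $g=Id_M$ the stable manifolds $W^s(X^D)$ and $W^s(g_*X^D)$ coincide, contradicting the transversality built into $\mathcal G_1(d)$). So this isotopy only makes the factor $t$-independent; it does not remove it, and hence does not join $j^*\hat{\mathcal D}_{T_1}$ to $\hat{\mathcal D}_{T_0}$. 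To reproduce the argument of Proposition~\ref{coherence-d} fibrewise over $[0,1]$ one needs a two-parameter family $\Ga:[0,1]^2\to G_\Si$ with $\Ga(t,1)=g_t$ and $\Ga(t,0)=Id_M$ for every $t$. Such a $\Ga$ exists because $G_\Si$ is path-connected: for instance, concatenate the reparametrized path $r\mapsto g_{tr}$ (from $g_t$ to $g_0$) with a fixed path from $g_0$ to $Id_M$ in $G_\Si$. Applying $s\mapsto\Ga(t,s)^{\circ(j-1)}_*$ to the common factor then cancels it, and since at each $s$ this is a slice-preserving ambient isotopy of $\hat M$, admissibility is preserved throughout. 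With this replacement, your coherence argument --- and the rest of the proposal --- is sound.
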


\proof   The problem of coherence can be solved by using the same method as in Section \ref{coherence}
and performing it ``over $M\times[0,1]$'', that is, replacing the group $G$
of diffeomorphisms of $M$, isotopic to $Id_M$, by the group $\hat G$ 
of diffeomorphisms of $\hat M$ isotopic to $Id_{\hat M}$;  the elements  $\hat g\in \hat G$ are not required 
to preserve the level sets $\{t=cst\}$. 

There are given two coherent sequences in $G$, namely $\g$ and $\g'$ respectively attached to $M_0$ and $M_1$.
The issue is to extend the pair $(\g,\g')$ to $\hat M$ so that the extension $\hat\g$ is coherent 
in the sense of Definition \ref{coherence-defn}. This can be performed by the \emph{translation flow} method 
introduced in Proposition \ref{translation-prop} and applied in Propositon \ref{prop-coherence}. 
This method admits a relative version since it proceeds 
in  successive extensions of disc bundle sections over increasing dimension skeleta.
\bull

\begin{remarque}\label{boundary-corners}
{\rm When $M$ has a non-empty boundary and we look (for instance) at the critical points of type $+$,
$\hat M$ has  corners modelled on $\R^{n-1}\times Q$ where $Q$ is a quadrant in the plane and there are critical 
points of $\hat f$ lying in the corners. The only issue is to define what is an adapted pseudo-gradient, in order that 
the stable manifolds are well defined. One solution consists of demanding the pseudo-gradient to point inwards
along $\partial M\times[0,1]$, except near the critical points in the corners where it is tangent to
 $\partial M\times[0,1]$. The rest of the previous discussion is similar.
}\end{remarque}

We are going to see that the above geometric cobordisms lead to a quasi-morphism of the $A_\infty$-structure
defined thanks to the set of decorations $\{D(T)\}_T$ to the one defined by $\{D'(T)\}_T$. The required uniqueness
up to homotopy
will follow.

\begin{rien} {\bf Construction of  $A_\infty$-quasi-isomorphism.}

{\rm We now construct a quasi-isomorphism $\{\vp_d\}_{d\geq 1}$  between the $\A$-structures $(m_d)_{d\geq 1}$ and 
$(m'_d)_{d\geq 1}$ on $A= C_*(f,X^+)$ corresponding to the decorations  ${\mathcal D}(T)$ and ${\mathcal D}'(T)$ as 
they were introduced in Section \ref{A-infty}.
In fact the construction of  $\vp_d:A^{\otimes d}\to A$ is very similar to that of the $m_i$'s. 
}
\end{rien}

For $d+1$ critical points $x_1,\dots x_d, y$ of $f$,
we define the multi-intersection submanifold of $\hat{M}^{\times (d-1)}$ 
\begin{equation} \label{Tevaluation}
 	\hat{I}(T,x_1,\dots,x_d; y):= 
	\lim\left(I(v_{root}^1)\mathop{\longrightarrow}\limits^{^{p_{root}}} \hat{M}\mathop{\longleftarrow}\limits^{j}
	W^u\bigl((y,1) ,\hat{X}^+)\bigr)\right)
 \end{equation}
which is defined using the decoration $\{\hat{{\mathcal D}}(T)\}_{T\in {\mathcal T}_0}$. Here, the inputs of 
 $\hat I(T)$ are the $ (x_i,0)$'s and the output is $(y,1)$.

For $d\geq 1$,  we define $\vp_d:A^{\otimes}\to A$ by 
\begin{equation}\left\{
\begin{split}
&\vp_d(x_1,\dots, x_d):= \\
&(-1)^{ \sum_{s=1}^d (d-s)\vert x_s\vert  }\sum _{T \in \mathcal T^0_d}\left[\sum_{ 
\Vert(y,1)\Vert=2-d+\sum \Vert (x_i, 0)\Vert} \#\hat{I}(T,x_1,\dots,x_d; y) y\right].
\end{split}\right.
\end{equation}
where degree $\Vert .\Vert$ is defined with respect to $\hat{f}$ as a Morse fonction on $\hat{M}$.

Note that condition $\Vert(y,1)\Vert=2-d +\sum \Vert(x_i, 0)\Vert $ 
is the necessary and sufficient condition for zero dimensionality of $\hat{I}(T,x_1,\dots,x_d; y)$.  
Moreover, by observing that 
\begin{equation}
\Vert(y,1)\Vert=n+1-Ind(\hat f,(y,1))=n+1-Ind (f,y)=|y|+1
\end{equation}
and
\begin{equation}
\Vert(x_i,0)\Vert=n+1-Ind(\hat f,(x_i,0))=n+1 -(Ind (f,x_i)+1)=	|x_i|
\end{equation}
we conclude that the degree of $\vp_d$ is $1-d$, ({\it i.e.} one lower than  $m_d$ and $m'_d$). Let us also recall that
when $\partial M\neq\emptyset$ and $x\in crit^+f_\partial$,  we have $\dim W^s(x, X^+)= Ind(f_\partial,x)+1$.

\begin{prop} The collection $(\vp_1, \dots, \vp_d,\dots)$ defines a quasi-isomorphism of $\A$-structures.
\end{prop}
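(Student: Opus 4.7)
The plan is to show that $(f_1,f_2,\dots)$ satisfies the $A_\infty$-morphism relations
\begin{equation*}
\sum_{r+s+t=d}(-1)^{\diamond}\, f_{r+1+t}\bigl(1^{\otimes r}\otimes m_s\otimes 1^{\otimes t}\bigr) \;=\; \sum_{k,\,i_1+\cdots+i_k=d}(-1)^{\diamondsuit}\, m'_k\bigl(f_{i_1}\otimes\cdots\otimes f_{i_k}\bigr),
\end{equation*}
and that $f_1$ induces an isomorphism on homology. The strategy is to mimic the proof of Theorem~\ref{thm-Ainfty} in the cobordism data $(\hat M,\hat f,\hat{\mathcal D}_T)$ of Subsection~\ref{cobordism}. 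For each generic tree $T\in\mathcal T^0_d$ and each choice of entries $(x_1,\dots,x_d)$ in $M_0$ and test critical point $z$ in $M_1$ making the multi-intersection $\hat I_T(x_1,\dots,x_d;z)$ one-dimensional, the relations will follow from the vanishing of the signed count of the codimension-one boundary components of its natural stratified compactification.

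The stratification of $cl(\hat I_T)$ parallels Proposition~\ref{strat-multi}. Type-B strata (diagonal collapses of an interior edge) cancel pairwise by a direct transcription of Proposition~\ref{opposite-or} to $\hat M$; Type-C strata (from $\partial\hat M\smallsetminus(M_0\cup M_1)$) vanish because $W^u((z,1),\hat X^D)$ avoids the walls away from $z$. The novelty lies in the Type-A strata, corresponding to breaks of an orbit of $\hat X_e$ at a critical point $y$ of $\hat f$. Since $\hat f$ has critical points only in $M_0\cup M_1$, Type-A splits into two subtypes. If $y\in M_0$, the Dirichlet character of $(y,0)$ forces $W^u((y,0),\hat X^D)$ to lie in $M_0$, where $\hat X^D$ restricts to $X^D$; hence the portion of the broken configuration below $y$ is an ordinary Morse multi-intersection $I_{T_\alpha}(x_{j+1},\dots,x_{j+s};y)$ computed with $\mathcal D_{T_\alpha}$, while the portion above remains a cobordism intersection $\hat I_{T_\beta}(x_1,\dots,y,\dots,x_d;z)$. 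These strata produce exactly the left-hand terms $f_{r+1+t}(1^{\otimes r}\otimes m_s\otimes 1^{\otimes t})$. Dually, if $y\in M_1$, the Neumann character of $(y,1)$ places the portion \emph{above} $y$ entirely in $M_1$, where $\hat X^D$ again restricts to $X^D$ but the decorations of the edges are the path's endpoints belonging to $\mathcal D'$; this portion contributes the right-hand terms.

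The point I expect to be the main obstacle is that, for $k>1$, a term $m'_k(f_{i_1}\otimes\cdots\otimes f_{i_k})$ requires an entire subtree $T^{up}\subset T$ to migrate to $M_1$ in a single codimension-one degeneration, with the $k$ subtrees dangling from its leaves each supplying one factor $f_{i_l}$. Establishing that such simultaneous migrations are genuine codimension-one phenomena (and not higher-codimension accidents) hinges on the fact that each path $(X_e^t)_t$ is stationary at $X'_e$ near $t=1$, so that an orbit segment spending longer and longer time near a critical point $(y,1)$ drags the entire upstream configuration onto an $M_1$-intersection decorated by $\mathcal D'_{T^{up}}$; this is where the coherence and transition-compatibility of $\{\hat{\mathcal D}_T\}_T$ proved above are decisive, guaranteeing that the $M_1$-part is genuinely computed by the $m'$-operations. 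Once the strata are identified, the signs $(-1)^\diamond$ and $(-1)^\diamondsuit$ are extracted by a direct transcription of Proposition~\ref{or-I(T)} and Proposition~\ref{opposite-or} to $\hat M$. Finally, to verify that $f_1$ is a quasi-isomorphism, the reversed cobordism $t\mapsto 1-t$ yields a morphism $(g_d)$ in the opposite direction, and the composition $f_1\circ g_1$ is chain-homotopic to the identity by a standard parameter argument on $[0,1]\times[0,1]$; hence $H(f_1)$ is an isomorphism.
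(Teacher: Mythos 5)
Your proposal follows the paper's strategy closely: the $A_\infty$-morphism relations are derived from the vanishing of the signed boundary count of the compactified one-dimensional multi-intersections $\hat I_T(x_1,\dots,x_d;z)$ in $\hat M$, with type-B strata cancelling in pairs, type-C strata absent, and type-A strata (orbit breaks) splitting according to whether the break occurs at a critical point in $M_0$ (yielding the $f\circ m$-terms) or in $M_1$ (yielding the $m'\circ(f\otimes\cdots\otimes f)$-terms). Two points deserve comment.

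First, concerning the $k>1$ terms $m'_k(f_{i_1}\otimes\cdots\otimes f_{i_k})$: you describe the simultaneous migration of a subtree to $M_1$ as a \emph{genuine codimension-one} degeneration and argue this via the stationarity of the paths $(X_e^t)$ near $t=1$. The paper's account is subtler and, I think, more accurate: the Claim shows that a break at $y\in M_1$ forces a break at a zero of $X'_{e_j}$ for \emph{every} edge $e_j$ separating the root from the leaves, so the resulting stratum $C$ is of codimension $k$ in $cl(\hat I_T(x_1,\dots,x_d))$, not $1$. For $k>1$ such a $C$ is not a smooth boundary component; what rescues the counting is that the singularities of the compactification are conical, so after blowing up the 1-dimensional set $\hat I_T(x_1,\dots,x_d;z)$, the points lying over $C$ become boundary points whose signed count records the $m'_k$-contribution. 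Your heuristic about orbits lingering near $(y,1)$ points in the right direction, but the claim that this upgrades a codimension-$k$ stratum to codimension $1$ is not what happens — the forced simultaneity is precisely why these strata are of higher codimension yet still contribute, via the conical-singularity structure established in Propositions~\ref{strat_graph} and~\ref{strat-multi}.

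Second, for the quasi-isomorphism part you propose to build a reversed cobordism $(g_d)$ and invoke a composition/homotopy argument. This is a valid Floer-theoretic strategy, but it is heavier than what is needed and it silently imports another nontrivial statement (that the composition of two continuation maps is homotopic to the identity), which would itself require an argument over $M\times[0,1]^2$. The paper's route is shorter: one checks directly that $f_1$ is the \emph{identity} on $A$ — this uses that $\hat M=M\times[0,1]$ is a product cobordism, that $\hat f$ restricts to $f$ up to a constant on the two ends, and that the complex $C_*^D(f,X^D)$ itself (hence $m_1=m'_1$) is unchanged by the choice of decoration family. Since $f_1=\mathrm{id}$ and the morphism relations hold, $(f_d)$ is automatically a quasi-isomorphism.

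Aside from these two points, the identification of the boundary strata, the cancellation of type-B contributions via Proposition~\ref{opposite-or}, the vanishing of the $\partial M\times[0,1]$ contributions, and the sign bookkeeping via Proposition~\ref{or-I(T)} are all in line with the paper's argument.
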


\begin{proof} It is easily checked that $\vp_1: A\to A$ is the identity. Then, as soon as the morphism relations are fulfilled,
we get a quasi-isomorphism. Let us recall these relations from Appendix \ref{appendix-Astr}:
\begin{equation}\label{condition—morphism1}
 \sum_{j+k+l=d}  (-1)^{j+kl} \vp_{j+l+1}\ (1^{\otimes j} \otimes m_k\otimes  1^{\otimes l})=\sum_{k=1}^d \sum_{i_1+\cdots +i_k=d} (-1)^{\epsilon_{i_1, \cdots i_k}}m'_k( \vp_{i_1} \otimes \cdots \otimes   \vp_{i_k})
 \end{equation}
 \nd where $\epsilon_{i_1, \cdots i_k} = \sum_{j=1}^k (k-j)(r_j-1) $.
 These relations are 
 implied by  geometric information  given by the decoration family $\{\hat{\mathcal D}(T)\}_T $, namely,
 for every $d>0$,
 \begin{equation}\label{geom}
 \left\{
 \begin{split}
 &\sum_{j,T_\al,T_\beta} \sum_{(y,0)}(-1)^{j-\sum_{i=1}^j\vert x_j\vert} \#I^0(T_\al, x_{j+1},\dots,x_{j+k}; y)
 \#\hat I(T_\beta, x_1,\dots,x_j,y, x_{j+k+1},\dots,x_d;z)\\
&= \sum_{k=1}^d \sum_{
 \tiny\begin{array}{c}
 T_\ga\in \mathcal T^0_k\\
  0<i_1<\cdots <i_k<d       
 \end{array}
} 
\left( \#I^1(T_\ga,y_1, \dots, y_k; z)
 \prod_{ \tiny\begin{array}{c}
 j=1\\
T_j\in \mathcal{T}^0_{i_j-i_{j-1}}
 \end{array}}^{k+1}\#\hat I(T_{j},x_{i_{j-1}+1},\dots, x_{i_j}; y_j) \right).
 \end{split}
 \right.
 \end{equation}
 where   $i_0=0$, $i_{k+1}=d$ .  
 Here, $I^0(T,-)$ (resp. $I^1(T,-)$) stands for the multi-intersection computed with the decorations $\{\mathcal D(T)\}_T$ 
 on $M_0$ (resp. $\{\mathcal D'(T)\}_T$ on $M_1$);  the connected-sum  tree  $T= T_\al\#_{j+1} T_\beta$ is a generic tree with $d$ leaves.  Note that the $i_k$'s in the identity (\ref{condition—morphism1})  correspond  to the quantities $i_j-i_{j-1}$ in (\ref{geom}).

 The proof of (\ref{geom}) is similar to the proof of (\ref{eq-A-proof1}) with some new phenomena. By degree arguments, one knows that the multi-intersection $\hat I(T,x_1,\dots, x_d; z)$ is one-dimensional.
 So, we have to analyze its compactification. We already know that the collapse of an edge of $T$ contributes to zero
 because such a boundary component appears twice in the considered sum
  with opposite orientations. The boundary component 
 $\partial M\times[0,1]$ contributes also to zero as the vector field $\hat X$ points inwards except in 
 a very small neighbourhood of $crit^+f\times\{0,1\}$.
 
 The first new phenomenon is the following. The breaking of an orbit of $\hat X_e$ involves
 in the same time the boundary of $\hat M$:
 if it breaks in $y\in M_0$, the unstable manifold $W^u(y, \hat X_e)$ coincide with $W^u(y, X_e)$. This explains the 
 factor $\#I^0(T_\al, x_{j+1},\dots,x_{j+k}; y)$ in the left handside of (\ref{geom}). 
 
 The second new phenomenon is that, if the breaking happens
  at $y\in M_1$ and $d>1$, then the breaking cannot happen alone. 
 Indeed, $W^s(y, \hat X_e)$ is contained in $M_1$; therefore, it has an empty intersection with any other stable manifold
 (or generalized stable manifold) which, by construction, lies in $int(\hat M)\cup M_0$. 
 Assume the root of $e$ is not the root of $T$ and  let $e'$ be the other edge of $T$
 having the same root as $e$. Then, we have proved that the \emph{generalized} stable manifold $W^s(e', \hat X_{e'})$
 must also be contained  $M_1$ or (over $M_1$ through $p_1$ in the fiber product construction.) By iterating this argument, one proves the following claim.\\
 
 \nd{\sc Claim.} {\it If $d>1$, any non-empty 
connected component $C$ of the frontier\footnote{ The frontier of a multi-intersection consists of its
 compactification with the multi-intersection in question removed.} of  $\hat I(T,x_1,\dots, x_d)$ which involves
 the breaking of an orbit at a zero in $M_1$
and no breaking in $M_0$
 gives rise to the following decomposition of $T$: there exist $k>0$, some edges $e_1, \dots, e_k$ in $T$ separating the 
 root of $T$ from all leaves and points $y_1,\dots, y_k$ in $M_1$  which are respectively zeroes of $X'_{e_j}$, 
 $j= 1,\dots, k$, 
 such that $C$ is contained in the multi-intersection $I(T^1,y_1, \dots, y_k)$.}\\
 
 In particular, except when some $y_j$ is of maximal Morse index (which has a neutral effect), $C$ is of  codimension $k$
 in the compactification of $\hat I(T,x_1,\dots, x_d)$. If $k>1$, such a $C$ does not adhere to any smooth boundary 
 component. This phenomenon is compatible with the fact that the singularities of the compactification are \emph{conic}.
 This claim gives the geometric signification of the right handside of (\ref{geom}) and finishes the proof {\bf up to sign}.
 It explains that a one-dimensional intersection of  $\hat I_T(x_1,\dots, x_d)$ 
 with $W^u(z, X^+)$ cannot generically avoid to have singular points in
 such a stratum.  All other configurations of orbit breaking are generically avoidable, and hence, do not appear 
 in the counting of  (\ref{geom}). \bull
\end{proof}

\appendix \label{appendix}
\section{Complements on  the submanifolds with $C^1$ conic singularities}\label{appendix-comp}

\begin{defn} \label{skel} Let $K$ be a compact submanifold of $M$ with $C^1$ conic singularities. The $k$-\emph{skeleton}
 $K^{[k]}$ of $K$ is the union of the strata of $K$ which are of codimension at least $n-k$ in $M$.
 \end{defn}

  \begin{lemme} \label{lemma-conic} Let $A$ and $B$ two compact submanifolds of $M$ with $C^1$ conic 
  singularities.
 Then, for a generic diffeomorphism $g$ of $M$
 the image $g(A)$ is transverse to $B$, meaning that 
 each stratum of $g(A)$ is transverse to every stratum of $B$.  
 Moreover, this transversality is fulfilled in an open set of the $C^1$ topology of $Di\!f\!f(M)$.
  
 \end{lemme}
 
 \proof Thanks to the group action, it is enough to prove this statement near the Identity of $M$.
 Assume the skeleton $A^{[k]}$ is already transverse to $B$. So, near any point $x\in A^{[k]}$, each stratum of 
 $A$ is transverse to $B$. This fact 
 directly follows  from the conic transverse structure.
 
Let $S$ be an open $(k+1)$-stratum of $A$; it is transverse to $B$ outside some compact set $C\subset S$. By 
the  very first transversality theorem of Thom \cite{thom-cob}, an arbitrarily  small 
ambient isotopy supported in a neighborhood of $C$ makes $S$ successively transverse to 
the 0-skeleton, the 1-skeleton, and so on, until being transverse to $B$. 
Moreover, these smooth approximations of $Id_M$ fulfilling the above requirement form a $C^1$ open set.
This double induction gives the desired genericity, including the $C^1$ openness of transversality.\bull

\begin{lemme}\label{union} Let $A$ and $B$ be two submanifolds with conic singularities which are
mutually transverse. Then their 
union $A\cup B$ is a submanifold with $C^1$ conic singularities.  Its strata are of one of the following forms
 where $S$ is a stratum of $A$ and $\Si$ is a stratum of $B$:
 $S\cap \Si$ or 
$S\smallsetminus \Si$ or $\Si\smallsetminus S$.
\end{lemme} 
\proof The only matter is about the structure at points in $A\cap B$. 
Set $\La=S\cap\Si$.
For $x\in \La$, let $\Theta_{B, \Si, x}$ 
be the transverse conique structure to $\Si$ in $M$ induced by $B$ on the normal fiber  $\nu_x(\Si,M)$. 
By transversality, we have the equality 
$\nu_x(\Si,M)=\nu_x(\La, S)$. So, $\nu_x(\La, S)$ is equipped with $\Theta_{B,\Si, x}$.
Similarly we have the transverse conic structure 
$\Theta_{A,S,x}$ induced on the normal fiber $\nu_x(\La, \Si)$. 

These two transverse structures can be trivialized over a small open neighborhood of $x$ in $\La$.
Since the two bundles $\nu(\La, S)$ and $\nu(\La, \Si$) are complementary in $\nu(\La,M)$,
 the two trivializations are independent.
Therefore, they can be realized by a same $C^1$ diffeomorphism of $M$. Hence, the conic structure 
induced by $A\cup B$ on the normal fiber $\nu_x(\La, M)$ is the \emph{join} 
(in the sense of the piecewise linear topology) $\Theta_{A,S,x}*\Theta_{B,\Si, x}$.
\bull

The next lemma and its corollary can be proved in the same way.

\begin{lemme}\label{product} Let, for $i=1,2$, $A_i\subset M^{\times k_i}$ be a compact submanifold 
with $C^1$ conic singularities. Then $A_1\times A_2\subset M^{\times (k_1+k_2)}$ is so.
\end{lemme}

\begin{cor}\label{fiber-product} In the same product setting as in Lemma \ref{product}, let $p_i: M^{\times k_i}\to M$
be a projection to one factor of the product. If the restrictions $p_1\vert A_1$ and $p_2\vert A_2$ are transverse,
then the fiber product over $M$ of these two maps is a compact submanifold $M^{\times (k_1+k_2-1)}$
with $C^1$ conic singularities.
\end{cor}

  \section{Some applications of Sard's theorem to 
 immediate transversality} \label{sard}
  Let $S_1$ and $S_2$ be two smooth\footnote{ Smooth stands for $C^\infty$.} submanifolds of positive codimension in $\R^n$, possibly equal.
 One defines the space of \emph{secants} from $S_1$ to $S_2$ by
 \begin{equation}
Sec_{S_1,S_2} := \{(u,x)\in \vec\R^n\times\R^n\mid x\in S_1 
\text{ and }x+u\in S_2\}
\end{equation}
Let $\pi : Sec_{S_1,S_2}\to \vec \R^n$ denote the projection $(u,x)\mapsto u$. 
 
If $f_1(x)= 0\ (\text{resp. } f_2(x)= 0)$ are two local systems of regular equations 
defining $S_1$ (resp. $S_2$), the---potential---tangent 
space to $Sec_{S_1,S_2}$ at $(u,x)$ is defined by the linearized system
\begin{equation}\label{lin-sec}
\left\{
\begin{array}{l}
Df_1(x)\cdot \de x=0\\ 
Df_2(x+u)\cdot(\de x+\de u)=0.
\end{array}
\right.
\end{equation} 
This system is of maximal rank and hence $Sec_{S_1,S_2}$ is a smooth submanifold.

\begin{prop}\label{b1}
 For almost every $u\in \vec\R^n\smallsetminus \{0\}$\,\footnote{ In this section, ``almost every'' is meant in the Baire sense, that is, ``in some residual subset.''} and every $x\in \pi^{-1}(u)$, 
 the two tangent vector spaces $T_xS_1$ and $T_{x+u}S_2$ are not \emph{coplanar} in the sense that
 there is no 
\emph{hyperplane} containing both of them.
\end{prop}

Note that when ${\rm codim}\,S_1+{\rm codim}\,S_2>n$ and $\pi^{-1}(u)\neq\emptyset$ coplanarity is automatic.\\

\proof Thanks to the smoothness assumption Sard's Theorem is available. It
tells us that almost every $u$ is a regular value of $\pi$ (possibly with an empty inverse image).
But an easy argument shows that $u$ is a critical value of $\pi$ if and only if there exists $x\in \pi^{-1}(u)$
such that the tangent spaces $T_xS_1$ and $T_{x+u}S_2$ are coplanar.

Indeed, in case of coplanarity, there is a non-zero 
linear form $L$ vanishing on $\ker Df_1(x)$ and \break$\ker Df_2(x+u)$.
For $(\de u, \de x)$ solution of (\ref{lin-sec}), we have $L(\de x)=0\text{ and } L(\de x+\de u)=0$. Then $\de u$
is forced to belong to $\ker L$ and hence $D\pi(u,x) $ is not surjective. 

Conversely, if for every $x\in \pi^{-1}(u)$ the tangent spaces
$T_xS_1$ and $T_{x+u}S_2$  are not coplanar 
the matrix of $\left(\begin{array}{c}
Df_1(x) \\
Df_2(x+u)
\end{array}\right)$ is of maximal rank. 
Then, for every $\de u$ one can solve the linear system
 \begin{equation}
 \left\{
 \begin{array}{cl}\label{sec-lin2}
 Df_1(x)\cdot\de x&= 0\\
 Df_2(x+u)\cdot\de x&=-Df_2(x+u)\cdot \de u,
 \end{array} 
 \right. 
\end{equation}
and hence,  $u$ is a regular value of $\pi$.\bull

Here is the corollary we are interested in;
 non-coplanarity is a criterion for a translation flow to be of {\it  immediate tranversality} (Definition \ref{immediate}).

\begin{cor}{\bf (Non-coplanarity criterion)}\label{trans-cones}
 Let $S\subset \mathbb S^{n-1}$ be a smooth compact submanifold 
with $C^1$ conic singularities\,\footnote{ The strata are $C^\infty$ but the local trivialization of the transverse conic structure is only $C^1$ at the vertex of the cone in each fiber.} in the unit $(n-1)$-sphere.
Let $C$ be the cone based on $S$ with the origin $O$ as a vertex. 
Then, there exists some  residual 
set $R\subset \vec \R^n$, actually an open and dense subset, 
such that  $u$ belonging to  $R$ is equivalent to each of the  
following properties:
\begin{enumerate}
\item The translated cone $C+u$ is transverse to $C$. 
\item The translation flow generated by $u$ is a flow of immediate transversality to $C$.
\end{enumerate}.
\end{cor}
\proof We first show the two items are equivalent. 
Let  $(S_1,S_2)$ be a pair  of strata from the cone $C$  (that is, punctured cones based on strata in $S$.)
  If $S_1+u$ is not transverse to $S_2$ at $a$
then  $S_1+tu$ is not transverse to $S_2$ at $ta$; indeed, the tangent spaces to $S_1$ and $S_2$ are 
constant along each  generating line. Therefore,  non-transversality is preserved along a positive translation semi-flow. 

In Proposition \ref{b1}, we have checked that the critical set of $\pi: Sec_{S_1,S_2}\to \vec \R^n$ is the set of  pairs
$(u,x)$ displaying coplanarity of $T_xS_1$ and $T_{x+u} S_2$.
So, $crit (\pi)$ is a cone; it is closed in $(\vec\R^n\setminus \{0\})\times S_1$ as usual for a critical set.  
One also controls its closure as 
we explain below.

It is easy to describe $\{0\}\times S_1$ as a part of the  closure of $crit (\pi)$ in $\vec \R^n\times S_1$.
Indeed, for $(u_j,x_j)$ tending to $(0,x_0)$ with $x_0\in S_1$,
the ray $\R_+x_j$ goes
to the 
ray $\R_+x_0$. 
 A pair $(x,y)$ of points staying on the same ray makes $(y-x,x)\in crit(\pi)$. Therefore, the renormalized sequence
$(u_j/\Vert u_j\Vert,x_j/\Vert u_j\Vert)$ is asymptotic to this part of $crit(\pi)$
which is isomorphic to $\R_+\times S_1$.

Let us now consider the case of $(x_j)$ going to $x_0$ in another stratum $S_0$ of $C$; this stratum lies in the closure 
of $S_1$.
 Up to a subsequence, the sequence of 
tangent spaces $T_{x_j} S_1$ has a limit which contains $T_{x_0}S_0$; this is Whitney's condition A which holds since the singularities are $C^1$ conic. 
Let $\bar \pi: \vec\R^n\times\bar S_1\to \vec\R^n$ denote the extension of $\pi$ to the closure of its domain. 
If $(u_j, x_j)\in crit(\pi)$ for every $j$---that is some coplanarity---this condition A implies
$\lim_j (u_j, x_j)$ in $\vec\R^n\times S_0$ is a critical point of $\bar \pi$.

The cone  $crit(\bar\pi)$  is a subcone  of the  cone based on $S\times S$. Since $S$ is compact $crit(\bar\pi)$
has a compact base.
The same holds for the set of critical values in $\vec \R^n$. 
Then the set $R_{S_1,S_2}$ of regular values of 
$\bar\pi$ is open; moreover it is dense, as stated in Proposition \ref{b1}.
The desired $R$ is the finite intersection of $R_{S_1,S_2}$ over all pairs of strata. 
\bull

\begin{rien} {\bf Product family of cones.\footnote{ This generalizes to locally trivial bundles.}} \label{product-family}
{\rm This consists of the product $V\times(\B^{n-k}, Q)$ 
where $Q$ is a cone in $\B^{n-k}$ as in the previous corollary and $V$ is a \emph{compact} $k$-dimensional 
manifold. A \emph{translation flow} 
$(u^t)$ is generated by a smooth section $u: V\to V\times \vec \R^{n-k}$,
 that is a 
 translation vector $u(p)$ in each fiber $\{p\}\times \B^{n-k}$, depending smoothly on $p$.
 The flow acts on the fiber over $p$ by the formula 
\begin{equation}
u^t(p, x)=\bigl(p, x+t u(p)\bigr).
\end{equation}
The germ of this flow is said to be  of \emph{immediate transversality}
 to $V\times Q$ if $u^t(V\times Q)$ is transverse to 
$V\times Q$ for every small  positive $t$. 

Since $Q$ is a cone and only translations are involved, the flow $(u^t)$ is of immediate transversality 
if and only if $u^\theta(V\times Q)$ is transverse to $V\times Q$ for some $\theta >0$. 
Explicitely, this reads by saying that for every $p\in V$ one of the two following properties holds:
\begin{equation} \label{pitch}
\left\{
\begin{split}
&\text{ -- The translation }u^\theta(p)\text{ maps }\{p\}\times Q\text{ transversely to itself in }
\{p\}\times \B^{n-k}.\\
&\text{ -- For every hyperplane }H\text{ in }\B^{n-k}\text{ bitangent to }Q\text{ at some points }x\text{ and }x+u^\theta(p),\\ 
&\quad\text{ the operator }\p_V u^\theta\vert_{p}\text{ maps the tangent space  }T_{p}V\times\{0\}\text{ transversely to the}\\
& \quad\text{ codimension-one space  }T_pV\times H.  
\end{split}
\right.
\end{equation}
Property (\ref{pitch}) is open in the $C^1$ topology of sections (see Corollary \ref{trans-cones}).
Using the same idea as in Proposition \ref{transverse-family}, namely applying Sard's theorem to a 
finite dimensional family of sections of $V\times  \vec\R^{n-k}$ which is submersive on each fiber, one gets that immediate transversality is generic. More precisely, we have the following.
}
\end{rien}

\begin{prop} \label{cone-family} Regarding the  smooth sections $V\to V\times \vec\R^{n-k}$
as generators of (germs of) fiberwise translation flows on $V\times \B^{n-k}$, the set of those which generate immediate 
transversality to $V\times Q$ is open and dense in the $C^1$ topology of smooth sections. \emph{For short,
these sections are
 said  to be generic}.
Moreover, the following relative version holds: every germ of generic section along boundary $\p V$ extends to a generic section over $V$.
\end{prop}

\proof For the relative version, the given germ  extends arbitrarily to $\tilde\si:V\to V\times \vec\R^{n-k}$. 
Then, $\tilde\si$ has a generic approximation $\si$  which can be connected to $\tilde\si\vert\p V$
among the generic germs thanks to openness. \bull

The only remaining issue is to make coexist Proposition \ref{cone-family} and Corollary \ref{trans-cones}
when a stratum of a manifold with conic singularities enters the $n$-ball about a 0-stratum.
Here is the main concept related to this question.

\begin{rien} {\bf The reduced translation flow.} {\rm In the setting of Corollary \ref{trans-cones}, we consider a
$k$-dimensional stratum $S_k$ of the cone $C$, $k>0$, and a compact subdomain 
${\underline S}_{\,k}$ 
(Subsection  \ref{tubes} (2).) By definition of conic singularities, 
$C$ induces a conic bundle over ${\underline S}_{\,k}$. 
Namely,
 there exists a tube $N_k$, which is a \emph{trivial} $(n-k)$-disc bundle over ${\underline S}_{\,k}$ whose fibers $N_{k,x}, \ x\in {\underline S}_{\,k}$, are \emph{planar} in the unit ball $\B^n$.  The fibers $C\cap N_{k,x}$ are conic and  form a trivial cone subbundle of $N_k$.

Let $u$ be the generator of a (germ of) translation flow in $\B^n$. For every $x\in {\underline S}_{\,k}$ and every 
$y\in N_{k,x}$ one uses the splitting of the tangent space
\begin{equation}\label{split-reduce}
T_y\B^n= T_x{\underline S}_{\,k}\oplus T_yN_{k,x} .
\end{equation}
Here, the tangent space $T_x{\underline S}_{\,k}$ is carried to $y$ by parallelism with respect to the ambient affine structure of $\B^n$
and $N_{k,x}$ is thought of as spanning an $(n-k)$-dimensional affine subspace  in $\R^n$.
The splitting 
decomposes the vector $u$ into horizontal and vertical components at $x$, that is:
\begin{equation}\label{b.5}
u= u_\frak h ^k(x) \oplus u_\frak v^k(y) 
\end{equation} 
with $ u_\frak h ^k(x)\in T_x{\underline S}_{\,k}$ and $u_\frak v^k(y)\in T_yN_{k,x}  $.
Note that this splitting is constant along the fiber $N_{k,x}$, that is, independent of $y$.
The vertical component $x\mapsto u_\frak v^k(x)$ is a section of $N_k$ which 
is termed the \emph{reduction of $u$ to $N_k$}.   
}
 \end{rien}

\begin{rien} \label{re-process}
{\bf Reducing process.} {\rm 
Let $\p \underline S_{\,k}$ denote the frontier of $\underline S_{\,k}$ in $ int(\B^n)$.  
Fix also an interior collar neighborhood 
$W_k$ of $\p \underline S_{\,k}$ in $ \underline S_{\,k}$
and let $E_k$ denote the part of $N_k$ over $W_k$. Without loss of generality
we may assume $E_k\subset \B^n$.
 Let $\mu:W_k\to [0,1]$ be a smooth function equal to 1 near 
$\p \underline S_{\,k}$ and 0 near the opposite face of  $W_k$. 
 This $\mu$ is lifted to  $E_k$ as a constant function in each fiber $N_{k,x}$. The lifted $\mu$ is still noted $\mu$ and 
called a {\it balancing function}.

The {\it balanced reducing process} consists of replacing the constant vector field $u$ on $E_k$
by the vector field  
\begin{equation}
u_{\mu}^k(x):= \mu(x)u_{\frak h}^k(x) +u_{\frak v}^k(x).
\end{equation}
It is constant in each fiber $E_{k,x}$. Note that $u_\mu^k$ is equal to $u$
 in the part of $N_k$ over
 a small neighborhood of $\p \underline S_{\,k}$. 
  Such a vector field also reads 
  $$u_\mu^k= \mu \,u+ (1-\mu)u_{\frak v}^k.
  $$ 
  This vector field is termed the {\it balanced  reduction} of $u$.
  }
\end{rien}
 \begin{center}
\begin{figure}[h]
\includegraphics[scale=.6]{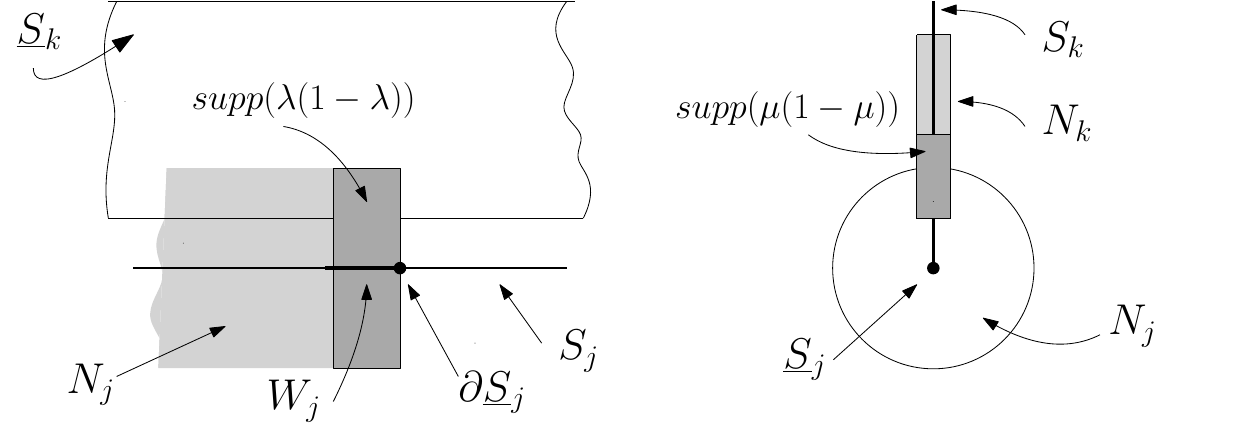}
 \caption{Two sectional views 
 of the tubes $N_j$ and $N_k$ in $\B^n$.}
 \label{assoc}
\end{figure}
\end{center}

\begin{rien}\label{assox}{\bf Skew associativity formula.} 
{\rm For $j<k$, let $S_j$ and $\underline S_{\,j}$ be a $j$-dimensional stratum 
 of the cone $C\subset \B^n$ and its compact subdomain; and let $\la: \underline S_{\,j}\to[0,1]$ be a balancing function
 for $S_j$.
 The position of $S_{\,j}$ with respect to $S_{\,k}$
 is specified in subsection \ref{tubes} (see Figure \ref{assoc}). If $N_{j,x}$ is a fiber of $N_j$, 
 with $x\in \underline S_{\,j}$,
   and $y$ is a point in
 $N_{j,x}\cap W_k$, 
 the fiber $N_{k,y}$ is an affine subspace of $N_{j,x}$. Then, 
 the reducing process with respect to stratum $S_k\cap N_{j,x}$ may be applied to
  the translation vector  $u^j_\frak v$ in the $(n-j)$-ball $N_{j,x}$. One gets the next fomula along 
the   fiber $N_{k,y}\subset N_{j,x}$:
 \begin{equation}\label{asso-formula}
u^k_\frak v= \left(u^j_\frak v\right)^k_\frak v \quad\text{and}\quad u^k_\frak h= u^j_\frak h+
\left( u^j_\frak h\right)^k_\frak h
\text{ that is,}
 \end{equation}  
$$u^k_\mu= \mu\la \, u^j_\frak h +\mu\left(u^j_\frak  h\right)^k_\frak h + \left(u^j_\frak v\right)^k_\frak v.
$$
Note these formulas hold regardless of the functions $\la$ and $\mu$. 
The subscript $\frak h$ has two different meanings: one stands for parallelism to $T_xS_j$ and the second one for parallelism in $N_{j,x}$ to 
$T_y\bigl(S_k\cap N_{j,x}\bigr)$.

There are analogous formulas associated with a sequence of strata $S_{j_1}, S_{j_2}, ..., S_{j_r}$ when each 
is in the closure of the next one.
}
 \end{rien}

 \begin{prop} \label{balanced}  In the setting of Corollary \ref{trans-cones} of a stratified cone $C\in \B^n$,
 it is assumed that the conic transverse structure
 to each stratum has a global trivialization.\footnote{ This condition is fulfilled in the case of \emph{simple} Morse-Smale 
 gradients of a Morse function (see Definition \ref{simple}.)} 
  If $u$ generates a flow 
 of immediate transversality to $C$ then we have:
 \begin{enumerate} 
 \item  The reduction $u^k_\frak v$ of $u$ to $N_k$ generates a flow of immediate transversality to $C\cap N_k$.
 \item The flow generated by the balanced reduction of $u$ to $N_k$
 is of immediate transversality to $C\cap E_k$.
 \end{enumerate}
   \end{prop}
 
 \proof The matter deals with bi-1-jets of $C$ (or pairs of tangent planes to $C$.) This allows one to linearize
 the considered vector field at any desired point without changing the problem.

On the linear disc bundle $N_k$ we have two \emph{linear} connections $h_0$ and $h_1$ (seen as plane distributions
complementary to the fibers): $h_0$ is parallel to $T_xS_k$ along the fiber $N_{k,x}$  for every $x\in S_k$; and $h_1$ 
is given by the assumed global 
 trivialization of $N_k$. The \emph{difference} between them, seen as a vertical deviation, is measured 
by a 1-form $\om$ on $S_k$ valued in the vector space of
linear endomorphisms of the vector bundle spanned by $N_k$.

By assumption, the vector $u$ generates a translation flow of immediate transversality to $C$.
Let  $\al$ be the minimum angle  between $T_y C$ and a hyperplane 
 containing $T_{y+tu}C$ for every $y\in C$ and small positive $t$.
  The lowest bound of this angle is positive by assumption on $u$; it is independent
 of $t$ since $C$ is a cone  and it is a minimum since $C$ has a  compact base. \\

\nd {\sc Claim.} {\it If $h_0=h_1$, then the statement holds.}
 
 Indeed, by the above assumption the distribution  $h_0$ is tangent to $C$.  
 Then, transversality to $C$
 translates to the vertical component of the flow of $u$.
 By an order-one Taylor expansion at $y\in C$, 
an ``infinitesimal contact'', 
namely coplanarity of  $D_y(u^k_\frak v)(T_yC)$ to $T_yC$,
 implies at most transversality to $C$  with an arbitrarily small angle for  some small $t>0$, 
 contradicting $\al>0$. This proves (1) in this setting.
  If (2) fails, it should fail infinitesimally 
  which is impossible by (1).\bull
  
  Let $y\in  N_{k,x}$ and let $y+tu^k_\frak v (x)$ be the vertically displaced point for a small time $t$; 
  suppose both points belong to $C$.
  The planes $h_1(y)$ and $h_1(y+tu^k_\frak v (x))$ are both tangent to $C$ but could be not parallel anymore.
  Nevertheless, thanks to the 1-form $\om$ which measures the ``difference $h_1-h_0$'', one computes
  that the angle between $h_1(y)$ and $h_1(y+tu^k_\frak v (x))$, the latter being translated to $y$, is a $O(t)$.
  Therefore, if $t>0$ is sufficiently small, this angle is negligible with respect to $\al$. So, the reasoning
  for the claim still holds.
  
   \bull

\section{Basics on homotopical algebras}\label{appendix-Astr}

In this appendix we review the basic terminology and result of the theory of $\A$-algebras. We refer the reader to 
Kenji  Lef\`evre-Hasegawa's thesis \cite{LefHas} for a comprehensive treatment. However, here we use the sign 
convention introduced \cite{keller}.  

Here, $\kk$ is a unitary ring. 

\begin{defn}
An $A_p$-algebra is a $\kk$-module equipped with a collection of $\kk$-module maps 
$m_i:A^{\otimes i} \to A$, $1 \leq i\leq p$ , of degree $2-i$  satisfying  the identities
\begin{equation} \label{defAinf}
\sum_{j+k+l=i} (-1)^{j+kl}m_{j+l+1}(1^{\otimes j} \otimes m_k\otimes 1^{\otimes l})=0
\end{equation}
for all $p \geq i\geq 1$.  
\end{defn}

Similarly an $\A$-algebra is a graded $\kk$-module $A$ together with a collection of $\kk$-module maps $m_i:A^{\otimes i} \to A$, $i\geq 1$ , of degree $2-i$  such for all $p$ , $(A, \{m_i\}_{1\leq i\leq p}) $ is an $A_p$-algebra.

\begin{remarque}{\rm
According to the sign convention in  \cite{LefHas} one should put $(-1)^{jk+l}$ instead of $(-1)^{j+kl}$. It turns out that 
these two definitions are equivalent.  Indeed  if $(m_1, m_2,\cdots )$ is an $\A$-structure according to the sign 
convention of  \cite{LefHas}, then $(m_1, (-1)^{2\choose 2 }m_2,\cdots  (-1)^{i\choose 2 }m_i,\cdots )$ is an 
$\A$-structure by our sign convention. The sign conventions in \cite{LefHas} is justified  by the cobar construction.
  The signs in \cite{keller} correspond to that of the opposite algebra in \cite{LefHas}.}

\end{remarque}

Let $(A, \{m_i\}_{1\leq i\leq p}) $ and $(A', \{m'_i\}_{1\leq i\leq p}) $ be two $A_p$-algebras. An $A_p$-morphism 
 from  $(A, \{m_i\}_{1\leq i\leq p}) $ to $(A', \{m'_i\}_{1\leq i\leq p}) $ consists of a collection of maps 
 $ f_i: A^{\otimes i}\to  A'$, $1 \leq  i\leq p$, with the $|f_i|=1-i$ satisfying the conditions

\begin{equation}\label{condition—Amorphism}
 \sum_{j+k+l=i}  (-1)^{{j+kl} }f_{j+l+1}\ (1^{\otimes j} \otimes m_k\otimes  1^{\otimes l})=\sum_{k=1}^i \sum_{i_1+\cdots +i_k=i} (-1)^{\epsilon_{i_1, \cdots i_k}}m'_k( f_{i_1} \otimes \cdots \otimes   f_{i_k})\\
 \end{equation}
 
\nd where   {$\epsilon_{i_1, \cdots i_k} = \sum_{j=1}^k (k-j)(i_j-1) $.}

\begin{remarque}{\rm
	If we follow the sign convention of \cite{LefHas}, then equation of \ref{condition—Amorphism}  transforms into 
\begin{equation}\label{condition—Amorphism-bis}
 \sum_{j+k+l=i}  (-1)^{l+jk} f_{j+l+1}\ (1^{\otimes j} \otimes m_k\otimes  1^{\otimes l})=\sum_{k=1}^i \sum_{i_1+\cdots +i_k=i} (-1)^{\epsilon_{i_1, \cdots i_k}}m'_k( f_{i_1} \otimes \cdots \otimes   f_{i_k})\\
 \end{equation}
 
 \nd where  $\epsilon_{i_1, \cdots i_k} = \sum_{j} ((1-i_j) \sum_{1\leq  k\leq j} i_ k) $.}
\end{remarque}
If $(m_i)$ and $(f_i)$ satisfy the equation (\ref{condition—Amorphism}), then $(m_1, (-1)^{2\choose 2 }m_2,\cdots (-1)^{i\choose 2 }m_i,\cdots )$ and $$(f_1, (-1)^{2\choose 2 }f_2,\cdots  (-1)^{i\choose 2 }f_i,\cdots)$$ satisfies (\ref{condition—Amorphism-bis}).

A collection of $\kk$-module maps $f=\{f_i\}_{i\geq 1}: A^{\otimes i}  \to A'$ is said to be a morphism of $\A$-algebras if for all $p$, $\{f_i\}_{ 1\leq i\leq p}$ is a morphism of  $A_p$-algebras.

An $\A$-morphism
  $f=\{f_i\}_{i\geq 1}$  is said to be a quasi-isomorphism if the cochain complex map $f_1$ is a quasi-isomorphism.

\begin{defn} Let $A$ and $A'$ be two $A_\infty$-algebras with  the corresponding differentials  $D$ and $D'$ on the  bar constructions $BA$ and $BA'$.
Suppose that $f=\{f_i\}, g=\{ g_i\}: A\to A'$ are two $A_\infty$-morphisms and $F$ and $G$ are the coalgebra morphisms corresponding to $f$ and $g$.
Then  a homotopy between $f$ and $g$ is a $(F,G)$-coderivation $H: BA\to BA'$ such that 
\begin{equation} \label{homo-A}
F-G= D'H -HD.
\end{equation}
\end{defn}

\begin{thm} {\rm (Prout\'e \cite{Prou}, see also \cite{LefHas}.)} We suppose that  $\kk$ is a field. Then we have:
\begin{enumerate}
\item For connected $A_\infty$-algebras, homotopy is  an equivalence relation (Theorem 4.27). 
\item A quasi-isomorphism of $A_\infty$-algebras is a homotopy equivalence (Theorem 4.24).
\end{enumerate}

\end{thm}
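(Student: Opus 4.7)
The plan is to transport the whole problem to the bar construction, where an $A_\infty$-algebra $A$ becomes a dg coalgebra $(BA,D)$, an $A_\infty$-morphism $f$ becomes a dg coalgebra morphism $F:BA\to BA'$, and an $A_\infty$-homotopy from $f$ to $g$ is exactly an $(F,G)$-coderivation $H$ with $F-G=D'H-HD$. In this language both claims become purely homological statements about dg coalgebras, and the various sign and Stasheff combinatorics that clutter the $m_n/f_n$ side can be avoided.

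For (1), reflexivity is immediate with $H=0$, and symmetry is formal: if $H$ realizes $F-G=D'H-HD$, then $-H$ is a $(G,F)$-coderivation realizing $G-F=D'(-H)-(-H)D$. Transitivity is the substantial step, because an $(F,G)$- and a $(G,K)$-coderivation do not live in the same space and so cannot be naively added. The standard device is to build an $A_\infty$-path object: an $A_\infty$-algebra $P(A')$ whose underlying complex is $A'\otimes\Omega^\bullet[0,1]$, together with $A_\infty$-evaluation morphisms $\mathrm{ev}_0,\mathrm{ev}_1:P(A')\to A'$ that are quasi-isomorphisms splitting the diagonal. A homotopy $F\simeq G$ is then the same as an $A_\infty$-morphism $\widetilde F:A\to P(A')$ with $\mathrm{ev}_i\widetilde F$ equal to $F,G$ respectively; concatenation of paths in $\Omega^\bullet[0,1]$ then furnishes transitivity. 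Connectedness of the $A_\infty$-algebras guarantees that the obstructions to lifting such concatenations vanish (all obstructions land in cohomology groups that are zero in the connected range).

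For (2), the strategy is obstruction-theoretic induction on the Taylor components, exploiting that $\kk$ is a field so every relevant short exact sequence of chain complexes splits. Given a quasi-isomorphism $f=\{f_i\}:A\to A'$, pick a chain-level homotopy inverse $g_1$ of $f_1$ together with contracting homotopies $h,h'$ witnessing $f_1g_1-1=m'_1h+hm'_1$ and $g_1f_1-1=m_1h'+h'm_1$. Construct $g_2,g_3,\ldots$ recursively: at stage $n$, having fixed $g_1,\ldots,g_{n-1}$ so that the $A_\infty$-morphism relations from Equation~(\ref{condition—Amorphism}) hold modulo terms of tensor length $\geq n$, the failure $O_n$ of the $n$-th morphism relation is a cocycle (a diagrammatic consequence of the $A_\infty$-relations on $A$ and $A'$ and the lower-stage relations satisfied by $f$ and $g$); solve $m'_1 g_n = O_n$ up to coboundary using $h$. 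The homotopies $gf\simeq 1_A$ and $fg\simeq 1_{A'}$ are constructed by the same inductive scheme, but this time producing $(GF,1)$- and $(FG,1)$-coderivations instead of morphisms, with $h'$ and $h$ respectively playing the role of contracting homotopy at each stage.

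The hard part is verifying that each obstruction class $O_n$ is indeed a cocycle, or equivalently that the naive addition in the transitivity step in (1) extends coherently to all higher components. Both come down to the same combinatorial identity: the bar differential squares to zero and both sides of the morphism/coderivation equation are invariant under it, so the inductive residues automatically satisfy $m'_1 O_n=0$. Keeping track of the signs in (\ref{condition—Amorphism}) through the induction is tedious but mechanical; the cleanest presentation packages everything via the cobar-bar adjunction and the cylinder/path objects above, exactly as in Lef\`evre-Hasegawa \cite{LefHas}.
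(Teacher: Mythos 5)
The paper does not prove this statement: it is quoted verbatim as an external result of Prout\'e (with precise theorem numbers from Lef\`evre-Hasegawa's thesis), and the authors use it as a black box in Section~\ref{homotopic-structures} to pass from the geometric cobordisms to homotopy of $\A$-structures. So there is no internal proof to compare your argument against.

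As an outline of the proof in the cited references, what you write is in the right spirit --- reformulating on the bar construction, reflexivity/symmetry by $H=0$ and $-H$, a path/cylinder object for transitivity, and a length-by-length obstruction argument for inverting a quasi-isomorphism. But two of the key hypotheses in the statement are exactly the ones you wave away, and they are where the real content lies. First, the connectedness hypothesis in (1) is not a harmless convergence device: without it, the homotopy relation on $A_\infty$-morphisms genuinely fails to be transitive (Lef\`evre-Hasegawa gives counterexamples), and the role of connectedness is to ensure that the word-length filtration on the bar coalgebra is complete and cocomplete in the range needed so that the concatenated coderivation, defined recursively, actually exists; saying ``obstructions land in groups that vanish in the connected range'' names the phenomenon without identifying which groups or why they vanish. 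Second, in (2), you must not merely check that each $O_n$ is a cocycle (which is indeed a formal consequence of $D^2=0$ on the bar side), you must show it is a coboundary so that $g_n$ can be chosen; this is where the field hypothesis (to split $f_1$ up to homotopy and to propagate the contracting homotopies $h,h'$ through the higher stages) is used, and where the construction of the two coderivations witnessing $gf\simeq \mathrm{id}$ and $fg\simeq\mathrm{id}$ must be carried out simultaneously, since the obstruction for one feeds the next stage of the other. Finally, if you wish to use $A'\otimes\Omega^{\bullet}[0,1]$ as a path object, you should note that $\Omega^{\bullet}[0,1]$ is infinite dimensional, so that the $A_\infty$-structure on the tensor product involves infinite sums that must be shown to converge --- this is again where a completeness/connectedness hypothesis intervenes, or else one should replace $\Omega^{\bullet}[0,1]$ with a finite-dimensional model. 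None of these points is fatal, but as written the argument is a roadmap rather than a proof, and it defers precisely the steps the theorem's hypotheses are there to handle.
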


 \vskip 1cm

\end{document}